\documentclass[11pt]{article}
\usepackage[T1]{fontenc}
\usepackage[utf8]{inputenc}
\usepackage{lmodern,microtype}
\usepackage[margin=1in]{geometry}
\usepackage{amsmath,amssymb,amsthm,mathtools,booktabs,array}
\usepackage{xurl,needspace}
\usepackage{tikz}
\usetikzlibrary{arrows.meta,positioning,calc,decorations.pathreplacing,patterns}
\usepackage[hidelinks,pagebackref]{hyperref}
\newcommand{\doi}[1]{\href{https://doi.org/#1}{doi:\nolinkurl{#1}}}
\renewcommand*{\backref}[1]{}
\renewcommand*{\backrefalt}[4]{\ifcase #1\or\space(Cited on p.~#2.)\else\space(Cited on pp.~#2.)\fi}
\hypersetup{pdftitle={Variance Bounds and Strict Log-Concavity for Forest Coronas},pdfauthor={Vadim E. Levit}}

\newtheorem{theorem}{Theorem}[section]
\newtheorem{lemma}[theorem]{Lemma}
\newtheorem{proposition}[theorem]{Proposition}
\newtheorem{corollary}[theorem]{Corollary}
\theoremstyle{definition}

\theoremstyle{remark}\newtheorem{remark}[theorem]{Remark}
\newcommand{\E}{\mathbb E}
\newcommand{\Var}{\operatorname{Var}}

\newcommand{\Prob}{\mathbb P}

\title{Variance Bounds and Strict Log-Concavity for Forest Coronas}
\author{Vadim E. Levit\\[2pt]
\small Department of Mathematics\\
\small Ariel University\\
\small Israel\\[2pt]
\small\texttt{levitv@ariel.ac.il}}
\date{October 27, 2026}
\begin{document}
\maketitle
\begin{abstract}
We prove that the size of a random independent set in every nonempty
forest has variance strictly smaller than its mean at common activity
$0<t\le1$. A rooted quadratic reserve gives quantitative forms of this
inequality. Mean matching transfers the bounds to fixed-weight canonical
completion laws, and exact binomial moments prove strict log-concavity
for every forest corona and every well-covered forest. The stronger
factorial normalization $((m-j)!i_j)_{j=0}^m$ is strictly log-concave;
we give explicit coefficient margins, terminal sharpness, one-less
smoothing, and the exact universal three-index binomial ULC prefix.
An exact attachment argument doubles the rooted reserve for every
nontrivial unweighted tree. For a single marked root, a square reserve holds through
$\Theta=\tfrac12e^{3-2W(1/e)}$ and for every positive mark at roots of
degree at most four. A stronger reserve holds through mark $15/14$.
These estimates yield marked clique, adjacent-pair, and repulsive
two-piece variance bounds with fixed-weight canonical transfer.
Finally, arbitrary positive site activities in $[3t/4,t]$ satisfy a
uniform rooted reserve on every forest. This proves strict log-concavity
for weighted forest coronas with compatible base marks. An explicit
weighted-star obstruction shows why an unrestricted heterogeneous
extension is impossible. Complete proofs are included, together with
the required classical graph characterization and variance comparison.
\end{abstract}
\noindent\textbf{Keywords:} independence polynomial; forest corona;
well-covered forest; hard-core model; variance; strict log-concavity.
\\\textbf{2020 Mathematics Subject Classification:} 05C31, 05C69, 05A20.

\section{Introduction}\label{sec:introduction}
For a finite simple graph $G$, write
\[
 I(G;x)=\sum_{j=0}^{\alpha(G)}i_j(G)x^j,
\]
where $i_j(G)$ counts independent sets of size $j$. The independence
polynomial was introduced by Gutman and Harary~\cite{GutmanHarary1983}.
A positive coefficient sequence is strictly log-concave if
$i_j^2>i_{j-1}i_{j+1}$ at every internal index. The hard-core law at
activity $t>0$ assigns an independent set $S$ probability
$t^{|S|}/I(G;t)$. We study the size $K=|S|$ through its mean and variance.

Our starting result is a forest-specific upper variance bound. If a
tree is rooted at $o$, let $q$ be the root occupation probability and
let $d$ be the difference between the conditional means of $K$ given
that $o$ is occupied and unoccupied. Lemma~\ref{lem:forest-subpoisson}
proves
\[
 \E_tK-\Var_tK\ge\frac{t}{1+t}\,q d^2\qquad(0<t\le1),
\]
with strict inequality for a tree having more than one vertex. The
response $d$ may have either sign. The proof retains a second reserve
after deleting the root; this is what allows induction through arbitrary
branching. Summing over components gives $\Var_tK<\E_tK$ for every
nonempty forest. Corollaries~\ref{cor:explicit-variance-reserve} and
\ref{cor:mean-sensitive-variance-reserve} quantify the remaining deficit.

The coefficient consequence is uniform in graph order. A graph is
well covered when all its maximal independent sets have the same size.
The leaf corona $H\circ K_1$ attaches one private leaf to each vertex
of $H$. Every well-covered forest is a disjoint union of such a corona
and isolated vertices; Section~\ref{sec:representation} includes the
classical characterization and its small cases.
\begin{theorem}[Every well-covered forest]\label{thm:allorder-wc}
Let $F$ be a well-covered forest and put $m=\alpha(F)$. At every
internal index,
\begin{equation}\label{eq:main-forest-ratio}
 \frac{i_j(F)^2}{i_{j-1}(F)i_{j+1}(F)}>
 \frac{m-j+1}{m-j}>1\qquad(1\le j<m).
\end{equation}
Consequently, the sequence $((m-j)!i_j(F))_{j=0}^m$ is strictly
log-concave. The conclusions hold after every common positive
coefficient tilt $i_j(F)\mapsto t^j i_j(F)$.
\end{theorem}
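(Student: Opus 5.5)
The plan is to reduce \eqref{eq:main-forest-ratio} to a sub-Poisson inequality for a fixed-weight completion law, and then obtain that inequality from Lemma~\ref{lem:forest-subpoisson}.

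\emph{Reductions.} The tilt statement is immediate: $i_j\mapsto t^ji_j$ multiplies $i_j^2$ and $i_{j-1}i_{j+1}$ by the same factor $t^{2j}$. The factorial statement is a restatement of \eqref{eq:main-forest-ratio}. Indeed, $((m-j)!)^2/((m-j+1)!(m-j-1)!)=(m-j)/(m-j+1)$, so strict log-concavity of $((m-j)!i_j)$ is exactly the first inequality in \eqref{eq:main-forest-ratio}. By the characterization in Section~\ref{sec:representation}, I will write $F=(H\circ K_1)\cup rK_1$ with $|V(H)|=n$ and $m=n+r$.

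\emph{Canonical completion.} Every independent set of $H\circ K_1$ is obtained uniquely as follows. Choose an independent set $S$ of $H$, put $S$ in, and then choose any subset of the $n-|S|$ leaves attached to vertices outside $S$. Adding the $r$ free isolated vertices, this gives
\[
 i_j(F)=\sum_{S}\binom{m-|S|}{j-|S|},
\]
where the sum runs over independent sets $S$ of $H$. Fix $j$ and let $\mu_j$ be the law on such $S$ with weight proportional to $\binom{m-|S|}{j-|S|}$. Put $K=|S|$ and $L=j-K$. The identities
\[
 \binom{m-k}{j+1-k}=\binom{m-k}{j-k}\frac{m-j}{L+1},\qquad
 \binom{m-k}{j-1-k}=\binom{m-k}{j-k}\frac{L}{m-j+1}
\]
give
\[
 \frac{i_{j-1}i_{j+1}}{i_j^2}=\frac{m-j}{m-j+1}\,\E_{\mu_j}[L]\,\E_{\mu_j}\Bigl[\frac1{L+1}\Bigr].
\]
Hence \eqref{eq:main-forest-ratio} is equivalent to
\[
 \E_{\mu_j}[L]\,\E_{\mu_j}\Bigl[\frac1{L+1}\Bigr]<1\qquad(1\le j<m).
\]
For a Poisson variable with mean $\lambda$ this product equals $1-e^{-\lambda}<1$, so the target inequality says that $L$ is no more dispersed than a Poisson variable in a reciprocal-moment sense.

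\emph{Transfer to the hard-core model.} The weight $\binom{m-k}{j-k}$, viewed as a function of $k$, is a log-concave reweighting of the counts $s_k(H)$. I would use mean matching: choose $t\in(0,1]$ so that the hard-core law on $H$ at activity $t$ has the same mean as $\mu_j$. The weight ratio $\binom{m-k-1}{j-k-1}/\binom{m-k}{j-k}=(j-k)/(m-k)\le1$ should keep the matched activity in $(0,1]$. Lemma~\ref{lem:forest-subpoisson}, together with its quantitative forms Corollaries~\ref{cor:explicit-variance-reserve} and \ref{cor:mean-sensitive-variance-reserve}, gives a strict variance deficit for the hard-core law of $H$. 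The step I need is that this deficit survives the passage to $\mu_j$, where the tilt is multiplied by the extra log-concave factor $(j-k)!^{-1}(m-k)!\,\cdot$const. For that I would use the classical variance comparison mentioned in the abstract: a log-concave reweighting can only decrease the variance at matched mean. The result would be $\Var_{\mu_j}(L)$ strictly less than a Poisson-type bound in terms of $\E_{\mu_j}L$.

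\emph{Closing step and main obstacle.} From this dispersion bound I would then derive $\E[L]\,\E[1/(L+1)]<1$ via exact binomial moments. Concretely, I would write $\E[1/(L+1)]$ through the factorial moments of $L$, and compare them with the Poisson values using the second-moment reserve together with $L\le j$. I expect this to be the main obstacle. A variance bound alone does not control $\E[1/(L+1)]$, and Jensen's inequality goes the wrong way. I would therefore try to show that the canonical completion law inherits the full rooted reserve inductively, not just a variance bound, so that the comparison holds at the level of generating functions. One route is to express $\E[1/(L+1)]=\int_0^1\E[u^L]\,du$ and prove that $\E[u^L]$ is dominated by the matched Poisson value $e^{-\lambda(1-u)}$ for $u\in[0,1]$. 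Strictness should come from the strict inequality in Lemma~\ref{lem:forest-subpoisson} for components with more than one vertex. It must also be checked separately in the degenerate cases where $H$ is edgeless or $j$ is extreme.
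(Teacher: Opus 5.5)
\textbf{The gap is in the closing step.} Your reduction is correct. The completion formula $i_j(F)=\sum_S\binom{m-|S|}{j-|S|}$ and the identity
\[
\frac{i_{j-1}i_{j+1}}{i_j^2}=\frac{m-j}{m-j+1}\,\E_{\mu_j}[L]\,\E_{\mu_j}\Bigl[\frac1{L+1}\Bigr]
\]
both hold, and the tilt and factorial reductions are fine. The trouble is that anchoring the completion law at the middle index $j$ leaves you with the reciprocal moment $\E_{\mu_j}[1/(L+1)]$. No variance bound controls this moment. For instance, $L\in\{0,10\}$ with $\Pr(L=10)=0.91$ has $\Var L=8.19<9.1=\E L$, yet $\E L\,\E[1/(L+1)]>3/2$. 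Your proposed remedies are much stronger statements that nothing in your plan, or in the lemmas you cite, proves. These are the generating-function domination $\E[u^L]\le e^{-\lambda(1-u)}$ and an inherited rooted reserve for the completion law. As written, the argument does not reach \eqref{eq:main-forest-ratio}.

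\textbf{The missing idea is a change of anchor.} Since $\binom{m-k}{j+1-k}=\binom{m-k}{j-k}\frac{m-j}{L+1}$, the law $\mu_{j+1}$ is $\mu_j$ reweighted by $1/(L+1)$. Put $L'=j+1-K=L+1$. Then
\[
\E_{\mu_j}[1/(L+1)]=1/\E_{\mu_{j+1}}L',\qquad \E_{\mu_j}L=\E_{\mu_{j+1}}[L'(L'-1)]/\E_{\mu_{j+1}}L'.
\]
Your target is therefore exactly
\[
 \E_{\mu_{j+1}}[L'(L'-1)]<\bigl(\E_{\mu_{j+1}}L'\bigr)^2,
 \quad\text{i.e.}\quad
 \Var_{\mu_{j+1}}K<(j+1)-\E_{\mu_{j+1}}K,
\]
which involves only second moments. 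This is the paper's route: its $\nu_r$ with $r=j+1$ and identity \eqref{eq:identity}.

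\textbf{A second ingredient you omit.} Your mean-matching step, with Lemma~\ref{lem:relative-lc-variance} and Lemma~\ref{lem:forest-subpoisson}, gives $\Var_{\mu_{j+1}}K<\E_{\mu_{j+1}}K$. That is a bound in terms of the mean of $K$, not of $L'$, contrary to what your transfer paragraph suggests. To close the argument you also need extension counting. From $(k+1)i_{k+1}(H)\le(n-k)i_k(H)$ and $n\le m$, the size law of $\mu_{j+1}$ satisfies $(k+1)\pi(k+1)\le(j+1-k)\pi(k)$. Summing gives $\E_{\mu_{j+1}}K\le(j+1)/2$. Then $\Var K<\E K\le j+1-\E K$, which is the required strict inequality. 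The case of empty $H$ is the binomial $(1+x)^m$ and is checked directly.
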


The conclusion includes disconnected forests and arbitrary isolated
vertices. Reversing the sequence gives strict ultra-log-concavity of
order infinity: with $A_q=i_{m-q}(F)$, the inequality is
$qA_q^2>(q+1)A_{q-1}A_{q+1}$. This differs from full binomial
ultra-log-concavity, whose curvature factor also contains $(j+1)/j$.
Theorem~\ref{thm:exact-ulc-prefix} determines the universal fixed indices
for that stronger normalization: precisely the internal indices
$1,2,3$. Every fixed index $j\ge4$ fails for infinitely many
well-covered trees.

The variance method also gives quantitative margins. For
$D_{m,j}=(m-(j+1)/2)^2+j+1$, Corollary~\ref{cor:improved-universal-margin}
gives
\[
 \frac{i_j(F)^2}{i_{j-1}(F)i_{j+1}(F)}>
 \frac{m-j+1}{m-j}\frac{D_{m,j}}{D_{m,j}-1}.
\]
A component-sensitive alternative appears in
Corollary~\ref{cor:component-coefficient-gap}. The leading terminal
factor is sharp at every fixed offset, by
Proposition~\ref{prop:terminal-factor-sharpness}. We do not assert
optimality of the finite-size corrections.

For the clique corona $H\circ K_p$, attach a private $p$-clique at
each base vertex and join that vertex to every vertex of the clique.
The same transfer gives the following result.
\begin{corollary}[Every forest clique corona]\label{cor:forest-clique-coronas}
Let $H$ be a nonempty forest of order $h$, and let $p\ge1$ be an
integer. For every $1\le j<h$,
\begin{equation}\label{eq:clique-ratio}
 \frac{i_j(H\circ K_p)^2}
      {i_{j-1}(H\circ K_p)i_{j+1}(H\circ K_p)}>
 \frac{h-j+1}{h-j}\frac{p^2(j+1)}{p^2j+1}.
\end{equation}
In particular, its independence polynomial is strictly log-concave.
\end{corollary}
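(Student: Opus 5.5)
The plan is to reduce \eqref{eq:clique-ratio} to a statement about the size of an independent set in the base forest $H$. Then the mean-matched variance bound of Lemma~\ref{lem:forest-subpoisson} can be applied, exactly as in the proof of Theorem~\ref{thm:allorder-wc}.

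\emph{Step 1: the coefficient formula.} An independent set of $H\circ K_p$ is an independent set $S$ of $H$, together with a choice, for each base vertex outside $S$, of at most one vertex of its private clique. Writing $a_k=i_k(H)$, this gives
\[
 i_j(H\circ K_p)=\sum_k a_k\,p^{j-k}\binom{h-k}{j-k}.
\]
For fixed $j$, introduce the \emph{canonical completion law} $\Prob_j(J=k)\propto a_k p^{-k}\binom{h-k}{j-k}$ on $\{0,\dots,j\}$. This is the conditional law of $J=|S\cap V(H)|$ given total size $K=j$.

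\emph{Step 2: expressing the curvature through $\Prob_j$.} Using
\[
 \binom{h-k}{j+1-k}=\binom{h-k}{j-k}\frac{h-j}{j+1-k},
 \qquad
 \binom{h-k}{j-1-k}=\binom{h-k}{j-k}\frac{j-k}{h-j+1},
\]
one obtains
\[
 \frac{i_{j+1}}{i_j}=p(h-j)\,\E_j\!\Bigl[\frac{1}{j+1-J}\Bigr],
 \qquad
 \frac{i_{j-1}}{i_j}=\frac{\E_j[j-J]}{p(h-j+1)}.
\]
Hence
\[
 \frac{i_j^2}{i_{j-1}i_{j+1}}
 =\frac{h-j+1}{h-j}\cdot
 \frac{1}{\E_j[j-J]\,\E_j[(j+1-J)^{-1}]}.
\]
So \eqref{eq:clique-ratio} is equivalent to
\[
 \E_j[j-J]\;\E_j\!\Bigl[\frac{1}{j+1-J}\Bigr]<\frac{p^2j+1}{p^2(j+1)}.
\]
Put $R=j+1-J\ge1$, with the sum restricted to $k\le j$. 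The target then reads
\[
 (\E_jR-1)\,\E_j[R^{-1}]<1-\frac{p^2-1}{p^2(j+1)}.
\]
Jensen alone gives $\E_j[R^{-1}]\ge1/\E_jR$, which points the wrong way. Instead I would bound $\E_j[R^{-1}]$ from above by a second-order expansion around $\mu=\E_jR$, of the form
\[
 \E_j[R^{-1}]\le\frac{1}{\mu}+\frac{\Var_j R}{\mu^2(\mu-c)}
\]
for a suitable constant $c$, or alternatively by a convex-ordering comparison. That reduces everything to the inequality $\Var_jJ<\E_j(\cdot)$, with a controlled constant.

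\emph{Step 3: variance input via mean matching.} The law $\Prob_j$ is a product of the tilted coefficients $a_kp^{-k}$ and a falling-factorial weight in $k$. The plan is to compare it, via the paper's mean-matching transfer, with the hard-core law of $J$ on $H$ at an activity $t$ chosen so that the means agree. At that activity, Lemma~\ref{lem:forest-subpoisson} gives $\Var_tJ<\E_tJ$, and the transfer carries the sub-Poisson bound over to the fixed-weight law $\Prob_j$. This yields an estimate $\Var_jR<\E_j[j-J]$ of the required strength. Substituting it into Step 2 and simplifying should produce exactly the factor $p^2(j+1)/(p^2j+1)$. The case $p=1$ recovers Theorem~\ref{thm:allorder-wc}, since $H\circ K_1$ is well covered, and this serves as a consistency check.

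\emph{Main obstacle.} I expect the hardest step to be the transfer in Step 3. One must check that the matched activity satisfies $0<t\le1$, because the extra factor $p^{-k}$ pushes $t$ downward, which helps. One must also check that the combination of the binomial completion weight with the forest law stays inside the class where the mean-matching comparison applies. The second delicate point is making the upper bound on $\E_j[R^{-1}]$ sharp enough to give precisely $(p^2j+1)/(p^2(j+1))$ rather than a weaker constant. Here I would first try the exact identity
\[
 \frac{1}{R}=\frac{1}{\mu}-\frac{R-\mu}{\mu^2}+\frac{(R-\mu)^2}{\mu^2R}
\]
together with $R\ge1$. Strict log-concavity then follows at once, since the right-hand side of \eqref{eq:clique-ratio} exceeds $1$ because $p^2(j+1)>p^2j+1$ for $p\ge1$.
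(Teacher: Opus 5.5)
You have a genuine gap: the Step 2 identity is wrong, and the middle-index route does not close even after it is corrected.

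\textbf{The missing term.} The coefficient $i_{j+1}(H\circ K_p)$ contains the term $k=j+1$, namely $i_{j+1}(H)$: base sets that use no clique vertex. At $k=j+1$ your rewriting $\binom{h-k}{j+1-k}=\binom{h-k}{j-k}\frac{h-j}{j+1-k}$ fails, since the left side is $1$ and the right side is $0$. So $\Prob_j$ never sees this term. The correct identity is
\[
 \frac{i_{j+1}(H\circ K_p)}{i_j(H\circ K_p)}
 =p(h-j)\,\E_j\bigl[R^{-1}\bigr]+\frac{i_{j+1}(H)}{i_j(H\circ K_p)}.
\]
Whenever $j+1\le\alpha(H)$, the true curvature ratio is therefore strictly smaller than your formula, and your ``equivalent'' target is only a necessary condition. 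For edgeless $H$ and $p=1$, your formula gives $\frac{h-j+1}{h-j}\frac{j+1}{j(1-2^{-j-1})}$ instead of the true $\frac{h-j+1}{h-j}\frac{j+1}{j}$.

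\textbf{The reciprocal moment.} Conditioning at the middle index puts $\E_j[(j+1-J)^{-1}]$ into the ratio. This quantity depends on the mass of $J$ near $j$, not on two moments. Your expansion with $R\ge1$ gives $\E_j[R^{-1}]\le\mu^{-1}+\Var_jR/\mu^2$. For $p=1$ this proves your target only if $(\mu-1)\Var_jR<\mu$. That already fails in the edgeless case, where $J\sim\mathrm{Bin}(j,1/2)$, as soon as $j\ge6$. The convex-order variant fails similarly, because the matched hard-core law charges sizes $k\ge j+1$, where $1/(j+1-k)$ is undefined.

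\textbf{The missing mean bound.} Mean matching actually yields $\Var_jJ\le\E_jJ$, not $\Var_jR<\E_j[j-J]$. Neither this bound nor the factor $p^2(j+1)/(p^2j+1)$ can be reached without an upper bound on the mean of $J$, and your plan never supplies one.

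\textbf{The fix: condition at the top index $r=j+1$.} Take $\nu(k)\propto i_k(H)p^{-k}\binom{h-k}{r-k}$. For every $k\le r$,
\[
 \binom{h-k}{r-1-k}=\binom{h-k}{r-k}\frac{r-k}{h-r+1},\qquad
 \binom{h-k}{r-2-k}=\binom{h-k}{r-k}\frac{(r-k)(r-k-1)}{(h-r+1)(h-r+2)}.
\]
The factor $r-k$ correctly kills $k=r$, so no term is lost. This gives the exact identity
\[
 \frac{i_j^2}{i_{j-1}i_{j+1}}=\frac{h-j+1}{h-j}\,\frac{m^2}{m^2-m+v},
 \qquad m=r-\E_\nu K,\quad v=\Var_\nu K,
\]
which involves only two moments. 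The argument then runs as follows.
\begin{enumerate}
\item Your Step 3 applies verbatim and gives $v\le\mu:=\E_\nu K$. The matched activity is at most $r/(ph)\le1$.
\item The extension count $(k+1)i_{k+1}(H)\le(h-k)i_k(H)$ gives $(k+1)\nu(k+1)\le p^{-1}(r-k)\nu(k)$. Summing yields $\mu\le m/p$, that is, $\mu\le r/(p+1)$.
\item The function $\psi(x)=(r-2x)/(r-x)^2$ is decreasing, so $(m-v)/m^2\ge\psi(\mu)\ge\psi(r/(p+1))=(1-p^{-2})/r$. This is exactly the factor $p^2(j+1)/(p^2j+1)$.
\item Strictness follows from strict extension counting at $k=1$ when $H$ has an edge. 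When $H$ is edgeless, it follows from the explicit binomial ratio $\frac{h-j+1}{h-j}\frac{j+1}{j}$.
\end{enumerate}
This is the paper's proof. After substituting $y=px$, the identity $I(H\circ K_p;x)=(1+px)^hI(H;x/(1+px))$ reduces the statement to Corollary~\ref{thm:linear-smoothing} with $M=h$ and $c=1/p$.
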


Theorem~\ref{thm:one-less-smoothing} further shows that
$(1+x)^{h-1}I(H;x/(1+x))$ is strictly log-concave whenever the forest
$H$ has an edge. This is a polynomial quotient statement; it does not
identify deletion of an arbitrary private leaf.

\paragraph{Fixed marks and boundary laws.}
A mark is a positive multiplier held fixed while the common activity
varies. Theorem~\ref{thm:marked-square-reserve} proves
$\E K-\Var K\ge q^2d^2$ when only one root is marked by
$0<\theta\le\Theta=\tfrac12\exp(3-2W(1/e))$, where
$W(z)e^{W(z)}=z$ and $W(z)\ge0$. At root degree at most four,
Corollary~\ref{cor:low-degree-mark} allows every positive mark.
Theorem~\ref{thm:strong-marked-reserve} retains the stronger ordinary
reserve through mark $15/14$. The marked results and the logarithmic
deletion floors supply reusable inputs for conditional assembly.
We prove clique assembly, the adjacent OR-event law, and repulsive
soft-edge coupling of two marked pieces. In particular, suppressing
the event that both designated roots are absent gives a strictly
sub-Poisson law on the stated fixed-mark interval.

Lemma~\ref{lem:capacity-transfer} transfers any such fixed-weight
variance bound to completion weights
$w(S)\binom{N-|S|}{r-|S|}$ for every $N\ge\alpha(G)$ and
$1\le r\le N$. All original weights remain fixed during mean
matching. The capacity assumption concerns the independence number;
an order bound $N\ge|V(G)|$ is not required for this probability lemma.

\paragraph{Compatible heterogeneous activities.}
Theorem~\ref{nuf:thm:main} extends the rooted induction to arbitrary
vertex activities in $[3t/4,t]$, $0<t\le1$. No restriction on
branching, the number of marked vertices, or graph order is imposed.
Fixed-mark canonical transfer follows, and
Corollary~\ref{nuf:cor:corona} proves full strict log-concavity for
weighted forest coronas when the base marks are at most one and their
largest-to-smallest ratio is at most $4/3$. The quantitative theorem
also gives a larger two-parameter sufficient region. The weighted-star
examples in Section~\ref{sec:heterogeneous-limit} show that bounding
every site activity by one alone is insufficient.

\paragraph{Relation to earlier work.}
Levit and Mandrescu~\cite[Theorems 2.4, 2.5, and 2.7]{LM2004}
proved log-concavity for several corona classes, including star and
path coronas; later work studies their roots and transforms
\cite{LMRoots2008,LMCorona2016}. Brown and Cameron~\cite[Theorem 2.4]{BC2018}
prove strict log-concavity after sufficiently many iterated leafy
extensions of each fixed graph. Here one extension suffices for
every forest base. Zhu~\cite{BZhuClique2016} proves clique-corona
log-concavity when the base polynomial is log-concave. The present
forest theorem does not assume that hypothesis: tree independence
polynomials need not be log-concave~\cite{KL26,GalvinFailures}.

The published general clique-corona bound of Hoang, Levit, Mandrescu,
and Pham~\cite[Corollary 3.10]{HLMP2026} applies when
$p\ge n^2/[4(n+1)]$, where $n$ is the base order. Our forest result
applies to every $p\ge1$. The unrestricted assertions in an earlier
version~\cite[Corollaries 3.10 and 3.15]{HLMP2024v1} are not used here.
The variance comparison is the finite-support case of Yu's
relative-log-concavity order~\cite[Theorem 12]{Yu2010}; its required
two-crossing proof is included. For a real-rooted independence
polynomial, the hard-core size is a sum of Bernoulli variables and
strict sub-Poisson variance is immediate. Forest bases need not be
real-rooted, and the present rooted reserves apply without that
assumption.

Hoang, Pham, and Trung~\cite[Section 7, pp.~2550343-16--2550343-20]{HPTCorona2026}
study $h$-polynomials of corona independence complexes. Their
Corollary 7.9 gives real-rootedness for claw-free bases, and their
Theorem 7.11 proves unimodality when the base $h$-polynomial has
nonnegative coefficients and the attachment size is at least its degree.
Those hypotheses do not hold for arbitrary forest bases. Their known
$h$-transform gives the application in
Subsection~\ref{subsec:corona-h-polynomials}; the rooted forest variance
reserve and quantitative independence-coefficient theorem proved here
are separate from that transform.

There is also a general-graph overlap at very small activity.
For a graph of order $n$, the proof of Davies, Sandhu, and Tan~\cite[Theorem 1.5]{DaviesSandhuTan2025}
gives $V\le a-na^2+(n-1)sa$, where $a=\E_tK/n$,
$V=\Var_tK/n$, and $s=t/(1+t)$. Using their bound
$a\ge t/(1+nt)$, we deduce, for $n\ge3$ and
$0<t<1/[n(n-2)]$,
\[
 \E_tK-\Var_tK\ge
 \frac{nt^2(1-n(n-2)t)}{(1+nt)^2(1+t)}>0.
\]
This order-dependent interval and unrooted reserve do not supply the
forest-wide rooted inequality on $(0,1]$ needed here.

\paragraph{Organization and scope.}
Sections~\ref{sec:representation}--\ref{sec:transfer} prove the
unweighted forest and coefficient results. Section~\ref{sec:marked-boundaries}
contains the single-root and two-piece fixed-mark bounds, and
Section~\ref{nuf:sec:forest-ratio} treats compatible heterogeneous
activities. Section~\ref{cld:sec:doubled-reserve} proves the doubled
reserve by three attachment lemmas and strong induction. The proofs
use no graph enumeration and no minimum order.
Path interaction certificates, graph-specific edge restorations, and
their coefficient comparisons are separate downstream applications.
Neither sub-Poisson variance at common activity nor the canonical
completion theorem alone asserts log-concavity for every forest.

\paragraph{Relation to the barrier study.}
The first public version of~\cite{LevitBL2026} proves eventual barrier
log-concavity and corresponding eventual corona consequences. The present
paper gives a self-contained rooted-variance and canonical-transfer route
to all-order well-covered-forest and forest-corona results. The expanded
barrier manuscript cites this paper for these developments; the shared
classical corona identities retain their original attribution. General
forest barrier log-concavity remains open.

\section{Coronas and well-covered forests}\label{sec:representation}
All graph deletions are induced. Write $\mu(G)$ for the maximum
matching size and $z(G)$ for the number of isolated vertices.
The leaf-corona identity and the well-covered-tree characterization
are classical~\cite[Theorem 2.3 and the paragraph preceding Theorem 2.6]{LM2004};
see also~\cite{FHN,BC2018}. Levit and Mandrescu~\cite{LM1999}
also characterized well-covered trees through well-covered spiders
and internal edge-joins. We give the representation needed here
in full.
\begin{figure}[tbp]
\centering
\begin{tikzpicture}[x=.9cm,y=.9cm,>=Latex,font=\small,
 old/.style={circle,fill=black,inner sep=1.9pt},
 leaf/.style={circle,draw,fill=white,inner sep=1.9pt}]
\node at (2,1.1) {$H=P_3\sqcup K_1$};
\foreach \i/\x in {a/0,b/1.4,c/2.8,d/4.2}{\node[old] (l\i) at (\x,0) {};}
\draw (la)--(lb)--(lc);
\draw[->,thick] (5,.05)--(6.5,.05) node[midway,above,align=center] {one leaf\\per vertex};
\node at (10.4,1.1) {$F=(H\circ K_1)\sqcup K_1$};
\foreach \i/\x in {a/7.3,b/8.7,c/10.1,d/11.5}{
 \node[old] (r\i) at (\x,0) {};
 \node[leaf] (p\i) at (\x,-.95) {};
 \draw (r\i)--(p\i);
}
\draw (ra)--(rb)--(rc);
\node[leaf] (iso) at (13.2,0) {};
\node[below,align=center] at (13.2,-.25) {isolated\\vertex};
\node[align=center] at (2,-1.5) {$|V(H)|=4$};
\node[align=center] at (10.2,-1.8) {$\mu(F)=4,\quad z(F)=1,\quad |V(F)|=9$};
\end{tikzpicture}
\caption{The well-covered-forest decomposition. Filled vertices are the
base vertices; open vertices are attached leaves or isolated components.
Here $z(F)$ counts isolated vertices and $\mu(F)$ is the maximum matching size.
In general, $F=(H\circ K_1)\sqcup sK_1$ has
$I(F;x)=(1+x)^{|V(H)|+s}I(H;x/(1+x))$ and
$|V(F)|-z(F)=2\mu(F)=2|V(H)|$. The all-order theorem applies to every such representation.}
\label{fig:corona-decomposition}
\end{figure}

\begin{lemma}[Classical well-covered-tree representation]\label{lem:wc-representation}
Every well-covered tree $T\ne K_1$ has the form $H\circ K_1$ for a tree
$H$, with $|V(T)|=2|V(H)|$. Conversely, every such corona is well-covered.
\end{lemma}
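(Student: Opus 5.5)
The converse is the easier half, so I would prove it first. Let $T=H\circ K_1$, and let $S$ be a maximal independent set of $T$. For each base vertex $v$ with private leaf $\ell_v$, the set $S$ contains at most one of $v$ and $\ell_v$, since they are adjacent. It also contains at least one of them: if $v\notin S$, then $\ell_v$ has no neighbour in $S$ (its only neighbour is $v$), so maximality forces $\ell_v\in S$. Hence $|S|=|V(H)|$ for every maximal $S$, and $T$ is well covered. Trivially $|V(T)|=2|V(H)|$, and $H$ is a tree because it is the induced subgraph of the tree $T$ obtained by deleting all leaves $\ell_v$, which keeps it connected.

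For the forward direction I would argue through matchings and leaves. Let $T\ne K_1$ be a well-covered tree. The first step is that every non-leaf vertex is adjacent to \emph{at most} one leaf. If a vertex $v$ had two leaf neighbours $a,b$, take a maximal independent set containing $v$, and another obtained from it by removing $v$ and adding $a,b$, then extending to maximality. Any extension only increases size, so the second set would be strictly larger than a maximal set containing $v$ chosen to coincide with it elsewhere. To make this clean I would fix a maximum independent set $S$. If $v\in S$, replacing $v$ by $\{a,b\}$ gives a larger independent set, a contradiction. So $v\notin S$, and then some maximal independent set $S'\ni v$ avoids $a,b$; replacing $v$ in $S'$ by $a,b$ keeps independence and increases size, so $S'$ is not maximum, contradicting well-coveredness.

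The second step is that every vertex is a leaf or adjacent to a leaf. This is the main obstacle, and I would try the standard argument. Suppose $v$ is neither a leaf nor adjacent to a leaf. Form an independent set $S_0$ by choosing, for each neighbour $u$ of $v$, a neighbour $w_u\ne v$; such a $w_u$ exists because $u$ is not a leaf. These $w_u$ are distinct and pairwise nonadjacent, because $T$ has no cycles. Extend $S_0$ to a maximal independent set $A$ avoiding $v$, and extend $(S_0\setminus\{\ldots\})$ suitably to compare sizes; cleaner, compare two maximal sets that agree off $N[v]\cup\{w_u\}$.

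I expect the cleanest route is actually via the tree's counting. Root-free, I would use $\alpha(T)\ge$ number of leaves-or-pendant-pairs and induct. Concretely, I would take a longest path $x_0x_1x_2\dots$. Then $x_1$ has exactly one leaf $x_0$ (by step one) and, by maximality of the path, all its other neighbours except $x_2$ are leaves, so $\deg x_1=2$. Now delete $\{x_0,x_1\}$. Every maximal independent set of $T'=T-\{x_0,x_1\}$ extends to one of $T$ by adding $x_0$, or by adding $x_1$ if $x_2$ is absent. Conversely, each maximal set of $T$ meets $\{x_0,x_1\}$ in exactly one vertex, and what remains is independent in $T'$, and maximal in $T'$ after a check at $x_2$. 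This shows that $T'$ is well covered with $\alpha(T')=\alpha(T)-1$. It is also a tree, and it is not $K_1$ unless $T=P_3$, which is not well covered, so that case is excluded separately. By induction $T'=H'\circ K_1$ (base case $T=K_2=K_1\circ K_1$). Then $x_1$ becomes the pendant-paired base vertex attached to $x_2$, giving $T=H\circ K_1$ with $H=H'+x_1x_2$. The delicate point, which I would check carefully, is the maximality transfer at $x_2$: one must show that $x_2$'s own leaf in $T'$ survives in its pendant-pair form, so that $x_2$ is a base vertex of $H'$.
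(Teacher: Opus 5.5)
Your converse and your step one (no vertex has two leaf neighbours, hence $\deg x_1=2$) are correct. So is the reduction to $T'=T-\{x_0,x_1\}$: the map $S'\mapsto S'\cup\{x_0\}$ sends every maximal independent set of $T'$ to a maximal one of $T$. Hence all maximal independent sets of $T'$ have size $\alpha(T)-1$, and the reverse transfer ``after a check at $x_2$'' is not needed. The abandoned ``step two'' paragraph is not a proof and should be dropped or completed.

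The genuine gap is the final assembly, which you flag but do not carry out. The induction hypothesis only gives $T'=H'\circ K_1$ for some tree $H'$, and nothing you have proved prevents $x_2$ from being one of the private leaves of $T'$. In that case $x_2\notin V(H')$, so $H=H'+x_1x_2$ is not defined. $T$ is then $H'\circ K_1$ with the path $x_1x_0$ hung on a private leaf, which is not visibly a corona. Well-coveredness of $T'$ alone cannot rule this out; you must use well-coveredness of $T$ once more.

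Here is the missing argument.
\begin{itemize}
\item If $|V(H')|=1$, then $T'=K_2$ and either vertex may serve as the base. Declare $x_2$ the base vertex, giving $T=P_4=P_2\circ K_1$.
\item If $|V(H')|\ge2$, suppose $x_2=\ell_w$ for a base vertex $w$, and pick an $H'$-neighbour $w'$ of $w$. The set $\{x_1,w'\}$ is independent in $T$, since the only neighbours of $x_1$ are $x_0$ and $x_2$. Extend it to a maximal independent set $S$ of $T$. Then $S$ avoids $x_0$, $x_2$ and $w$. It meets each of the remaining $|V(H')|-1$ pendant pairs $\{v,\ell_v\}$, $v\ne w$, in at most one vertex, so $|S|\le |V(H')|$. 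But $\alpha(T)=\alpha(T')+1=|V(H')|+1$, contradicting well-coveredness.
\end{itemize}
Hence $x_2\in V(H')$, and $H=H'+x_1+x_1x_2$ is a tree with $T=H\circ K_1$.

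With this added, your longest-path induction is a valid, Hall-free alternative to the paper's proof. The paper instead obtains a perfect matching from the two equal-size bipartition classes via Hall's theorem. It then shows every matching edge is pendant by the same kind of extension of a nonadjacent pair $\{u',v'\}$ that closes your gap.
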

\begin{proof}
The characterization is classical \cite{FHN,LM2004}; we give its tree
proof including the small cases. In a nontrivial tree, both bipartition
classes are maximal independent sets. Well-coveredness makes both have
size $\alpha(T)$. If they are $L,R$, then
$S\cup(R\setminus N(S))$ is independent for $S\subseteq L$, giving
$|N(S)|\ge|S|$. Hall's theorem supplies a perfect matching.
Every matching edge must be pendant: if $uv$ is not, choose neighbors
$u'\ne v$ of $u$ and $v'\ne u$ of $v$. Acyclicity makes $u',v'$
distinct and nonadjacent. A maximal independent set extending
$\{u',v'\}$ misses both $u,v$ and has at most one vertex in every other
matching edge. Its size is at most $\alpha(T)-1$, a contradiction.
For $T=K_2$, use $H=K_1$. Otherwise, every nonleaf has exactly one leaf
neighbor, and the induced subgraph of nonleaves is connected and acyclic.
It is $H$. Conversely, every maximal independent set in $H\circ K_1$
contains exactly one vertex from each old-vertex/private-leaf pair.
\end{proof}

\begin{lemma}[Forest representation and corona identity]
\label{lem:forest-corona-identity}
Every well-covered forest has the form
$F=(H\circ K_1)\sqcup sK_1$, where $H$ is a forest. If $h=|V(H)|$,
then $\alpha(F)=h+s$ and
\[
 I(F;x)=(1+x)^{h+s}I\!\left(H;\frac{x}{1+x}\right).
\]
\end{lemma}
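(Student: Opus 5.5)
The plan is to reduce to components and then verify the identity on each piece. A forest is well covered exactly when every component is well covered. The maximal independent sets of a disjoint union are the unions of maximal independent sets of the components. If some component had two maximal independent sets of different sizes, we could fix maximal sets in all the other components and obtain two maximal sets of the forest with different sizes. Conversely, if every component is well covered, every maximal set of the union has size $\sum\alpha$ of the components.

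So each component of $F$ is either $K_1$ or, by Lemma~\ref{lem:wc-representation}, a corona $H_i\circ K_1$ with $H_i$ a tree. Let $s$ be the number of isolated components and put $H=\bigsqcup_i H_i$. Since the corona is taken vertex by vertex, $\bigsqcup_i(H_i\circ K_1)=H\circ K_1$. This gives $F=(H\circ K_1)\sqcup sK_1$ with $H$ a forest, possibly empty.

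For the independence number, the $h$ vertex/private-leaf pairs partition $V(H\circ K_1)$, and each pair is an edge. Hence $\alpha(H\circ K_1)\le h$, and the set of private leaves attains $h$. Adding the $s$ isolated vertices gives $\alpha(F)=h+s$.

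For the polynomial identity, I will use multiplicativity, $I(G_1\sqcup G_2;x)=I(G_1;x)I(G_2;x)$, together with $I(K_1;x)=1+x$. It then suffices to prove
\[
I(H\circ K_1;x)=(1+x)^h I\!\left(H;\tfrac{x}{1+x}\right).
\]
I will classify each independent set $S$ of $H\circ K_1$ by $A=S\cap V(H)$, which is independent in $H$. Given $A$, the private leaves of vertices in $A$ are excluded. Each of the other $h-|A|$ leaves may be present or absent independently. Summing over these choices gives
\[
I(H\circ K_1;x)=\sum_{A}x^{|A|}(1+x)^{h-|A|}=(1+x)^h\sum_A\left(\tfrac{x}{1+x}\right)^{|A|},
\]
which is the desired identity. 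Combining this with the factor $(1+x)^s$ from the isolated vertices gives the stated formula.

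I expect no real obstacle. The only point needing care is the component reduction: the claim that well-coveredness passes to components, together with the degenerate cases $H$ empty and $F=K_1$.
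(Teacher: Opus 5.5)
Your proposal is correct and follows the paper's proof essentially step for step. You reduce well-coveredness to components, apply Lemma~\ref{lem:wc-representation} to the nontrivial components, and obtain the identity by conditioning on $S\cap V(H)$, with a factor $(1+x)$ for each isolated vertex. The only cosmetic difference is that you get $\alpha(F)=h+s$ from the edge partition together with the set of private leaves, whereas the paper notes that every maximal independent set meets each pendant pair once and contains every isolated vertex.
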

\begin{proof}
A maximal independent set in a disjoint union restricts to a maximal
independent set in each component, and arbitrary componentwise choices
combine to a maximal independent set. Hence a forest is well-covered
if and only if each component is well-covered. Apply
Lemma~\ref{lem:wc-representation} to each nontrivial component and
collect the isolated vertices. Every maximal independent set contains
one vertex from each pendant pair and every isolated vertex, so its
size is $h+s$. For any graph $H$ of order $h$, conditioning on its
selected independent set gives the classical identity
\cite[Theorem 2.3]{LM2004}
\begin{equation}\label{eq:corona-identity}
 I(H\circ K_1;x)=(1+x)^h I\!\left(H;\frac{x}{1+x}\right).
\end{equation}
The isolated vertices supply the factor $(1+x)^s$.
\end{proof}

\section{An analytic variance inequality for forests}\label{sec:variance}
The hard-core law at common activity $t>0$ gives each independent
set $J$ probability $t^{|J|}/I(H;t)$. We write $K=|J|$ and use
$\E_t$ and $\Var_t$ for its moments. A rooted quadratic reserve
controls signed conditional responses without imposing a sign on
the response of an individual vertex. The proof has three steps:
Lemma~\ref{lem:rooted-scalar-positivity} supplies a purely scalar
positivity estimate; the rooted induction in
Lemma~\ref{lem:forest-subpoisson} converts it into a variance reserve;
and Theorem~\ref{thm:general-transfer} converts the variance bound into
coefficient inequalities by mean matching. Lemma~\ref{lem:forest-corona-identity}
identifies the transform for well-covered forests, giving
Theorem~\ref{thm:allorder-wc}. We state the rooted conclusion first and give
the complete scalar argument immediately after its proof.

\begin{lemma}[A sub-Poisson bound for forests]
\label{lem:forest-subpoisson}
Let $H$ be a nonempty finite forest, and let $K$ be the size of its
hard-core independent set at common activity $0<t\le1$. Then
\[
 \Var_t K<\E_t K.
\]
More precisely, root a tree $T$, let $q$ be its root occupation
probability, and put
\[
 a=\frac{t}{1+t},\qquad
 d=\E_t[K\mid\text{root occupied}]
   -\E_t[K\mid\text{root unoccupied}].
\]
Then
\begin{equation}\label{eq:rooted-subpoisson-invariant}
 \E_t K-\Var_t K\ge aqd^2.
\end{equation}
Equality in \eqref{eq:rooted-subpoisson-invariant} holds for a
one-vertex tree; the inequality is strict for every larger tree.
\end{lemma}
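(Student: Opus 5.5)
The plan is to prove the rooted inequality \eqref{eq:rooted-subpoisson-invariant} by strong induction on the order of $T$. The forest statement then follows by summing over components, because $K$ is a sum of independent component sizes, so both $\E_tK$ and $\Var_tK$ add. For the one-vertex tree, $d=1$, $q=a$, $\E_tK=a$ and $\Var_tK=a(1-a)$. Hence $\E_tK-\Var_tK=a^2=aqd^2$, which is the equality case. The inductive hypothesis is used on the trees $T_i$ hanging from the children $c_1,\dots,c_k$ of the root $o$. It is also used on the forests $T_i-c_i$, whose components give a nonnegative reserve $R_i:=m^0_i-v^0_i\ge0$. Here $m^0_i,v^0_i$ are the mean and variance of the size on $T_i$ conditioned on $c_i$ unoccupied, which is exactly the hard-core law on $T_i-c_i$. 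This second reserve is the extra information carried after deleting a root.

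The first step is an exact decomposition. Given $o$ unoccupied, $K=\sum_i K_i$ with independent $K_i$ distributed as the hard-core law on $T_i$, with means $E_i$ and variances $V_i$. Given $o$ occupied, $K=1+\sum_i K_i^0$ with $K_i^0$ distributed as the law on $T_i-c_i$. Writing $q_i,d_i$ for the rooted data of $T_i$, the response is
\[
d=1+\sum_i(m^0_i-E_i)=1-\sum_i q_id_i .
\]
The law of total variance gives
\[
\E_tK-\Var_tK=q\Bigl(1+\sum_iR_i\Bigr)+(1-q)\sum_i(E_i-V_i)-q(1-q)d^2 .
\]
The odds satisfy $q/(1-q)=tP$, where $P=\prod_i(1-q_i)$. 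Inserting the hypothesis $E_i-V_i\ge aq_id_i^2$ and dividing by $1-q$, it suffices to prove the scalar inequality
\[
tP\Bigl(1+\sum_iR_i\Bigr)+a\sum_iq_id_i^2\;\ge\;tP\,d^2\bigl(1+a(1+tP)\bigr),\qquad d=1-\sum_iq_id_i .
\]
This is the content that Lemma~\ref{lem:rooted-scalar-positivity} is meant to supply.

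The main obstacle is this scalar step. The difficulty is that the $d_i$ may have either sign and the branching number $k$ is arbitrary. My first attempt is to expand $d^2\le(1+\theta)+(1+\theta^{-1})(\sum q_id_i)^2$ and bound $(\sum q_id_i)^2\le(\sum q_i)(\sum q_id_i^2)$ by Cauchy--Schwarz. The weight $\sum q_i$ would then be controlled against $P=\prod(1-q_i)$ using $q_i\le a\le\tfrac12$ (valid since $t\le1$), so that $tP\sum_i q_i$ stays bounded. The $d$-independent part $tP(1+a(1+tP))$ must then be absorbed by $tP(1+\sum R_i)$. This is where I expect $R_i$ to be needed, perhaps through an auxiliary relation bounding $R_i$ below in terms of $q_i$ and $d_i$; for a leaf child, for instance, $R_i=0$ but $d_i=1$, so the budget is tight. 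If a single global Cauchy--Schwarz loses too much, I would instead minimize the left side minus the right side as a quadratic in the vector $(q_id_i)$. That reduces the problem to a one-variable inequality in $s=\sum_i q_i$ with $P\le e^{-s}$, and it is plausibly the source of the Lambert-type constants later in the paper.

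Strictness for trees with at least two vertices would come from the scalar inequality being strict whenever $k\ge1$, since $P<1$ leaves slack in the $d$-free term. Alternatively it follows from strict inequality in the inductive hypothesis at a child tree of order at least $2$, with the leaf-child case checked directly.
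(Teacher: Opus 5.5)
Your induction, the total-variance decomposition and the response recursion $d=1-\sum_iq_id_i$ match the paper's. The scalar step, however, has a genuine gap. You use the hypothesis on $T_i-c_i$ only to get $R_i\ge0$, and that discards exactly the information the step needs.

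First, a normalization slip. Dividing $q(1-q)d^2+aqd^2$ by $1-q$ gives $(q+atP)\,d^2=tP\,d^2\bigl(\tfrac1{1+tP}+a\bigr)$, not $tP\,d^2(1+a(1+tP))$. Your version is stronger than needed. Even the correct inequality is false when $R_i$ and $d_i$ are treated as unrelated. Take all $R_i=0$ and all $d_i=0$. Then $u=0$ and $d=1$, and the inequality demands $tP\ge q+atP$, whereas
\[
q+atP-tP=tP\bigl(\tfrac1{1+tP}-\tfrac1{1+t}\bigr)>0 .
\]
So $P<1$ produces a deficit in the $d$-free term, not slack, and your strictness argument is wrong on this point. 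Neither the $\theta$-splitting with a global Cauchy--Schwarz bound nor a reduction to a one-variable inequality in $S$ can close this. A leaf child ($R_i=0$, $d_i=1$) must keep its exact response rather than enter through a bound on $d^2$.

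The missing idea is the logarithmic deletion reserve that couples $R_i$ to $d_i$. Apply the rooted hypothesis to each grandchild tree $T_{ij}$, i.e.\ each component of $T_i-c_i$, so that $R_i=\sum_jg_{T_{ij}}\ge a\sum_jq_{ij}d_{ij}^2$. The response recursion for $T_i$ gives $1-d_i=\sum_jq_{ij}d_{ij}$. The odds relation gives $L_i:=\log\frac{t(1-q_i)}{q_i}=\sum_j-\log(1-q_{ij})\ge\sum_jq_{ij}$. Cauchy--Schwarz then yields $R_i\ge a(1-d_i)^2/L_i$, with $d_i=1$ forced when $L_i=0$.

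Even with this bound, the scalar inequality (Lemma~\ref{lem:rooted-scalar-positivity}) takes real work. The paper completes squares in the $d_i$ with weights $A_i=a(sq_i+q/L_i)$ and $B_i=aq/L_i$, which reduces the problem to positivity of a determinant $F$. It proves $F>0$ using $cL\le S\le L$ with $c=a/\log(1+t)$, the envelope $q_iL_i\le t/e$, and a case split at $S=2-(1+t)/e$ with explicit constants. Strictness for nontrivial trees comes from $F>0$.
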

\begin{proof}
At a fixed common activity $0<t\le1$, the induction is asserted at
\emph{every} choice of root. Root deletion is a disjoint union of
smaller rooted trees; the logarithmic deletion reserve used below
is derived from those smaller cases. Thus the two reserves are
proved together by induction on the number of vertices.
Write $g_T=\E_tK-\Var_tK$. We prove the rooted assertion by induction
on the order of $T$. For a single vertex, $q=a$, $d=1$, and $g_T=a^2$.
For the induction step, let $T_i$ be the child trees, with root
quantities $q_i,d_i$, and write $g_i=g_{T_i}$ and
$g_i^-=g_{T_i-\mathrm{root}}$. Means and variances add over the
components of the latter forest. Set
\[
 s=1-q,\qquad S=\sum_iq_i,\qquad u=\sum_iq_id_i.
\]
The root odds and the conditional-mixture formula give
\begin{equation}\label{eq:subpoisson-mixture}
 \frac q{1-q}=t\prod_i(1-q_i),\qquad d=1-u,
 \qquad
 g_T=s\sum_i g_i+q\sum_i g_i^-+q-qs d^2.
\end{equation}
The added $q$ accounts for the occupied root in the second conditional
law.

Put
\[
 L_i=\log\frac{t(1-q_i)}{q_i}.
\]
If $q_{ij},d_{ij}$ are the root quantities in the child trees of $T_i$,
then
\[
 L_i=\sum_j-\log(1-q_{ij}),\qquad
 1-d_i=\sum_jq_{ij}d_{ij},\qquad \sum_jq_{ij}\le L_i.
\]
The induction hypothesis and Cauchy--Schwarz therefore give
\begin{equation}\label{eq:subpoisson-child-bounds}
 g_i\ge aq_id_i^2,
 \qquad
 g_i^-\ge a\sum_jq_{ij}d_{ij}^2
       \ge a\frac{(1-d_i)^2}{L_i}.
\end{equation}
When $L_i=0$, the child is a leaf and $d_i=1$ exactly. In every formula
involving $1/L_i$, this case imposes the constraint $d_i=1$; no division
by zero is required. Subtracting $aqd^2$ in
\eqref{eq:subpoisson-mixture}, it is enough to prove that
\begin{equation}\label{eq:subpoisson-quadratic}
 as\sum_iq_id_i^2+aq\sum_i\frac{(1-d_i)^2}{L_i}
       +q-q(s+a)(1-u)^2>0.
\end{equation}
Here the root has at least one child, so $S>0$.

Each child root satisfies $0<q_i\le a$ by its odds relation.
Lemma~\ref{lem:rooted-scalar-positivity} below therefore applies to
\eqref{eq:subpoisson-quadratic}, with the required constraint $d_i=1$
whenever $L_i=0$. This completes the induction.
Every nonempty tree has a strictly positive deficit: for a one-vertex
tree it is $a^2$, and for a larger tree it is strictly greater than
$aqd^2\ge0$.
Adding these deficits over the components proves the forest assertion.
\end{proof}

\begin{lemma}[Scalar positivity for rooted messages]
\label{lem:rooted-scalar-positivity}
Let $0<t\le1$, put $a=t/(1+t)$, and let $q_1,\ldots,q_k$ satisfy
$k\ge1$ and $0<q_i\le a$. Define $q\in(0,1)$, $s$, and $L_i$ by
\[
 \frac{q}{1-q}=t\prod_{i=1}^k(1-q_i),\qquad s=1-q,
 \qquad L_i=\log\frac{t(1-q_i)}{q_i}\ge0.
\]
Let $d_1,\ldots,d_k$ be arbitrary real numbers subject to $d_i=1$
when $L_i=0$, and put $u=\sum_iq_id_i$. Then
\[
 as\sum_iq_id_i^2+aq\sum_{i:L_i>0}\frac{(1-d_i)^2}{L_i}
       +q-q(s+a)(1-u)^2>0.
\]
In the proof, a term $(1-d_i)^2/L_i$ with $L_i=0$ is interpreted as
zero under the stated constraint; no division by zero is performed.
\end{lemma}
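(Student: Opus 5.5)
We treat the left-hand side as a quadratic form in $d=(d_1,\dots,d_k)$ and minimize it explicitly. The indices with $L_i=0$ have $d_i=1$ frozen, so they contribute the constants $asq_i$ to the first sum and $q_i$ to $u$. For every free index ($L_i>0$) set $A_i=asq_i$ and $B_i=aq/L_i$. Completing the square gives
\[
 A_id_i^2+B_i(1-d_i)^2=(A_i+B_i)(d_i-m_i)^2+\frac{A_iB_i}{A_i+B_i},
 \qquad m_i=\frac{B_i}{A_i+B_i}.
\]
We then fix the value of $u$. By Cauchy--Schwarz,
\[
 \sum_{\text{free}}(A_i+B_i)(d_i-m_i)^2\ \ge\ \frac{(u-M)^2}{C},
\]
where $M$ is $\sum q_im_i$ (over free $i$) plus the frozen part of $u$, and $C=\sum_{\text{free}}q_i^2/(A_i+B_i)$. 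This reduces the statement to a one-variable claim: the quadratic
\[
 \Phi(u)=\sum_{\text{free}}\frac{A_iB_i}{A_i+B_i}+as\!\!\sum_{L_i=0}\!q_i+\frac{(u-M)^2}{C}+q-q(s+a)(1-u)^2
\]
must be strictly positive for every real $u$.

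Positivity of $\Phi$ has two parts. First, the leading coefficient must be nonnegative, i.e.\ $1/C\ge q(s+a)$. Second, the minimum value must be positive, which is a discriminant-type inequality. If $1/C=q(s+a)$ exactly, $\Phi$ is affine, and we also need its linear coefficient to vanish and its constant to be positive.

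To verify both we use only the scalar relations available from the hypotheses:
\begin{itemize}
 \item the odds equation $q/(1-q)=t\prod(1-q_i)$;
 \item $0<q_i\le a\le\tfrac12$, using $t\le1$;
 \item the elementary bound $-\log(1-q_i)\ge q_i$, which yields $S=\sum q_i\le\log\bigl(ts/q\bigr)$ and hence ties $q$ to the $q_i$;
 \item a lower bound for each $L_i=\log\bigl(t(1-q_i)/q_i\bigr)$ in terms of $q_i$ alone.
\end{itemize}
Since $A_i+B_i\ge A_i=asq_i$, we get $C\le \sum q_i/(as)=S/(as)$. Thus a sufficient form of the leading-coefficient condition is $as\ge qS(s+a)$. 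This should follow from $q\le ts\,e^{-S}$ together with $Se^{-S}\le e^{-1}$, and the small margin it leaves is what must absorb the factor $(s+a)/a$.

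We expect the discriminant step to be the main obstacle. There the terms $A_iB_i/(A_i+B_i)$ and the isolated constant $q$ must jointly beat $q(s+a)(1-u_*)^2$ at the minimizer $u_*$. The constant $q$ is indispensable here: it is what handles the regime $u\approx0$, and the $L_i$-terms alone would not suffice.

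My first attempt will be the special choice $d_i\equiv1$, or more generally an explicit convex combination. This should show that the worst case is governed by how close $1-S$ is to $0$, and it suggests the substitution $x=S$ with all $q_i$ equal. We then argue that equal $q_i$ is extremal, by convexity of $q_i\mapsto q_i^2/(A_i+B_i)$ in the aggregate, and reduce to a two-variable inequality in $(S,t)$. That inequality is to be checked by monotonicity in $t\in(0,1]$, with $t=1$ as the extreme case.

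If this route leaves too thin a margin, the fallback is the weighted splitting $(x+y)^2\le(1+\lambda)x^2+(1+\lambda^{-1})y^2$ applied to $1-u=(1-S)+\sum q_i(1-d_i)$. Here the $x^2$ part is absorbed by $q$ and the $as\sum q_id_i^2$ term, and the $y^2$ part by $aq\sum(1-d_i)^2/L_i$ via Cauchy--Schwarz with weights $L_i$, choosing $\lambda$ as a function of $S$.

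Finally, strictness should come for free in either route. The constant $q>0$ survives in the final bound, and since $k\ge1$ we have $S>0$, which excludes the equality configuration.
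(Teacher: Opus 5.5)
\textbf{There is a genuine gap: the proposal reduces the lemma correctly but never proves the inequality that carries its content.}

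Your reduction coincides with the paper's. With your $A_i=asq_i$ and $B_i=aq/L_i$, the quantities $\sum_iq_im_i$, $C$ and $\sum_iA_iB_i/(A_i+B_i)$ are the paper's $m=\sum_iq_iq/(q+sq_iL_i)$, $V=(S-m)/(as)$ and $H=asm$. Everything then comes down to the determinant condition $F=(q+H)\bigl(1-q(s+a)V\bigr)-q(s+a)(1-m)^2>0$.

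The leading-coefficient check is unnecessary, since $F>0$ and $q+H>0$ already force $1-q(s+a)V>0$. In any case, your derivation of it does not close: $q\le tse^{-S}$ and $Se^{-S}\le e^{-1}$ alone would require $(s+a)(1+t)\le e$, which fails at $t=1$ once $s>(e-1)/2$.

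The determinant inequality is the whole content of the lemma, and your proposed routes to it do not work.
\begin{itemize}
\item Evaluating at $d_i\equiv1$ gives a value, not a lower bound.
\item Extremality of equal $q_i$ is asserted without argument. It is not even a two-variable reduction: $q$ is fixed by $L=\sum_i-\log(1-q_i)$ rather than by $S$, and every $A_i+B_i$ depends on all the $q_j$ through $q$. Since $k$ is free, equal messages still leave three parameters $(t,k,q_1)$, not $(S,t)$.
\item The fallback fails outright. For every $\lambda>0$ one has $(1+\lambda)x^2+(1+\lambda^{-1})y^2\ge(|x|+|y|)^2$. So when $S>1$ the split throws away the cancellation between $x=1-S<0$ and $y=\sum_iq_i(1-d_i)$. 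At $d=0$ the positive terms total $q+aq\sum_i1/L_i$, while the split demands at least $q(s+a)(2S-1)^2$.
\end{itemize}
A concrete failure of the fallback: take $t=1$ and seven messages $q_i=\omega/(1+\omega)$, where $\omega e^{\omega}=e^{-1}$. Then $L_i=1+\omega$, $S\approx1.525$ and $s+a\approx1.348$. About $3.74q$ is available against at least $5.66q$ required, although the true left-hand side at $d=0$ is about $2.39q>0$.

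\textbf{What is missing is the right direction of estimate on the messages.} Expanding gives $F=q(q-a)-q^2(s+a)S/(as)+mP$ with $P=as+q(s+a)\bigl(2+q/(as)-S\bigr)$. Once $P>0$ is checked, $F$ is affine and increasing in $m$, and $m$ decreases in each $L_i$.

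So you need an \emph{upper} bound, $q_iL_i\le q_i\log(t/q_i)\le t/e$, which gives $m\ge Sq/(q+st/e)$. The lower bound on $L_i$ in your list points the wrong way.

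The paper then combines this with:
\begin{itemize}
\item the exact odds identities $R:=q(s+a)/(as)=(1+t)(s+a)e^{-L}$ and $a-q=as(1-e^{-L})$;
\item the bounds $cL\le S\le L$, where $c=a/\log(1+t)$.
\end{itemize}
This reduces positivity of $F$ to that of $S-(1-e^{-L})(q+st/e)+RS(\delta-S)$, where $\delta=2-(1+t)/e$. Two cases finish the proof: $S\le\delta$, using $(1+1/e)\log2<1$, and $S>\delta$, using $\max_{S\ge\delta}(S-\delta)e^{-S}=e^{-\delta-1}$.

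Your proposal needs some argument of this kind, uniform in $k$ and in the spread of the $q_i$.
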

\begin{proof}
Put $S=\sum_iq_i>0$. Define
\[
 m=\sum_i\frac{q_iq}{q+s q_iL_i},\qquad
 V=\frac{S-m}{as},\qquad H=as m,
 \qquad C=s+a,\qquad b=qC.
\]
For $L_i>0$, put $A_i=a(sq_i+q/L_i)$ and $B_i=aq/L_i$.
Completing squares and applying weighted Cauchy--Schwarz show that
\begin{equation}\label{eq:subpoisson-square-bound}
 as\sum_iq_id_i^2+aq\sum_i\frac{(1-d_i)^2}{L_i}
 \ge H+\frac{(u-m)^2}{V}
 \quad\text{if }V>0.
\end{equation}
Indeed,
\[
 \sum_i\frac{q_iB_i}{A_i}=m,\qquad
 \sum_i\frac{q_i^2}{A_i}=V,\qquad
 \sum_i\left(B_i-\frac{B_i^2}{A_i}\right)=H.
\]
An index with $L_i=0$ contributes $q_i$ to $m$, zero to $V$, and $asq_i$ to $H$,
which are the finite limits of these expressions. If $V=0$, every
index has $L_i=0$, so $u=m=S$ is fixed, and the left-hand side of
\eqref{eq:subpoisson-square-bound} equals $H$.

It suffices to prove
\begin{equation}\label{eq:subpoisson-determinant}
 F:=(q+H)(1-bV)-b(1-m)^2>0.
\end{equation}
In fact, \eqref{eq:subpoisson-determinant} implies $1-bV>0$ because
$q+H>0$. When $V>0$, completing the remaining square gives
\[
 H+q+\frac{(u-m)^2}{V}-b(1-u)^2
 =\frac{F}{1-bV}
 +\frac{1-bV}{V}
   \left(u-\frac{m-bV}{1-bV}\right)^2.
\]
Thus its minimum over real $u$ is $F/(1-bV)>0$.
When $V=0$, its constrained value is $F>0$.
Direct expansion of \eqref{eq:subpoisson-determinant} gives
\begin{equation}\label{eq:subpoisson-expanded-F}
 F=q(q-a)-\frac{q^2CS}{as}
   +m\left[as+qC\left(2+\frac q{as}-S\right)\right].
\end{equation}

For the scalar estimates, put
\[
 L=\log\frac{t(1-q)}q=\sum_i-\log(1-q_i),\qquad
 c=\frac a{\log(1+t)},\qquad w=\frac te,\qquad
 A=as,\qquad R=\frac{qC}{A}.
\]
Here $e=\exp(1)$. Since $0<q_i\le a$ and
$-\log(1-x)/x$ is increasing,
\begin{equation}\label{eq:subpoisson-message-bounds}
 cL\le S\le L.
\end{equation}
Moreover,
\[
 q_iL_i
 =q_i\log\frac{t(1-q_i)}{q_i}
 \le q_i\log\frac t{q_i}\le\frac te=w,
\]
where the last bound follows by maximizing $x\log(t/x)$ over $x>0$.
Consequently,
\begin{equation}\label{eq:subpoisson-m-bound}
 m\ge\frac{Sq}{q+sw}.
\end{equation}
The defining odds relation also gives
\begin{equation}\label{eq:subpoisson-odds-bounds}
 R=(1+t)Ce^{-L}\le(1+2t)e^{-L},\qquad
 a-q=A(1-e^{-L}).
\end{equation}

The coefficient $P$ of $m$ in
\eqref{eq:subpoisson-expanded-F} is positive. Indeed,
\[
 \frac PA=1+R\left(2+\frac qA-S\right)\ge1+R(2-S).
\]
For $S\le2$, this is positive; for $S>2$, using $L\ge S$ and
$(S-2)e^{-S}\le e^{-3}$ gives
\[
 \frac PA\ge1-(1+2t)e^{-3}\ge1-3e^{-3}>0.
\]
We may therefore substitute \eqref{eq:subpoisson-m-bound} into
\eqref{eq:subpoisson-expanded-F}. Exact simplification yields
\begin{equation}\label{eq:subpoisson-G}
 F\ge\frac q{q+sw}G,\qquad
 G=(q-a)(q+sw)+S\left[A+qC\left(2-S-\frac wa\right)\right].
\end{equation}
Set $\beta=w/a=(1+t)/e$ and $\delta=2-\beta$. By
\eqref{eq:subpoisson-odds-bounds},
\begin{equation}\label{eq:subpoisson-G-normalized}
 \frac GA=S-(1-e^{-L})(q+sw)+RS(\delta-S).
\end{equation}
We use only the elementary estimates
\begin{equation}\label{eq:subpoisson-numerical-constants}
 e>\frac83,\qquad \log2<\frac7{10}.
\end{equation}
The first follows from the positive exponential series after its cubic
term. For the second,
$e^{7/10}>1+7/10+(7/10)^2/2+(7/10)^3/6>2$.
In particular, $\beta<3/4$ and $\delta>5/4$.

If $S\le\delta$, the final term in
\eqref{eq:subpoisson-G-normalized} is nonnegative. Since $w<1$ and
$q\le a$,
\[
 q+sw\le a+(1-a)w.
\]
Using $1-e^{-L}\le L\le S/c$ and
\eqref{eq:subpoisson-numerical-constants}, we obtain
\[
 \frac GA
 \ge S\left(1-\frac{a+(1-a)w}{c}\right)
 =S\left(1-\left(1+\frac1e\right)\log(1+t)\right)
 >S\left(1-\frac{77}{80}\right)>0.
\]
If $S>\delta$, then
\[
 \frac{(1-e^{-L})(q+sw)}S
 <\frac{a+(1-a)w}{\delta}
 <\frac{11/16}{5/4}=\frac{11}{20},
\]
because
$a+(1-a)w=t(1+1/e)/(1+t)\le(1+1/e)/2<11/16$.
Also, \eqref{eq:subpoisson-odds-bounds} and $L\ge S$ imply
\[
 R(S-\delta)
 \le3e^{-S}(S-\delta)
 \le3e^{-\delta-1}
 =3e^{\beta-3}
 <\frac3{e^2}<\frac{27}{64}.
\]
Here the maximum of $(S-\delta)e^{-S}$ for $S\ge\delta$ is
$e^{-\delta-1}$. Thus
\[
 \frac GA>S\left(1-\frac{11}{20}-\frac{27}{64}\right)
 =\frac9{320}S>0.
\]
In both cases, $G>0$, so \eqref{eq:subpoisson-G} proves
\eqref{eq:subpoisson-determinant}, and hence the asserted strict positivity.
\end{proof}

\begin{corollary}[An explicit variance reserve]\label{cor:explicit-variance-reserve}
Let $H$ be a nonempty forest of order $h$, and let $\ell$ be the
largest order of one of its components. At every common activity
$0<t\le1$, put $\mu=\E_tK$ and $V=\Var_tK$. Then
\begin{equation}\label{eq:explicit-variance-reserve}
 \mu-V>\frac{V^2}{h\ell}.
\end{equation}
\end{corollary}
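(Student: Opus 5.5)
The plan is to convert the rooted reserve of Lemma~\ref{lem:forest-subpoisson} into a bound in terms of the variance. The key point is that Lemma~\ref{lem:forest-subpoisson} holds at \emph{every} choice of root, so I would apply it at every vertex of a component and average. I then combine components by Cauchy--Schwarz. Write $g_T=\E_tK_T-\Var_tK_T$ for a component $T$ of order $n_T$. Since means and variances add over components, $\mu-V=\sum_T g_T$ and $V=\sum_T V_T$.

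\emph{Step 1: covariance representation.} For a vertex $v$ of $T$, let $q_v$ be its occupation probability and $d_v$ the rooted response from Lemma~\ref{lem:forest-subpoisson} with $T$ rooted at $v$. Then
\[
 \operatorname{Cov}(\mathbf 1_v,K)=q_v(1-q_v)d_v.
\]
This is the standard identity for the covariance of an indicator with $K$. Summing over $v\in T$ gives
\[
 V_T=\sum_{v\in T}q_v(1-q_v)d_v.
\]
The signs of the $d_v$ are irrelevant for what follows.

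\emph{Step 2: per-component bound.} By Cauchy--Schwarz,
\[
 V_T^2\le\Bigl(\sum_v q_v(1-q_v)^2\Bigr)\Bigl(\sum_v q_vd_v^2\Bigr).
\]
Lemma~\ref{lem:forest-subpoisson} at root $v$ gives $q_vd_v^2\le g_T/a$, so $\sum_v q_vd_v^2\le n_Tg_T/a$.

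Next, the odds relation $q_v/(1-q_v)=t\prod_i(1-q_i)\le t$ gives $q_v\le a$. Hence $q_v(1-q_v)^2\le a$, and so $\sum_v q_v(1-q_v)^2\le n_T a$. Combining,
\[
 V_T^2\le n_T^2 g_T.
\]
For strictness, first take $n_T\ge2$: the lemma is strict there, and $q_v>0$. If some $d_v\neq0$, the bound $q_vd_v^2<g_T/a$ is strict at that $v$, so the product bound is strict. If every $d_v=0$, then $V_T=0<g_T$. For a single vertex, directly $V_T^2=a^2(1-a)^2<a^2=g_T$. So in all cases $g_T>V_T^2/n_T^2$.

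\emph{Step 3: combine components.} Since $n_T\le\ell$, each $g_T>V_T^2/(n_T\ell)$. Engel's form of Cauchy--Schwarz then gives
\[
 \mu-V=\sum_T g_T>\frac1\ell\sum_T\frac{V_T^2}{n_T}\ge\frac{(\sum_TV_T)^2}{\ell\sum_Tn_T}=\frac{V^2}{h\ell}.
\]

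The only delicate point is Step 2: I need to choose the Cauchy--Schwarz weights so that the factor $a$ cancels. This works because $q_v(1-q_v)^2\le q_v\le a$, and that bound comes straight from the root odds relation. The remaining strictness bookkeeping is routine.
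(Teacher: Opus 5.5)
Your proof is correct. It uses the same ingredients as the paper's proof: the rooted reserve of Lemma~\ref{lem:forest-subpoisson} at every vertex, the covariance identity $V=\sum_v q_v(1-q_v)d_v$, Cauchy--Schwarz, and $q_v\le a$. The arrangement differs.

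The paper applies one weighted Cauchy--Schwarz over all vertices of $H$ with weights $q_v(1-q_v)$, which gives $V^2<\mu\sum_v q_vd_v^2$. It then uses $\sum_v q_vd_v^2\le a^{-1}\sum_C|V(C)|g_C\le \ell g/a$ together with $\mu\le ha$. Thus $h$ enters through the mean and $\ell$ through the summed reserves. Strictness follows at once from $q_v(1-q_v)<q_v$ and $V>0$, with no separate singleton check.

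You instead split the weights as $q_v(1-q_v)^2$ and $q_vd_v^2$ inside each component, obtaining $g_T>V_T^2/n_T^2$, and then combine components by Engel's inequality. This componentwise structure is the one the paper uses for Corollary~\ref{cor:mean-sensitive-variance-reserve}. There, Jensen applied to $x(1-x)^2$ sharpens your factor $n_T^2$ to $(n_T-\mu_T)^2$, where $\mu_T$ is the mean size on $T$.

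Your route also gives slightly more than you claim. Apply Engel directly to $\sum_T V_T^2/n_T^2$, without first weakening $n_T^2$ to $n_T\ell$. This gives $\mu-V>V^2/\sum_T n_T^2$, which implies the stated bound because $\sum_T n_T^2\le \ell h$.

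Your case split on whether some $d_v\ne0$ is harmless but unnecessary. For $n_T\ge2$ the lemma is strict at every root, so $\sum_v q_vd_v^2<n_Tg_T/a$ holds outright, and the first Cauchy--Schwarz factor is positive.
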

\begin{proof}
Put $a=t/(1+t)$ and $g=\mu-V$. For each component $C$, let
$g_C=\E_t|J\cap V(C)|-\Var_t|J\cap V(C)|$. For each vertex $v$,
write $q_v=\Prob_t(v\in J)$ and let $d_v$ be the difference between
the conditional means of $K$ given that $v$ is occupied and
unoccupied. Rooting $C$ at $v$ in Lemma~\ref{lem:forest-subpoisson}
gives $g_C\ge aq_vd_v^2$. Independence between components therefore
gives
\[
 \sum_vq_vd_v^2\le\frac1a\sum_C|V(C)|g_C
 \le\frac{\ell g}{a}.
\]
Since $K=\sum_v\mathbf1_{\{v\in J\}}$, the covariance identity is
$V=\sum_vq_v(1-q_v)d_v$. Weighted Cauchy--Schwarz yields
\[
 V^2\le
 \left(\sum_vq_v(1-q_v)\right)
 \left(\sum_vq_v(1-q_v)d_v^2\right)
 <\mu\sum_vq_vd_v^2
 \le\frac{\mu\ell g}{a}\le h\ell g.
\]
Here $V>0$ and all $q_v>0$, which justify the strict inequality.
The final bound uses $q_v\le a$, hence $\mu\le ha$. This occupation
bound follows because, conditional on every neighbor of $v$ being
unoccupied, its occupation probability is $a$, and otherwise it is zero.
\end{proof}

\begin{corollary}[A mean-sensitive variance reserve]
\label{cor:mean-sensitive-variance-reserve}
Let $H$ be a nonempty forest of order $h$, and let $K$ be its
hard-core independent-set size at common activity $0<t\le1$.
For each component $C$, put
$n_C=|V(C)|$ and $\mu_C=\E_t|J\cap V(C)|$, and write
$\mu=\E_tK$ and $V=\Var_tK$. Then
\begin{equation}\label{eq:mean-sensitive-variance-reserve}
 \mu-V\ge
 \frac{V^2}{\sum_C(n_C-\mu_C)^2}
 \ge\frac{V^2}{(h-\mu)^2}.
\end{equation}
The first inequality is an equality exactly when $H$ has no edges,
and the overall inequality between $\mu-V$ and $V^2/(h-\mu)^2$ is strict if $h>1$.
If $\ell=\max_C n_C$, then also
\[
 \mu-V\ge\frac{V^2}{\ell(h-\mu)}.
\]
\end{corollary}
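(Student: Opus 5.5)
The plan is to prove a per-component inequality $V_C^2\le(n_C-\mu_C)^2g_C$, where $g_C=\mu_C-V_C$ and $V_C=\Var_t|J\cap V(C)|$, and then combine components by Cauchy--Schwarz. Since components are independent, $\mu=\sum_C\mu_C$, $V=\sum_CV_C$ and $\mu-V=\sum_Cg_C$. Put $x_C=n_C-\mu_C$. Each $x_C>0$ because $\mu_C\le an_C<n_C$, using the occupation bound $q_v\le a$ from the proof of Corollary~\ref{cor:explicit-variance-reserve}. Cauchy--Schwarz gives
\[
 V^2=\Bigl(\sum_C x_C\cdot\frac{V_C}{x_C}\Bigr)^2\le\Bigl(\sum_Cx_C^2\Bigr)\sum_C\frac{V_C^2}{x_C^2}\le\Bigl(\sum_Cx_C^2\Bigr)\sum_Cg_C,
\]
which is the first inequality. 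The second follows from $\sum_Cx_C^2\le(\sum_Cx_C)^2=(h-\mu)^2$, since all $x_C>0$. The last claim follows from $x_C\le n_C\le\ell$, which gives $\sum_Cx_C^2\le\ell\sum_Cx_C=\ell(h-\mu)$.

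For the per-component bound, use the covariance identity from the proof of Corollary~\ref{cor:explicit-variance-reserve}, restricted to $C$: $V_C=\sum_{v\in C}q_v(1-q_v)d_v$, where $d_v$ is the response of $K_C=|J\cap V(C)|$, equivalently of $K$. Write each term as $(1-q_v)\cdot q_vd_v$ and apply Cauchy--Schwarz with weights $1-q_v$:
\[
 V_C^2\le\Bigl(\sum_{v\in C}(1-q_v)\Bigr)\Bigl(\sum_{v\in C}(1-q_v)q_v^2d_v^2\Bigr)=x_C\sum_{v\in C}(1-q_v)q_v^2d_v^2 .
\]
Next, root $C$ at $v$ in Lemma~\ref{lem:forest-subpoisson} and use $q_v\le a$. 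This gives $q_v^2d_v^2\le aq_vd_v^2\le g_C$ for every $v\in C$. Hence the last sum is at most $g_C\sum_{v\in C}(1-q_v)=x_Cg_C$, and so $V_C^2\le x_C^2g_C$.

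The main point needing care is the equality analysis; the inequality chain itself is routine.
\begin{itemize}
\item \emph{Edgeless forests.} Every component is a single vertex, with $g_C=a^2$, $V_C=a(1-a)$ and $x_C=1-a$. Then $V_C^2/x_C^2=g_C$. All ratios $V_C/x_C^2$ equal $a/(1-a)$, so the Cauchy--Schwarz step is also tight, and the first inequality is an equality.
\item \emph{Forests with an edge.} Some component $C$ has more than one vertex. Lemma~\ref{lem:forest-subpoisson} then gives the strict bound $g_C>aq_vd_v^2$ at every root $v$. Since every $1-q_v>0$, this gives $V_C^2<x_C^2g_C$. Because $x_C>0$, this strictness survives the final summation, so the first inequality is strict.
\item \emph{Strictness when $h>1$.} Either $H$ has an edge, so the first inequality is strict, or $H$ has at least two components, so $\sum_Cx_C^2<(\sum_Cx_C)^2$ is strict. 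In either case $\mu-V>V^2/(h-\mu)^2$.
\end{itemize}
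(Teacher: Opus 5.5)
Your proof is correct. Its architecture matches the paper's: a per-component bound $V_C^2\le(n_C-\mu_C)^2g_C$ from the covariance identity and the rooted reserve at every vertex, then Cauchy--Schwarz across independent components, the comparisons $\sum_Cx_C^2\le(h-\mu)^2$ and $\sum_Cx_C^2\le\ell(h-\mu)$, and the same equality and strictness analysis.

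The genuine difference is in the per-component step.
\begin{itemize}
\item The paper splits $q_v(1-q_v)d_v=\sqrt{q_v}(1-q_v)\cdot\sqrt{q_v}\,d_v$ and bounds $\sum_vq_vd_v^2\le n_Cg_C/a$ by summing the reserve over all roots. It then needs Jensen's inequality for the concave function $x(1-x)^2$ on $[0,1/2]$, together with $\mu_C\le n_Ca$, to reach $(n_C-\mu_C)^2g_C$.
\item You use the weights $1-q_v$, whose total is exactly $x_C$, and apply the reserve termwise as $q_v^2d_v^2\le aq_vd_v^2\le g_C$. This absorbs the occupation bound $q_v\le a$ vertex by vertex and removes the concavity argument entirely.
\end{itemize}
Your route also makes strictness for a component with an edge immediate, since every term in your weighted sum is then strict. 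The paper's chain does pass through the intermediate bound $V_C^2\le\frac{\mu_C}{n_Ca}(n_C-\mu_C)^2g_C$, which carries an extra factor at most one. The statement discards that factor, so your shorter argument loses nothing for the corollary as stated.
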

\begin{proof}
Put $a=t/(1+t)\le1/2$. For a component $C$, abbreviate
$n=n_C$, $\mu_C=\mu_0$, $V_C=\Var_t|J\cap V(C)|$, and
$g_C=\mu_0-V_C$. For each $v\in V(C)$, let $q_v$ be its occupation
probability and $d_v$ the difference of the conditional means of
$|J\cap V(C)|$ given that $v$ is occupied and unoccupied.
Lemma~\ref{lem:forest-subpoisson}, applied with each possible root,
gives
\[
 \sum_{v\in V(C)}q_vd_v^2\le\frac{ng_C}{a},
\]
strictly if $n>1$. The covariance identity and Cauchy--Schwarz give
\[
 V_C^2
 =\left(\sum_{v\in V(C)}q_v(1-q_v)d_v\right)^2
 \le\left(\sum_{v\in V(C)}q_v(1-q_v)^2\right)
      \left(\sum_{v\in V(C)}q_vd_v^2\right).
\]
We have $0<q_v\le a\le1/2$, and $x(1-x)^2$ is concave on
$[0,1/2]$, since its second derivative is $-4+6x<0$ there.
Jensen's inequality and $\mu_0\le na$ consequently yield
\[
 V_C^2
 \le \mu_0\left(1-\frac{\mu_0}{n}\right)^2\frac{ng_C}{a}
 \le(n-\mu_0)^2g_C.
\]
This inequality is strict if $n>1$; equality holds for $n=1$.
The components are independent, so another Cauchy--Schwarz inequality
gives
\[
 V=\sum_C V_C
 \le\sum_C(n_C-\mu_C)\sqrt{g_C}
 \le\sqrt{\left(\sum_C(n_C-\mu_C)^2\right)(\mu-V)}.
\]
This proves the first inequality in
\eqref{eq:mean-sensitive-variance-reserve}, including strictness when an edge is present. If $H$ has no edges,
all components are identical single-vertex laws, so both
Cauchy--Schwarz bounds are equalities. Since all $n_C-\mu_C$ are positive,
\[
 \sum_C(n_C-\mu_C)^2\le(h-\mu)^2,
\]
strictly when $H$ has more than one component. Thus the final bound
is strict for $h>1$. Finally,
\[
 \sum_C(n_C-\mu_C)^2
 \le\sum_C n_C(n_C-\mu_C)\le\ell(h-\mu),
\]
which proves the last assertion.
\end{proof}

\section{Quantitative transfer from variance to coefficients}\label{sec:transfer}
The following comparison is the finite-support variance case of the
classical relative-log-concavity order; see Yu
\cite[Theorem 12]{Yu2010} and the earlier two-crossing results cited there.
We include its short proof for completeness.

\begin{lemma}[A two-crossing variance comparison]
\label{lem:relative-lc-variance}
Let $p$ be a probability mass function that is positive on
$\{0,\ldots,d\}$, and let $\nu$ be another
probability law on this set. Suppose $\nu/p$ is log-concave with interval
support, and the two laws have the same mean. Then
$\Var_\nu K\le\Var_p K$.
\end{lemma}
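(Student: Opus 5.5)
The plan is the classical two-crossing argument. Write $r(k)=\nu(k)/p(k)$ for $k\in\{0,\ldots,d\}$, and let $I=\{k:r(k)>0\}$, which is an interval by hypothesis. On $I$ the function $\log r$ is concave. Let $\mu$ be the common mean and consider the function
\[
 \phi(k)=\nu(k)-p(k)=p(k)\bigl(r(k)-1\bigr).
\]
The goal is to show that
\[
 \Var_pK-\Var_\nu K=\sum_k (k-\mu)^2\bigl(p(k)-\nu(k)\bigr)\ge0,
\]
using only $\sum_k\phi(k)=0$, $\sum_k k\,\phi(k)=0$, and the sign pattern of $\phi$.

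\textbf{Step 1: sign pattern.} Since $p>0$ on the whole set, $\phi$ has the sign of $r-1$. Outside $I$ we have $r=0$, so $\phi<0$ there. On $I$, concavity of $\log r$ makes the superlevel set $\{\log r\ge0\}$ an interval. Hence the set where $r-1\ge 0$ is a sub-interval $[\ell,u]$, and $r-1<0$ elsewhere, including outside $I$. Thus $\phi$ has sign pattern $(-,+,-)$ along $\{0,\ldots,d\}$, with at most two sign changes. Degenerate cases, where $\phi$ has fewer sign changes, are handled the same way. If $\phi$ changed sign at most once, then $\sum_k\phi(k)=0$ and $\sum_k k\,\phi(k)=0$ would force $\phi\equiv0$. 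Indeed, a single crossing point $c$ would give $\sum_k(k-c)\phi(k)$ a strict sign unless $\phi=0$.

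\textbf{Step 2: quadratic test function.} Pick real crossing points $c_1\le c_2$ separating the three sign blocks: $\phi\le0$ for $k<c_1$ and for $k>c_2$, and $\phi\ge0$ for $c_1<k<c_2$ (taking, e.g., $c_1=\ell-\tfrac12$, $c_2=u+\tfrac12$). The quadratic $Q(k)=(k-c_1)(k-c_2)$ is $\ge0$ where $\phi\le0$ and $\le0$ where $\phi\ge0$. Therefore $Q(k)\phi(k)\le0$ for every $k$, and so
\[
 \sum_k Q(k)\phi(k)\le0 .
\]

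\textbf{Step 3: conclusion.} Write $(k-\mu)^2=Q(k)+\alpha k+\beta$ for suitable constants $\alpha,\beta$; this is possible because both are monic quadratics in $k$. Since $\phi$ annihilates $1$ and $k$, we get
\[
 \Var_\nu K-\Var_pK=\sum_k(k-\mu)^2\phi(k)=\sum_kQ(k)\phi(k)\le0 .
\]

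The only delicate point is Step 1. One must check that the support boundary of $\nu$ does not create extra sign changes; this holds because $\phi<0$ off $I$, which merges with the outer negative blocks. One must also justify that $\{r\ge1\}$ is an interval, which follows from concavity of $\log r$ on the interval $I$. The rest is routine.
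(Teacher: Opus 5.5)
Your proposal is correct and is essentially the paper's own two-crossing proof: you show that the set where $\nu-p\ge0$ is an interval by log-concavity of $\nu/p$ (the paper uses the set where $\nu-p>0$), you check that $(\nu-p)(k-c_1)(k-c_2)\le0$ pointwise, and then equal mass and equal mean cancel the affine part. Your choice $c_1=\ell-\tfrac12$, $c_2=u+\tfrac12$ already makes the pointwise inequality hold in every case, so your separate discussion of degenerate sign patterns is not actually needed.
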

\begin{proof}
The indices where $\nu(k)-p(k)>0$ form an interval. If the laws differ,
equal total mass and equal mean force negative differences on both sides
of this interval. Choose cutpoints $a\le b$ between the sign changes.
Then $(\nu(k)-p(k))(k-a)(k-b)\le0$ for every $k$. On summing, the
constant and linear terms vanish. The remaining inequality compares the
second moments, and hence the variances. Identical laws give equality.
\end{proof}

\subsection{Fixed-weight capacity transfer}

\begin{lemma}\label{lem:capacity-transfer}
Let $G$ be nonempty and let $w(S)>0$ be fixed weights on its independent
sets. Suppose $w(S)t^{|S|}$ gives $\Var|S|<\E|S|$ for every $0<t\le1$.
Then the same holds for weights
\[
 w(S)\binom{N-|S|}{r-|S|},\qquad
 N\ge\alpha(G),\quad1\le r\le N,
\]
where $N,r$ are integers.
\end{lemma}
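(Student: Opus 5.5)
The plan is to realize the completion law as a mean-matched comparison with a tilted law and then apply Lemma~\ref{lem:relative-lc-variance}. Write $W_k=\sum_{|S|=k}w(S)$ for $0\le k\le\alpha(G)=:\alpha$. The completion law on sizes is $\nu(k)\propto W_k\binom{N-k}{r-k}$, supported on $0\le k\le\min(r,\alpha)$. The tilted law is $p_t(k)\propto W_kt^k$, positive on $\{0,\dots,\alpha\}$ because $w>0$ and every subset of an independent set is independent. Their ratio is
\[
 \frac{\nu(k)}{p_t(k)}\propto t^{-k}\binom{N-k}{r-k},
\]
and $k\mapsto\binom{N-k}{r-k}=\frac{(N-k)!}{(r-k)!(N-r)!}$ is log-concave with interval support $\{0,\dots,r\}$. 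To see this, note that the successive ratio $\frac{r-k}{N-k}$ is nonincreasing in $k$ because $r\le N$. Multiplying by the geometric factor $t^{-k}$ preserves log-concavity. So for every $t$ the ratio $\nu/p_t$ is log-concave with interval support inside $\{0,\dots,\alpha\}$, as Lemma~\ref{lem:relative-lc-variance} requires.

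Next I match means. The map $t\mapsto\E_{p_t}K$ is continuous and strictly increasing on $(0,\infty)$, since its derivative is $\Var_{p_t}K/t>0$ for a nondegenerate law. I need the matching $t^*$ to lie in $(0,1]$, which means $\E_\nu K\le\E_{p_1}K$. I obtain this by a likelihood-ratio argument. The ratio $\nu(k)/p_1(k)\propto\binom{N-k}{r-k}$ is nonincreasing in $k$, because the successive ratio $(r-k)/(N-k)\le1$. Hence $\nu$ is stochastically dominated by $p_1$, and $\E_\nu K\le\E_{p_1}K$. For the other end, $\E_\nu K>0$ because $r\ge1$ and $\nu(1)>0$; the set $\{v\}$ is independent and $\binom{N-1}{r-1}>0$. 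Since $\E_{p_t}K\to0$ as $t\to0$, the intermediate value theorem gives $t^*\in(0,1]$ with $\E_{p_{t^*}}K=\E_\nu K$.

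Lemma~\ref{lem:relative-lc-variance} then yields $\Var_\nu K\le\Var_{p_{t^*}}K<\E_{p_{t^*}}K=\E_\nu K$, using the hypothesis at $t^*\le1$. The strict inequality survives even if the comparison step is an equality.

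The main obstacle is the endpoint case $\E_\nu K=\E_{p_1}K$, together with confirming that the support hypotheses hold in degenerate regimes such as $r<\alpha$, where $\nu$ has truncated support. Both are covered above: truncation only shrinks an interval support, and the matching at $t^*=1$ is still permitted by the hypothesis. Monotonicity in $t$ is the point I would state carefully, since it is what guarantees the matching parameter stays in the range where the assumed bound $\Var<\E$ is available.
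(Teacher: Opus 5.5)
Your proposal is correct and follows the paper's own route. You match the mean of the completion law to a tilted law by a likelihood-ratio comparison, and then apply the equal-mean two-crossing variance comparison. The only difference is that you bound the matching activity by $1$, comparing with $p_1$, whereas the paper obtains $t_*\le r/N$. Your bound suffices here because the hypothesis covers all of $(0,1]$, and it also absorbs the case $r=N$ without separate treatment.
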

\begin{proof}
Put $a_k=\sum_{|S|=k}w(S)>0$ for $0\le k\le\alpha(G)$.
At $r=N$ the multiplier is one. Otherwise, apart from a constant,
it is $(r)_k/(N)_k$ on $k\le r$, with consecutive ratios
$(r-k)/(N-k)$ decreasing and at most $r/N$. Thus the conditioned
mean is no larger than the mean under $a_k(r/N)^k$, by the monotone
likelihood ratio comparison. The latter mean is continuous, strictly
increasing from zero, with derivative $\Var_tK/t$. It matches the
positive conditioned mean at some $0<t_*\le r/N<1$.
Relative to the law at $t_*$, the conditioned size law has a
log-concave likelihood ratio and interval support. Equal mass and
mean imply a two-crossing comparison: choose cutpoints $b\le c$
so that $(\nu(k)-\pi(k))(k-b)(k-c)\le0$ for every $k$.
Summing cancels constant and linear terms and yields
$\Var_\nu K\le\Var_\pi K<\E_\pi K=\E_\nu K$.
No order hypothesis $N\ge|V(G)|$ is used.
\end{proof}

\subsection{Corona transfer and quantitative coefficient margins}

\begin{theorem}[A quantitative corona transfer]\label{thm:general-transfer}
Let $H$ be a nonempty finite simple graph of order $h$, let $M\ge h$ be an integer, and let $c>0$ satisfy $\rho=ch/M\le1$. Suppose
\begin{equation}\label{eq:assumption}
 \Var_t K\le\E_t K\qquad(0<t\le c)
\end{equation}
for the independent-set size law of $H$. Define
\[
 B_j=[x^j](1+x)^M I\!\left(H;\frac{cx}{1+x}\right).
\]
Then
\begin{equation}\label{eq:strongtransfer}
 \frac{B_j^2}{B_{j-1}B_{j+1}}>
 \frac{M-j+1}{M-j}\frac{j+1}{j+\rho^2}
 \qquad(1\le j<M).
\end{equation}
In particular, only the nonstrict variance hypothesis is required.
\end{theorem}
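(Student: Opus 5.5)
The plan is to write each $B_j$ as the normalizer of a completion law and to read the coefficient ratio as a product of two expectations under one law. Put $a_k=c^ki_k(H)$. Since $(1+x)^M\bigl(cx/(1+x)\bigr)^k=c^kx^k(1+x)^{M-k}$,
\[
 B_j=\sum_{k\le j}a_k\binom{M-k}{j-k}.
\]
For fixed $1\le j<M$, let $\nu=\nu_j$ be the law on $\{0,\ldots,\min(j,\alpha(H))\}$ proportional to $a_k\binom{M-k}{j-k}$. Using $\binom{M-k}{j+1-k}/\binom{M-k}{j-k}=(M-j)/(j+1-k)$ and $\binom{M-k}{j-1-k}/\binom{M-k}{j-k}=(j-k)/(M-j+1)$, we get
\[
 \frac{B_{j-1}B_{j+1}}{B_j^2}
 =\frac{M-j}{M-j+1}\,\E_\nu[j-K]\;\E_\nu\!\left[\frac1{j+1-K}\right].
\]
So it suffices to prove
\[
 \E_\nu[Y-1]\,\E_\nu[1/Y]<\frac{j+\rho^2}{j+1},\qquad Y=j+1-K\ge1 .
\]

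The probabilistic inputs come from the mean-matching argument of Lemma~\ref{lem:capacity-transfer}, run with weights $w(S)=c^{|S|}$, $N=M\ge h\ge\alpha(H)$, and $r=j$.

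First, the multiplier ratios $(j-k)/(M-j\cdot0-k)$, i.e.\ $(j-k)/(M-k)$, are decreasing and at most $j/M$. Hence the mean $\mu=\E_\nu K$ is at most the hard-core mean of $H$ at activity $cj/M\le c$. At activity $s$ every occupation probability is at most $s/(1+s)\le s$, so $\mu\le h\,cj/M=\rho j$.

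Second, the matched activity satisfies $t_*\le cj/M\le c$, so hypothesis \eqref{eq:assumption} applies at $t_*$. The two-crossing comparison of Lemma~\ref{lem:relative-lc-variance} then gives $\Var_\nu K\le\Var_{t_*}K\le\E_{t_*}K=\mu$. Only the nonstrict hypothesis is used here, and the strict inequality will come later from the convexity step.

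Next I control $\E[1/Y]$ by its mean and variance. With $m=\E Y=j+1-\mu\ge1$, the identity
\[
 \frac1y-\frac1m+\frac{y-m}{m^2}=\frac{(y-m)^2}{m^2y}\le\frac{(y-m)^2}{m^2}\qquad(y\ge1)
\]
gives $\E[1/Y]\le(m+\Var Y)/m^2\le(m+\mu)/m^2$. The middle inequality in the display is strict unless $Y$ is concentrated on $\{1\}$ or is constant $=m$. I will treat these degenerate cases directly; at worst they are excluded because $\nu$ charges both $k=0$ and $k=1$, which holds since $H$ is nonempty and $j\ge1$.

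This reduces everything to the scalar inequality
\[
 \frac{(j-\mu)(j+1)}{(j+1-\mu)^2}\le\frac{j+\rho^2}{j+1}\qquad(0\le\mu\le\rho j,\ 0<\rho\le1).
\]
Expanding, this is equivalent to
\[
 \mu(j^2-1)-j\mu^2\le\rho^2(j+1-\mu)^2 .
\]

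I expect this last calculus inequality to be the main obstacle. The left side is a concave quadratic in $\mu$, the right side is convex, and the constraint $\mu\le\rho j$ has to be used essentially.
\begin{itemize}
\item At $\mu=\rho j$ the difference is $\rho j\bigl[(j^2-1)-\rho j^2\bigr]-\rho^2(j+1-\rho j)^2$. I would factor this as a polynomial in $\rho$ and check its sign on $[0,1]$.
\item I would then argue monotonicity in $\mu$ on $[0,\rho j]$.
\end{itemize}
If the bound $\Var\le\mu$ turns out too weak for this, I would first try sharpening it to $\Var\le\mu-\mu^2/h$ or use $\E[Y-1]=j-\mu$ more carefully. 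The other fallback is to keep the exact term $(y-m)^2/(m^2y)$ instead of discarding the factor $1/y$.
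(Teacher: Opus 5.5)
\textbf{Two genuine gaps.} \emph{First, your coefficient identity is false, and the error has the wrong sign.} The law $\nu_j$ lives on $k\le j$. But $B_{j+1}=\sum_{k\le j+1}a_k\binom{M-k}{j+1-k}$ also contains the top term $a_{j+1}=c^{j+1}i_{j+1}(H)$, which is positive whenever $\alpha(H)\ge j+1$. Correctly,
\[
 \frac{B_{j+1}}{B_j}=(M-j)\,\E_\nu\!\left[\frac1{j+1-K}\right]+\frac{a_{j+1}}{B_j}.
\]
Your $B_{j-1}$ identity is fine, because lowering the index kills terms automatically. So your expression is strictly smaller than $B_{j-1}B_{j+1}/B_j^2$, and bounding it from above proves nothing. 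For example, $H=2K_1$, $M=3$, $c=1$ gives $(B_0,\dots,B_3)=(1,5,8,4)$. Then $B_0B_2/B_1^2=8/25$, while your formula gives $\frac23\cdot\frac35\cdot\frac7{10}=\frac7{25}$.

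\emph{Second, even with a correct identity, your reduction is too lossy.} Replacing $(y-m)^2/(m^2y)$ by $(y-m)^2/m^2$ costs a factor of order $m$. The scalar inequality you reduce to is false on your domain. Take $j=4$, $\rho=1$, $\mu=2$; this is admissible even under the sharper bound $\mu\le\rho j/(1+\rho)$. Then $\mu(j^2-1)-j\mu^2=14>9=\rho^2(j+1-\mu)^2$, i.e.\ $(j-\mu)(j+1)/(j+1-\mu)^2=10/9>1$. So the chain would not even yield log-concavity.

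\textbf{The missing idea: use the completion law at the top index $r=j+1$,} namely $\nu_{j+1}(k)\propto a_k\binom{M-k}{j+1-k}$. Then both $B_j$ and $B_{j-1}$ are obtained by lowering the index, which is exact:
\[
 \frac{B_j}{B_{j+1}}=\frac{\E[r-K]}{M-j},\qquad
 \frac{B_{j-1}}{B_{j+1}}=\frac{\E[(r-K)(r-K-1)]}{(M-j)(M-j+1)}.
\]
Hence $\frac{B_j^2}{B_{j-1}B_{j+1}}=\frac{M-j+1}{M-j}\frac{m^2}{m^2-m+v}$, with $m=r-\mu$ and $v=\Var_{\nu_{j+1}}K$. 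Only the mean and variance enter, and no harmonic moment is needed.

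Your mean-matching step then gives $v\le\mu$, since the matched activity is at most $c(j+1)/M\le c$.

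Your occupation-probability bound on $\mu$ must be replaced by extension counting, $(k+1)i_{k+1}(H)\le(h-k)i_k(H)$. With $M\ge h$ this gives $(k+1)\nu(k+1)\le\rho(r-k)\nu(k)$, hence $\mu\le\rho m$, i.e.\ $\mu\le r\rho/(1+\rho)$.

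The function $\psi(x)=(r-2x)/(r-x)^2$ is decreasing, so
\[
 \frac{m-v}{m^2}\ge\psi(\mu)\ge\psi\!\left(\frac{r\rho}{1+\rho}\right)=\frac{1-\rho^2}{r}.
\]
This is exactly $\frac{m^2}{m^2-m+v}\ge\frac{j+1}{j+\rho^2}$.

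Strictness does not come from a convexity step. It comes from strict extension counting at $k=1$ when $H$ has an edge or $M>h$. In the remaining edgeless case with $M=h$, one computes directly $B_j=(1+c)^j\binom Mj$.
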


\begin{proof}
Put $d=\alpha(H)$, $a_k=c^ki_k(H)$, and $b_k=a_k/\binom Mk$ for $0\le k\le d$, extending $b_k$ by zero beyond $d$. For $2\le r\le M$, let
\[
 T_r=\sum_{k=0}^d\binom rk b_k,\qquad
 \nu_r(k)=\frac{a_k(r)_k/(M)_k}{T_r},\qquad B_r=\binom MrT_r,
\]
where $(u)_k$ is a falling factorial and terms with $k>r$ vanish. Write $\mu=\E_{\nu_r}K$, $v=\Var_{\nu_r}K$, and $m=r-\mu$.

The multipliers $w_k=(r)_k/(M)_k$ have nonincreasing consecutive ratios, bounded above by $r/M$. Thus $\nu_r$ is dominated in likelihood ratio by the size law at activity $cr/M$, and its mean is no larger. This implication follows from the nonpositive covariance of an increasing function and a decreasing function. Its mean is positive and strictly below $d$. Since the hard-core mean increases continuously from zero, with derivative $\Var_tK/t>0$, there is a unique $t\in(0,cr/M]$ at which the size law $p_t$ has mean $\mu$. The ratio $\nu_r(k)/p_t(k)$ is proportional to $w_k(c/t)^k$ and is log-concave on interval support.

Lemma~\ref{lem:relative-lc-variance} and
hypothesis~\eqref{eq:assumption} consequently give
\begin{equation}\label{eq:vmu}
 v\le\mu.
\end{equation}

Counting extensions of independent sets gives
\[
 (k+1)i_{k+1}(H)\le(h-k)i_k(H).
\]
Since $M\ge h$, it follows that $b_{k+1}\le\rho b_k$, and hence
\[
 (k+1)\nu_r(k+1)\le\rho(r-k)\nu_r(k).
\]
Summing yields $\mu\le\rho(r-\mu)=\rho m$.

Fix $r=j+1\ge2$. Binomial identities give
\[
 \frac{T_{r-1}}{T_r}=\frac mr,\qquad
 \frac{T_{r-2}}{T_r}=\frac{m^2-m+v}{r(r-1)},
\]
so
\begin{equation}\label{eq:identity}
 \frac{B_j^2}{B_{j-1}B_{j+1}}
 =\frac{M-j+1}{M-j}\frac{m^2}{m^2-m+v}.
\end{equation}
The denominator is positive: it is $\E[(r-K)(r-K-1)]$, and $\nu_r(0)>0$.

Put $\delta=m-v$. Since $v\le\mu$ and $m=r-\mu$, we have
\[
 \frac{\delta}{m^2}\ge
 \frac{r-2\mu}{(r-\mu)^2}.
\]
The function
\[
 \psi(x)=\frac{r-2x}{(r-x)^2}
\]
is strictly decreasing for $0<x<r$, since
$\psi'(x)=-2x/(r-x)^3<0$. Extension counting gives
$\mu\le r\rho/(1+\rho)$, and therefore
\[
 \frac{\delta}{m^2}\ge
 \psi\!\left(\frac{r\rho}{1+\rho}\right)
 =\frac{1-\rho^2}{r}.
\]
Together with \eqref{eq:identity}, this proves
\begin{equation}\label{eq:rho-squared-transfer}
 \frac{B_j^2}{B_{j-1}B_{j+1}}\ge
 \frac{M-j+1}{M-j}\frac{j+1}{j+\rho^2}.
\end{equation}
In fact, the inequality is always strict. If $H$ has an edge, then
extension counting is strict at $k=1$. It is also strict there when
$M>h$, because $(h-1)/(M-1)<h/M$. Since $r\ge2$ and
$\nu_r(1)>0$, either condition gives
$\mu<r\rho/(1+\rho)$, so strict decrease of $\psi$ proves the
strict version of \eqref{eq:rho-squared-transfer}. In the remaining
case $M=h$ and $H$ has no edges,
$B_j=(1+c)^j\binom Mj$. Its ratio equals
$\frac{M-j+1}{M-j}\frac{j+1}{j}$, which is strictly larger because
$\rho=c>0$. As before, $M=1$ has no internal rank.
\end{proof}

\begin{corollary}[Forest bases with isolated-vertex smoothing]\label{thm:linear-smoothing}
Let $H$ be a nonempty forest of order $h$, let $M\ge h$ be an
integer, and let $0<c\le1$. Put $\rho=ch/M$ and
\[
 B_j=[x^j](1+x)^M I\!\left(H;\frac{cx}{1+x}\right).
\]
Then, for $1\le j<M$,
\[
 \frac{B_j^2}{B_{j-1}B_{j+1}}>
 \frac{M-j+1}{M-j}\frac{j+1}{j+\rho^2}
 \ge\frac{M-j+1}{M-j}>1.
\]
\end{corollary}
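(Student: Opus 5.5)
This corollary follows by applying Theorem~\ref{thm:general-transfer} to the forest base $H$ with the given $M$ and $c$; the only hypothesis to check is the variance condition \eqref{eq:assumption}. Lemma~\ref{lem:forest-subpoisson} gives $\Var_tK<\E_tK$ for every nonempty forest at every common activity $0<t\le1$. Since $c\le1$, the interval $(0,c]$ lies inside $(0,1]$, so the nonstrict hypothesis $\Var_tK\le\E_tK$ holds on $(0,c]$. The remaining hypotheses are immediate: $M\ge h$ is assumed, $c>0$, and $\rho=ch/M\le h/M\le1$. Theorem~\ref{thm:general-transfer} therefore yields the first, strict inequality
\[
 \frac{B_j^2}{B_{j-1}B_{j+1}}>\frac{M-j+1}{M-j}\frac{j+1}{j+\rho^2}\qquad(1\le j<M).
\]

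For the second inequality, I compare factors. Since $0<\rho\le1$, we have $\rho^2\le1$, so $j+\rho^2\le j+1$. Hence $(j+1)/(j+\rho^2)\ge1$, and multiplying by the positive factor $(M-j+1)/(M-j)$ gives the middle bound. The final inequality $(M-j+1)/(M-j)>1$ holds because $M-j\ge1$ for $j<M$.

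There is no substantive obstacle here: all analytic content sits in Lemma~\ref{lem:forest-subpoisson} and Theorem~\ref{thm:general-transfer}. The one point needing care is that the activity range of the variance hypothesis must contain $(0,c]$. This is exactly why $c\le1$ is imposed, and it matches the range $0<t\le1$ on which the rooted reserve was proved.
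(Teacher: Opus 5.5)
Your proposal is correct and matches the paper's proof: Lemma~\ref{lem:forest-subpoisson} supplies the variance hypothesis of Theorem~\ref{thm:general-transfer} on $(0,c]\subseteq(0,1]$. The remaining comparisons follow from $\rho\le1$ and $M-j\ge1$; the paper leaves these implicit, and you spell them out.
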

\begin{proof}
Lemma~\ref{lem:forest-subpoisson} supplies the variance hypothesis
of Theorem~\ref{thm:general-transfer} throughout $(0,c]$.
\end{proof}

\begin{proof}[Proof of Theorem~\ref{thm:allorder-wc}]
By Lemma~\ref{lem:forest-corona-identity}, write
$F=(H\circ K_1)\sqcup sK_1$, with $h=|V(H)|$ and $m=h+s$.
If $h>0$, Corollary~\ref{thm:linear-smoothing}, with $M=m$ and $c=1$,
gives the stronger inequality
\begin{equation}\label{eq:isolates-gain}
 \frac{i_j(F)^2}{i_{j-1}(F)i_{j+1}(F)}>
 \frac{m-j+1}{m-j}\frac{j+1}{j+(h/m)^2}.
\end{equation}
If $h=0$, then $I(F;x)=(1+x)^m$ and the ratio is exactly
$[(j+1)/j][(m-j+1)/(m-j)]$. Both cases prove
\eqref{eq:main-forest-ratio}; $m\le1$ has no internal ranks.
Multiplication of $i_j(F)$ by $(m-j)!$ multiplies its curvature
ratio by $(m-j)/(m-j+1)$, yielding strict log-concavity of the
normalized sequence. A common positive coefficient tilt leaves
all curvature ratios unchanged.
\end{proof}

\begin{proof}[Proof of Corollary~\ref{cor:forest-clique-coronas}]
For $h=|V(H)|$, choosing the independent set in the base gives
\[
 I(H\circ K_p;x)=(1+px)^h I\!\left(H;\frac{x}{1+px}\right).
\]
Set $y=px$ and apply Corollary~\ref{thm:linear-smoothing} with
$M=h$ and $c=1/p$. Rescaling the variable preserves adjacent
coefficient ratios. This proves~\eqref{eq:clique-ratio}.
The empty forest has no internal ranks.
\end{proof}

For $p>1$, the factor $p^2(j+1)/(p^2j+1)$ in~\eqref{eq:clique-ratio}
is strictly greater than one. As $p$ increases, this factor tends
to $(j+1)/j$, the other factor in the binomial coefficient ratio.
Likewise, \eqref{eq:isolates-gain} records an additional strict
improvement when isolated vertices are adjoined.

\begin{corollary}[A component-sensitive coefficient gap]
\label{cor:component-coefficient-gap}
Let $H$ be a nonempty forest of order $h$, with largest component
order $\ell$. If $M\ge h$ is an integer, $0<c\le1$, and
$B_j=[x^j](1+x)^M I(H;cx/(1+x))$, then
\begin{equation}\label{eq:component-coefficient-gap}
 \frac{B_j^2}{B_{j-1}B_{j+1}}>
 \frac{M-j+1}{M-j}\frac{h\ell+2j+2}{h\ell+2j+1}
 \qquad(1\le j<M).
\end{equation}
In particular, every well-covered forest with independence number $m$
satisfies the coarser universal bound
\begin{equation}\label{eq:universal-finite-margin}
 \frac{i_j(F)^2}{i_{j-1}(F)i_{j+1}(F)}>
 \frac{m-j+1}{m-j}\frac{m^2+2j+2}{m^2+2j+1}
 \qquad(1\le j<m).
\end{equation}
\end{corollary}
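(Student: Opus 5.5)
The plan is to reuse the mean-matching machinery from the proof of Theorem~\ref{thm:general-transfer}, replacing the plain hypothesis $v\le\mu$ by the quantitative reserve of Corollary~\ref{cor:explicit-variance-reserve}. Fix $1\le j<M$ and put $r=j+1$. Define $\nu_r$, $\mu=\E_{\nu_r}K$, $v=\Var_{\nu_r}K$ and $m=r-\mu$ exactly as in that proof. Identity~\eqref{eq:identity} gives
\[
 \frac{B_j^2}{B_{j-1}B_{j+1}}=\frac{M-j+1}{M-j}\,\frac{m^2}{m^2-\delta},\qquad \delta=m-v .
\]
Write $X=h\ell+2j+1$, so that $X+1=h\ell+2r$. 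A one-line rearrangement, using $m^2-\delta>0$, shows that the target~\eqref{eq:component-coefficient-gap} is equivalent to
\[
 (h\ell+2r)\,\delta>m^2 .
\]

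\emph{Variance input for $\nu_r$.} As in the proof of Theorem~\ref{thm:general-transfer}, there is a matching activity $t\in(0,cr/M]\subseteq(0,1]$ whose hard-core size law $p_t$ has mean $\mu$. Lemma~\ref{lem:relative-lc-variance} then gives $v\le V_t:=\Var_{p_t}K$. Corollary~\ref{cor:explicit-variance-reserve} applied at activity $t$ gives $\mu-V_t>V_t^2/(h\ell)$. Set $g=\mu-v$. Since $v\le V_t$ and $v\ge0$, we get
\[
 g\ge\mu-V_t>\frac{V_t^2}{h\ell}\ge\frac{v^2}{h\ell}.
\]
This is the step I expect to need care: the reserve is proved for the matched law $p_t$, not for $\nu_r$, and the argument only works because both sides move favourably under $v\le V_t$. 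Extension counting, also as in Theorem~\ref{thm:general-transfer}, gives $\mu\le r\rho/(1+\rho)\le r/2$ because $\rho\le1$.

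\emph{Scalar finish.} Expand $\delta=r-2\mu+g$ and $m^2=(r-\mu)^2=r(r-2\mu)+\mu^2$. Then
\[
 (h\ell+2r)\delta-m^2=(r-2\mu)(h\ell+r)+g(h\ell+2r)-\mu^2 .
\]
The first term is nonnegative because $\mu\le r/2$. Substituting $v=\mu-g$ into $g\,h\ell>v^2$ gives $\mu^2<g\,h\ell+2\mu g-g^2\le g(h\ell+r)$, again using $2\mu\le r$. Hence the whole expression exceeds $g(h\ell+2r)-g(h\ell+r)=gr\ge0$, which gives the strict inequality. Here $g>0$ holds automatically from the strict reserve.

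\emph{Universal bound.} For a well-covered forest, write $F=(H\circ K_1)\sqcup sK_1$ by Lemma~\ref{lem:forest-corona-identity}, and take $M=m=h+s$ and $c=1$. If $h>0$, then $\ell\le h\le m$, so $h\ell\le m^2$. The factor $(y+2j+2)/(y+2j+1)$ is decreasing in $y$, so~\eqref{eq:component-coefficient-gap} implies~\eqref{eq:universal-finite-margin}. If $h=0$, then $I(F;x)=(1+x)^m$ and the ratio equals $\frac{m-j+1}{m-j}\frac{j+1}{j}$. This exceeds the stated bound because $m^2+2j+1>j$, which settles this case directly.
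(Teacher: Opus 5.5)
Your proposal is correct and follows essentially the same route as the paper. It uses mean matching and Lemma~\ref{lem:relative-lc-variance} to carry the reserve of Corollary~\ref{cor:explicit-variance-reserve} over to $\nu_r$ as $\mu-v>v^2/(h\ell)$, then combines the extension bound $\mu\le r/2$ with identity~\eqref{eq:identity}, and treats the well-covered case in the same way. The only difference is cosmetic: the paper's scalar finish writes $m^2=v^2+2m\delta-\delta^2<(h\ell+2m)\delta<(h\ell+2r)\delta$ directly, instead of expanding in $\mu$ and $g$.
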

\begin{proof}
Use the notation of Theorem~\ref{thm:general-transfer}, with
$r=j+1$, mean $\mu$, variance $v$ under $\nu_r$, and $m=r-\mu$.
Mean matching gives an activity $0<t\le c\le1$. If $V$ is the
variance of the matched hard-core law, Lemma~\ref{lem:relative-lc-variance}
gives $v\le V$. Corollary~\ref{cor:explicit-variance-reserve} implies,
with $D=h\ell$,
\[
 \mu-v\ge\mu-V>\frac{V^2}{D}\ge\frac{v^2}{D}.
\]
The extension bound in Theorem~\ref{thm:general-transfer} gives
$\mu\le\rho m\le m$, where $\rho=ch/M\le1$. Hence
$\delta=m-v>v^2/D$ and
\[
 m^2=v^2+2m\delta-\delta^2<(D+2m)\delta<(D+2r)\delta.
\]
The denominator $m^2-\delta=m^2-m+v$ in~\eqref{eq:identity}
is positive. Therefore that identity gives
\[
 \frac{B_j^2}{B_{j-1}B_{j+1}}
 =\frac{M-j+1}{M-j}\frac1{1-\delta/m^2}
 >\frac{M-j+1}{M-j}\frac{D+2r}{D+2r-1}.
\]
For $F=(H\circ K_1)\sqcup sK_1$, take $c=1$ and
$M=\alpha(F)=h+s$; then $h\ell\le M^2$ gives
\eqref{eq:universal-finite-margin}. If $H$ is empty, the binomial
ratio has the additional factor $(j+1)/j$, which is larger than
$(M^2+2j+2)/(M^2+2j+1)$. There are no internal ranks when $M\le1$.
\end{proof}

The classical star-corona family studied in~\cite{LM2004}
also calibrates the terminal factor in Theorem~\ref{thm:allorder-wc}.

\begin{proposition}[Sharpness at every fixed terminal offset]
\label{prop:terminal-factor-sharpness}
For every integer $q\ge1$,
\[
 \inf_{\substack{F\text{ a well-covered tree}\\\alpha(F)>q}}
 \frac{i_{\alpha(F)-q}(F)^2}
 {i_{\alpha(F)-q-1}(F)i_{\alpha(F)-q+1}(F)}
 =\frac{q+1}{q}.
\]
The infimum is not attained.
\end{proposition}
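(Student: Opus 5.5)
Two things are needed. First, Theorem~\ref{thm:allorder-wc} gives a lower bound. With $m=\alpha(F)$ and $j=m-q$, inequality~\eqref{eq:main-forest-ratio} gives a ratio strictly greater than $(m-j+1)/(m-j)=(q+1)/q$ for every well-covered tree with $\alpha(F)>q$. So the infimum is at least $(q+1)/q$ and is never attained. Second, we need a family of well-covered trees whose ratio at offset $q$ tends to $(q+1)/q$. Following the remark before the statement, we take star coronas $F_n=K_{1,n}\circ K_1$, which are well-covered trees by Lemma~\ref{lem:wc-representation}, with $\alpha(F_n)=n+1$.

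By Lemma~\ref{lem:forest-corona-identity} with $H=K_{1,n}$, we have $I(H;y)=(1+y)^n+y$. This gives
\[
 I(F_n;x)=(1+x)^{n+1}\Bigl[\Bigl(\tfrac{1+2x}{1+x}\Bigr)^{n}+\tfrac{x}{1+x}\Bigr]
 =(1+x)(1+2x)^n+x(1+x)^n .
\]
So the coefficients are explicit:
\[
 i_k=2^k\tbinom nk+2^{k-1}\tbinom n{k-1}+\tbinom n{k-1}.
\]
Set $m=n+1$ and $k=m-q+\epsilon$ with $\epsilon\in\{-1,0,1\}$. For fixed $q$ and $n\to\infty$, the terms $2^k\binom nk+2^{k-1}\binom n{k-1}$ dominate $\binom n{k-1}$ by an exponential factor $2^{k-1}$. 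Write $p=n-k$, which is fixed. Then the top part is
\[
 2^{k-1}\tbinom{n}{p}\Bigl(2+\tfrac{p+1}{n-p}\Bigr)=2^{k}\tbinom np\,(1+o(1)).
\]
Hence, up to the common factor $2^k$, the sequence behaves like $\binom n{n-k}$ at the three indices. The curvature ratio of $\binom{n}{p}$ in $p$ at fixed $p$ is
\[
 \frac{\binom np^2}{\binom n{p-1}\binom n{p+1}}=\frac{(p+1)(n-p+1)}{p(n-p)}\to\frac{p+1}{p}.
\]
The factors $2^{k}$ cancel in the ratio, since their exponents are arithmetic. At the middle index $k=m-q=n+1-q$ we have $p=n-k=q-1$. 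This would give the limit $q/(q-1)$, which is off by one, so the indexing must be redone carefully.

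Doing so with the exact form $(1+x)(1+2x)^n$, the top-degree behaviour is $\sum_k x^k 2^{k-1}\bigl[2\binom nk+\binom n{k-1}\bigr]$. Near $k=n+1-q$, the bracket equals $2\binom{n}{q-1}+\binom n{q}$ up to index shifts, and $\binom nq$ dominates for fixed $q$ as $n\to\infty$. So the effective profile at index $k$ is $\binom{n}{n+1-k}$, that is $p'=n+1-k=q$, and the ratio tends to $(q+1)/q$ as required. This index bookkeeping is the main place to be careful. The obstacle is a clean error estimate: the three neglected pieces, namely $2\binom n{p'-1}$ relative to $\binom n{p'}$ (which is $O(1/n)$) and the $x(1+x)^n$ term (which is exponentially small), must both vanish in the limit. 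Writing each $i_k=2^{k-1}\binom n{n+1-k}(1+\eta_k)$ with $\eta_k\to0$ then finishes the proof. Combined with the strict lower bound, this shows the infimum equals $(q+1)/q$ and is not attained.
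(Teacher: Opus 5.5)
Your final argument is correct and is essentially the paper's proof. Theorem~\ref{thm:allorder-wc} gives the strict lower bound and hence non-attainment, and the star coronas $K_{1,n}\circ K_1$ satisfy $i_k=2^{k-1}\binom{n}{n+1-k}(1+o(1))$ at the three relevant indices, which yields the limit $(q+1)/q$; the paper writes the same asymptotic as $i_{m-q}\sim 2^{m-q-1}m^q/q!$.

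Your first bookkeeping pass should be deleted rather than left in the write-up. With $k=n-p$ one has $\binom{n}{k-1}/\binom{n}{k}=(n-p)/(p+1)$, not $(p+1)/(n-p)$, so that term dominates rather than being negligible. This inversion is exactly why that pass gave the wrong limit $q/(q-1)$.
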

\begin{proof}
Theorem~\ref{thm:allorder-wc} supplies the strict lower bound.
For $m\ge2$, let $F_m=K_{1,m-1}\circ K_1$, a well-covered tree
with independence number $m$. The classical corona identity gives
\[
 I(F_m;x)=(1+x)(1+2x)^{m-1}+x(1+x)^{m-1}.
\]
For each fixed integer $q\ge0$ and $m>q$, with binomial
coefficients of negative lower index interpreted as zero,
\[
 i_{m-q}(F_m)=
 2^{m-q-1}\left[\binom{m-1}{q}+2\binom{m-1}{q-1}\right]
 +\binom{m-1}{q}
 \sim 2^{m-q-1}\frac{m^q}{q!}.
\]
Applying this asymptotic at $q-1,q,q+1$ shows that the displayed
ratio tends to $(q+1)/q$ as $m\to\infty$.
\end{proof}

\begin{corollary}[An improved universal finite-size margin]
\label{cor:improved-universal-margin}
Let $H$ be a nonempty forest of order $h$, let $M\ge h$ be an
integer, and let $0<c\le1$. Define
\[
 B_j=[x^j](1+x)^M I\!\left(H;\frac{cx}{1+x}\right).
\]
For $1\le j<M$, put
\[
 D_{M,j}=\left(M-\frac{j+1}{2}\right)^2+j+1.
\]
Then
\begin{equation}\label{eq:improved-universal-margin}
 \frac{B_j^2}{B_{j-1}B_{j+1}}>
 \frac{M-j+1}{M-j}\frac{D_{M,j}}{D_{M,j}-1}.
\end{equation}
In particular, every well-covered forest $F$ satisfies this inequality
with $B_j=i_j(F)$ and $M=\alpha(F)$.
\end{corollary}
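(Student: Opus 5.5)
The plan is to rerun the proof of Corollary~\ref{cor:component-coefficient-gap}, replacing the explicit reserve with the mean-sensitive reserve of Corollary~\ref{cor:mean-sensitive-variance-reserve}. Fix $1\le j<M$ and put $r=j+1$. Use the notation of Theorem~\ref{thm:general-transfer}: $\mu,v$ are the mean and variance under $\nu_r$, $m=r-\mu$ and $\delta=m-v$. By identity~\eqref{eq:identity}, the claim \eqref{eq:improved-universal-margin} is equivalent to
\[
 m^2<D_{M,j}\,\delta,\qquad D_{M,j}=\Bigl(M-\frac r2\Bigr)^2+r .
\]

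The first step is the variance input. Mean matching gives an activity $0<t\le c\le1$ at which the hard-core law of $H$ has mean $\mu$ and some variance $V$. Lemma~\ref{lem:relative-lc-variance} gives $v\le V$, and Corollary~\ref{cor:mean-sensitive-variance-reserve} gives
\[
 \mu-v\ \ge\ \mu-V\ \ge\ \frac{V^2}{(h-\mu)^2}\ \ge\ \frac{v^2}{N^2},\qquad N:=h-\mu\le M-\mu .
\]
The middle inequality is strict when $h>1$. The case $h=1$, where $H=K_1$, can be checked by hand, since then $B_j$ is explicit; it is the edgeless case already handled at the end of Theorem~\ref{thm:general-transfer}. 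Extension counting gives $\mu\le r\rho/(1+\rho)\le r/2$, so $m-\mu\ge0$ and $\delta\ge r-2\mu\ge0$.

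From these I get two bounds on $m^2/\delta$. The first is \emph{(A)}: since $\delta\ge r-2\mu$,
\[
 \frac{m^2}{\delta}\le\frac{(r-\mu)^2}{r-2\mu}=\psi(\mu)^{-1},
\]
which is increasing in $\mu$ and small for small $\mu$. The second is \emph{(B)}: writing $v^2=(m-\delta)^2$ and using $v^2\le N^2(\mu-v)=N^2\bigl(\delta-(m-\mu)\bigr)$ gives
\[
 m^2\le (N^2+2m)\,\delta-\delta^2-N^2(m-\mu).
\]
At the extreme point $\mu=r/2$ we have $m=r/2$, $N\le M-r/2$ and $m-\mu=0$, so (B) gives exactly $m^2<\bigl((M-r/2)^2+r\bigr)\delta=D_{M,j}\delta$. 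This is the source of the constant $D_{M,j}$. The remaining work is to show that for every $\mu\in(0,r/2]$ one of the two bounds, with the neglected terms $-\delta^2$ and $-N^2(m-\mu)$ retained, is at most $D_{M,j}$.

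The main obstacle is this one-variable optimization. The quantity $(M-\mu)^2+2(r-\mu)$ exceeds $D_{M,j}$ for $\mu<r/2$, so (B) alone fails away from the endpoint. The first approach I would try is to treat (B) as a quadratic inequality in $\delta$ and solve it as a lower bound on $\delta$, keeping the term $-N^2(m-\mu)=-N^2(r-2\mu)$. This is large exactly where the $N^2$ coefficient is too big, and should give $\delta\ge (r-2\mu)+$(something); I would then verify $m^2/\delta\le D_{M,j}$ by checking monotonicity in $\mu$, with the maximum attained at $\mu=r/2$. If that is too messy, the fallback is a case split at a threshold $\mu_0$ solving $(r-\mu_0)^2/(r-2\mu_0)=D_{M,j}$: use (A) below $\mu_0$ and (B) above it, where $N^2(r-2\mu)$ is large compared with the excess $(M-\mu)^2-(M-r/2)^2\approx(r-2\mu)M$. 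Strictness comes from the strict reserve for $h>1$. The final assertion follows from Lemma~\ref{lem:forest-corona-identity} with $c=1$ and $M=\alpha(F)$; when $H$ is empty, the binomial ratio already carries the larger factor $(j+1)/j$.
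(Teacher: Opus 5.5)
Your set-up matches the paper's: the same mean matching, the chain $\mu-v\ge\mu-V\ge V^2/(h-\mu)^2$, the bound $\mu\le r/2$, and the reformulation of \eqref{eq:improved-universal-margin} as $m^2<D_{M,j}\delta$ via \eqref{eq:identity}. Your first suggested route is also the paper's route: solve (B) for the smallest admissible $\delta$, then show the resulting bound is worst at $\mu=r/2$. The gap is that you stop there. You check only the endpoint $\mu=r/2$, and you announce the monotonicity in $\mu$ (or the alternative case split at $\mu_0$) rather than prove it. The endpoint computation is the easy part, so the step that actually establishes the corollary is missing.

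To close it as the paper does, first use $N\le M-\mu$ to get $\mu-v\ge v^2/(M-\mu)^2$. This removes $h$, and it is needed before any comparison with $(M-\mu)^2$, such as the one in your fallback. For $0<x\le r/2$, let $y(x)>0$ solve $x-y=y^2/(M-x)^2$. Since $y\mapsto y+y^2/(M-x)^2$ is increasing, $v\le y(\mu)$, hence $\delta/m^2\ge f(\mu)$ with $f(x)=(r-x-y(x))/(r-x)^2$. With $B=M-x$ and $m_x=r-x$, implicit differentiation gives
\[
 1-y'(x)=\frac{2y(B+y)}{B(B^2+2y)},\qquad
 f'(x)=\frac{2y}{m_x^3}\Bigl(\frac{m_x(B+y)}{B(B^2+2y)}-1\Bigr)<0.
\]
The sign holds because $m_x\le B$ (as $r\le M$) and $B\ge M-r/2\ge1$, so $B+y<B^2+2y$. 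Therefore, with $L=M-r/2$,
\[
 f(\mu)\ge f(r/2)=\frac{4}{\bigl(L+\sqrt{L^2+2r}\bigr)^2}>\frac{1}{L^2+r}=\frac{1}{D_{M,j}}.
\]
This final strict comparison is where your retained $-\delta^2$ enters. Consequently only the nonstrict reserve is needed, and no separate case $h=1$ is required. Your justification of that case is also inaccurate: the edgeless case at the end of Theorem~\ref{thm:general-transfer} assumes $M=h$, so $H=K_1$ with $M\ge2$ is not covered there.
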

\begin{proof}
Use the notation and mean-matched hard-core law from the proof of
Theorem~\ref{thm:general-transfer}, with $r=j+1$, tilted-law mean
$\mu$, tilted-law variance $v$, and $m=r-\mu$. Its matched activity
lies in $(0,c]\subseteq(0,1]$. Write $V$ for the variance of the
matched hard-core law. Lemma~\ref{lem:relative-lc-variance} and
Corollary~\ref{cor:mean-sensitive-variance-reserve} imply
\begin{equation}\label{eq:improved-margin-variance-input}
 \mu-v\ge\mu-V\ge\frac{V^2}{(h-\mu)^2}
 \ge\frac{v^2}{(M-\mu)^2}.
\end{equation}
Also, extension counting gives
$0<\mu\le\rho m\le m$, where $\rho=ch/M\le1$, so
$\mu\le r/2$.

For $0<x\le r/2$, let $y=y(x)>0$ be the unique solution of
\[
 x-y=\frac{y^2}{(M-x)^2}.
\]
The right-hand side of $x=y+y^2/(M-x)^2$ is strictly increasing
in $y\ge0$, so \eqref{eq:improved-margin-variance-input} gives
$v\le y(\mu)$. Set
\[
 f(x)=\frac{r-x-y(x)}{(r-x)^2}.
\]
We claim that $f$ is strictly decreasing. Put $B=M-x$ and
$m_x=r-x$. Implicit differentiation gives
\[
 1-y'(x)=\frac{2y(B+y)}{B(B^2+2y)},\qquad
 f'(x)=\frac{2y}{m_x^3}
 \left(\frac{m_x(B+y)}{B(B^2+2y)}-1\right).
\]
Here $m_x\le B$ because $r\le M$, and
$B\ge M-r/2\ge1$ because $M\ge2$. Consequently,
\[
 \frac{m_x(B+y)}{B(B^2+2y)}
 \le\frac{B+y}{B^2+2y}<1,
\]
where the strict inequality follows from $B^2-B+y>0$.
This proves the claim.

Put $L=M-r/2$ and $\delta=m-v$. Evaluating $f$ at $r/2$ yields
\[
 \frac{\delta}{m^2}\ge f(\mu)\ge f(r/2)
 =\frac{4}{\left(L+\sqrt{L^2+2r}\right)^2}
 >\frac1{L^2+r}=\frac1{D_{M,j}}.
\]
For the strict inequality, square the positive quantities to obtain
$L\sqrt{L^2+2r}<L^2+r$, and hence
$\left(L+\sqrt{L^2+2r}\right)^2<4(L^2+r)$.
Since $m^2-\delta=m^2-m+v>0$, identity~\eqref{eq:identity} gives
\[
 \frac{B_j^2}{B_{j-1}B_{j+1}}
 =\frac{M-j+1}{M-j}\frac1{1-\delta/m^2}
 >\frac{M-j+1}{M-j}\frac{D_{M,j}}{D_{M,j}-1}.
\]
For a well-covered forest with a nonempty base, use
Lemma~\ref{lem:forest-corona-identity}. If its base is empty, then
$i_j(F)=\binom Mj$ and its ratio is
$\frac{M-j+1}{M-j}\frac{j+1}{j}$, which is strictly larger because
$D_{M,j}>j+1$. There are no internal ranks when $M\le1$.
\end{proof}

\subsection{One less binomial factor}\label{subsec:one-less-smoothing}
The preceding forest transform uses a binomial exponent at least the
base order. If the base has an edge, one factor can be removed while
retaining ordinary strict log-concavity. The proof uses the same
mean-matching comparison, together with a sharper extension bound
for independent sets of positive size.

\begin{theorem}[Removing one binomial factor]\label{thm:one-less-smoothing}
Let $H$ be any forest containing at least one edge, and put $h=|V(H)|$.
Then
\[
 Q(x)=(1+x)^{h-1}I\left(H;\frac{x}{1+x}\right)
     =\frac{I(H\circ K_1;x)}{1+x}
\]
is a polynomial of degree $h-1$ with strictly positive coefficients,
and it is strictly log-concave at every internal index.
\end{theorem}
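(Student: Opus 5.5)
The first assertions are direct. Since $H$ has an edge, $\alpha(H)\le h-1$. Expanding gives
\[
 Q(x)=\sum_{k=0}^{\alpha(H)} i_k(H)\,x^k(1+x)^{h-1-k},
\]
which is a polynomial. Its $x^j$ coefficient is $\sum_k i_k(H)\binom{h-1-k}{j-k}$. This is at least $\binom{h-1}{j}>0$ for $0\le j\le h-1$, because of the $k=0$ term. So the degree is exactly $h-1$. The quotient identity is \eqref{eq:corona-identity} divided by $1+x$.

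For strict log-concavity I would rerun the proof of Theorem~\ref{thm:general-transfer} with $M=h-1$ and $c=1$. Put $a_k=i_k(H)$ and $b_k=a_k/\binom{M}{k}$, and for $r=j+1$ let $\nu_r(k)\propto a_k(r)_k/(M)_k$. Set $\mu=\E_{\nu_r}K$, $v=\Var_{\nu_r}K$ and $m=r-\mu$. Identity \eqref{eq:identity} holds unchanged:
\[
 \frac{B_j^2}{B_{j-1}B_{j+1}}=\frac{M-j+1}{M-j}\cdot\frac{m^2}{m^2-m+v}.
\]
The prefactor already exceeds $1$. So it suffices to show $\delta=m-v\ge0$, i.e.\ $v\le m$, with positivity of the denominator as before.

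I would get $v\le m$ from two inequalities, $v\le\mu$ and $\mu\le m$.

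For $v\le\mu$ I would use mean matching. The weights $(r)_k/(M)_k$ have decreasing ratios, so Lemma~\ref{lem:relative-lc-variance} applies once the matched hard-core activity $t_*$ lies in $(0,1]$. Then Lemma~\ref{lem:forest-subpoisson} gives $v\le\Var_{t_*}K<\E_{t_*}K=\mu$. The activity bound needs care, because $r/M$ can equal $1$ and the $k=0$ ratio is $b_1/b_0=h/(h-1)>1$. I would show the matched activity is at most $1$ by comparing $\nu_r$ with the law at $t=1$ through likelihood-ratio dominance, using that only the first ratio is affected.

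For $\mu\le m$, i.e.\ $\mu\le r/2$, I need the sharper extension bound announced before the theorem. It should say that for $k\ge1$ one has $(k+1)i_{k+1}(H)\le(h-1-k)i_k(H)$, equivalently $b_{k+1}\le b_k$. Double counting shows this is equivalent to: on average over independent $k$-sets $S$, at least one vertex outside $S$ is blocked (lies in $N(S)$). To prove it I would fix an edge $uv$. Every $S$ meeting $\{u,v\}$ blocks a vertex. For the sets $S$ avoiding $N[u]\cup N[v]$ and having no neighbors at all, I would charge them injectively to sets $S-x+u$, which have surplus blocked vertices. This injection is the step I expect to be the main obstacle.

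Given $b_{k+1}\le b_k$ for $k\ge1$, the recursion
\[
 (k+1)\nu_r(k+1)\le(r-k)\nu_r(k)\qquad(k\ge1)
\]
holds. At $k=0$ only the factor $h/(h-1)$ appears. Summing, the surplus from $k=0$ should be absorbed using $\nu_r(1)>0$ and the strict extension count at $k=1$ from the edge. This would give $\mu\le r/2$, hence $\delta\ge0$ and strict log-concavity.

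If this absorption fails, my fallback is a direct argument. I would use the strict reserve $\mu-v>v^2/(h\ell)$ from Corollary~\ref{cor:explicit-variance-reserve} to buy the missing slack in $\delta$, and close with the monotone-$\psi$ argument of Theorem~\ref{thm:general-transfer}.
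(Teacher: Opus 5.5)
Your plan fails at the step $\mu\le m$, i.e.\ $\mu\le r/2$. This inequality is false even when $H$ has no isolated vertices, so ``it suffices to show $\delta\ge0$'' cannot be reached along this route. Take $H=2K_2$, so $h=4$, $M=3$, $(i_k(H))=(1,4,4)$, and take $r=2$. Then $\nu_2\propto(1,8/3,4/3)$ and $\mu=16/15>1=r/2$. The $k=0$ surplus $\nu_r(1)=\frac{r(M+1)}{M}\nu_r(0)$ cannot be absorbed by strictness at $k=1$. Each singleton of $2K_2$ has exactly $M-1$ extensions, so $2i_2=(M-1)i_1$ holds with equality.

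Summing the recursion gives only $2\mu\le r+\frac rM\nu_r(0)$. The missing idea is to spend the prefactor $\frac{M-j+1}{M-j}$ rather than discard it: at $j=r-1$ the ratio exceeds one exactly when $m^2>(M-r+1)(v-m)$. The paper bounds $v-m\le 2\mu-r\le r\nu_r(0)/M$. It then uses $\nu_r(0)+\nu_r(1)\le1$ to get $\nu_r(0)\le M/(M+r(M+1))$. This makes the right side smaller than $r/(r+1)$, while $m>5r/12$.

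A concrete version of your $\psi$ fallback also works. From the recursion, $\mu\le\rho m$ with $\rho=(M+1)/M$, and the $\psi$ argument then gives the ratio bound $\frac{M-j+1}{M-j}\frac{j+1}{j+\rho^2}$. This exceeds $1$ because $j(M+1)>M$. Corollary~\ref{cor:explicit-variance-reserve} is not the right tool here: for $2K_2$ it guarantees only about $0.03$ of slack, which is below the deficit $2\mu-r=2/15$.

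Second, your sharpened extension bound $(k+1)i_{k+1}(H)\le(h-1-k)i_k(H)$ for $k\ge1$ is false once isolated vertices are present. For $H=K_2\sqcup3K_1$ one has $i_1=5$, $i_2=9$, and $2i_2=18>15=3i_1$. So no injection of the kind you describe can exist. The bound is immediate exactly when $H$ has no isolated vertices, since every nonempty independent set then has an outside neighbor.

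You therefore need to prove the theorem first in that case. Then write $H=H_0\sqcup zK_1$, so that $Q(x)=(1+2x)^zQ_0(x)$. Finally, show that multiplication by $1+2x$ preserves strict log-concavity of a positive sequence. For $c_j=b_j+2b_{j-1}$ this follows from $c_j^2-c_{j-1}c_{j+1}=\Delta_j(b)+4\Delta_{j-1}(b)+2(b_jb_{j-1}-b_{j-2}b_{j+1})$.

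Your worry about the matched activity is misplaced, though harmless. The hard-core law has weights $i_k(H)t^k$, and the multiplier ratios $(r-k)/(M-k)\le r/M\le1$ already place $t_*$ in $(0,r/M]$. The ratio $b_1/b_0$ plays no role there.
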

\begin{proof}
First assume that $H$ has no isolated vertices and put $M=h-1$.
Because $H$ has an edge, $\alpha(H)\le M$, so the displayed
expression is a polynomial with positive coefficients at every index
$0,\ldots,M$. The case $M=1$ has no internal index.

Write $a_k=i_k(H)$ and $B_j=[x^j]Q(x)$. For $0\le r\le M$, define
\[
 T_r=\sum_k a_k\frac{(r)_k}{(M)_k},\qquad
 \nu_r(k)=\frac{a_k(r)_k/(M)_k}{T_r}.
\]
Terms with $k>r$ vanish, and coefficient extraction gives
$B_r=\binom MrT_r$. Now fix $2\le r\le M$ and let
$\mu=\mathbb E_{\nu_r}K$,
$v=\operatorname{Var}_{\nu_r}K$, and $m=r-\mu$.
As in the proof of Theorem~\ref{thm:general-transfer}, the multiplier
$(r)_k/(M)_k$ has nonincreasing consecutive ratios, at most $r/M$.
Thus $\nu_r$ has mean no greater than the hard-core size law of $H$
at common activity $r/M\le1$. Matching that mean to an activity
$t\in(0,r/M]$, the likelihood ratio to the corresponding size law
is log-concave. Lemma~\ref{lem:relative-lc-variance} and the forest variance bound
in Lemma~\ref{lem:forest-subpoisson} therefore imply
\[
 v\le\mu.
\]
This argument needs $M\ge\alpha(H)$, not $M\ge h$.

For every nonempty independent set $S$ of $H$, at least one vertex
outside $S$ is adjacent to $S$, because $H$ has no isolated vertices.
Therefore the number of possible one-vertex extensions is at most
$h-|S|-1=M-|S|$, and double counting gives
\[
 (k+1)a_{k+1}\le(M-k)a_k\quad(k\ge1),
 \qquad a_1=M+1.
\]
After multiplication by the falling-factorial weights, this becomes
\[
 (k+1)\nu_r(k+1)\le(r-k)\nu_r(k)\quad(k\ge1),
 \qquad \nu_r(1)=\frac{r(M+1)}M\nu_r(0).
\]
The endpoint $k=M$ is interpreted directly as $0\le0$; no ratio
with denominator $M-k$ is evaluated there. Summing yields
\[
 2\mu\le r+\frac rM\nu_r(0).
\]
Put $\varepsilon=\nu_r(0)/M$. Since
$1\ge\nu_r(0)+\nu_r(1)$,
\[
 0<\varepsilon\le\frac1{M+r(M+1)}
    <\frac1{M(r+1)}\le\frac16.
\]
Consequently
\[
 m\ge\frac r2(1-\varepsilon)>\frac{5r}{12},
 \qquad v-m\le\mu-m=2\mu-r\le r\varepsilon.
\]

For $j=r-1$, the exact coefficient identity is
\[
 \frac{B_j^2}{B_{j-1}B_{j+1}}
 =\frac{M-r+2}{M-r+1}
       \frac{m^2}{m^2-m+v}.
\]
Its denominator is positive, since it is
$\mathbb E_{\nu_r}[(r-K)(r-K-1)]$ and $r\ge2$, $\nu_r(0)>0$.
The displayed ratio exceeds one precisely when
\[
 m^2>(M-r+1)(v-m).
\]
If $v-m\le0$ this is immediate. Otherwise,
\[
 (M-r+1)(v-m)\le(M-r+1)r\varepsilon
 <\frac r{r+1}
 <\frac{25r^2}{144}<m^2.
\]
The middle strict inequality follows from
$25r(r+1)>144$, valid for every integer $r\ge2$.
This proves strict LC at all internal indices.

Finally, write a general $H$ as $H_0\sqcup zK_1$, where $H_0$
has no isolated vertices and still has an edge. If $h_0=|V(H_0)|$,
\[
 Q(x)=(1+2x)^z(1+x)^{h_0-1}
           I\left(H_0;\frac{x}{1+x}\right).
\]
The second factor is strictly LC by the preceding proof, and
convolution with each positive linear factor preserves strict LC.
For completeness, if $c_j=b_j+2b_{j-1}$ and $b$ is zero outside its
positive support, then
\[
 c_j^2-c_{j-1}c_{j+1}
 =\Delta_j(b)+4\Delta_{j-1}(b)
       +2(b_jb_{j-1}-b_{j-2}b_{j+1}),
\]
where $\Delta_j(b)=b_j^2-b_{j-1}b_{j+1}$. Decreasing consecutive coefficient ratios
make the last term nonnegative, and an internal or endpoint determinant
makes the sum strictly positive at every new internal index.
This also verifies the claimed degree and positive support.
\end{proof}

The conclusion concerns a polynomial quotient. It does not assert strict
log-concavity after deleting an arbitrary private leaf of a corona:
that deletion contributes an additional shifted positive term, whose
mixed determinant requires a separate argument. Nor is the factorial
factor of Theorem~\ref{thm:allorder-wc} asserted for this quotient.
The theorem removes one factor; it does not settle every smaller
binomial exponent down to $\alpha(H)$.

\subsection{The exact binomial ultra-log-concavity prefix}
\begin{lemma}[Adding an isolated vertex preserves an initial ULC prefix]
\label{lem:ulc-prefix-isolate}
Let $a_0,\ldots,a_n>0$, and write $r_k=a_k/\binom nk$.
Put $b_j=a_j+a_{j-1}$, where coefficients outside $\{0,\ldots,n\}$
are zero, and $c_j=b_j/\binom{n+1}{j}$.
For $1\le j\le n$, suppose that $(r_k)$ is log-concave at every
index among $j-1,j$ that lies in $\{1,\ldots,n-1\}$. Then
\[
 c_j^2-c_{j-1}c_{j+1}\ge
 \frac{(r_j-r_{j-1})^2}{(n+1)^2}\ge0.
\]
Consequently, multiplication by $1+x$ preserves any initial prefix
of binomial ultra-log-concavity.
\end{lemma}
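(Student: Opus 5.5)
The plan is to rewrite everything in terms of the normalized sequence $r_k$ and reduce the determinant to a single algebraic identity. Put $N=n+1$. Since $\binom nj/\binom{N}{j}=(N-j)/N$ and $\binom n{j-1}/\binom Nj=j/N$, the new normalized coefficients are convex combinations of adjacent old ones:
\[
 N c_j=(N-j)\,r_j+j\,r_{j-1}\qquad(0\le j\le N).
\]
The boundary conventions are automatic here. At $j=0$ the coefficient of $r_{-1}$ vanishes, and at $j=N$ the coefficient of $r_N$ vanishes, so no undefined $r$ is ever used.

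Fix $1\le j\le n$ and abbreviate $x=r_j>0$, $y=r_{j-1}>0$, $A=N-j$, $B=j$. Then
\[
 N c_{j-1}=(A+1)y+(B-1)r_{j-2},\qquad N c_{j+1}=(A-1)r_{j+1}+(B+1)x .
\]
I will bound the two outer terms using the hypothesis. If $j-1\in\{1,\ldots,n-1\}$, log-concavity of $(r_k)$ at $j-1$ gives $r_{j-2}\le y^2/x$. If $j-1=0$, the coefficient $B-1$ is zero, so the same bound may be used formally. Symmetrically, $r_{j+1}\le x^2/y$ holds either by log-concavity at $j$ when $j\le n-1$, or formally when $j=n$, since then $A-1=0$. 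All coefficients are nonnegative, so these substitutions give
\[
 N c_{j-1}\le \frac yx\bigl[(A+1)x+(B-1)y\bigr],\qquad
 N c_{j+1}\le \frac xy\bigl[(A-1)x+(B+1)y\bigr].
\]
The prefactors $y/x$ and $x/y$ cancel in the product.

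The key step is the difference-of-squares identity
\[
 \bigl[(Ax+By)+(x-y)\bigr]\bigl[(Ax+By)-(x-y)\bigr]=(Ax+By)^2-(x-y)^2 .
\]
Combined with the bounds above, it gives
\[
 N^2c_{j-1}c_{j+1}\le (N c_j)^2-(x-y)^2 .
\]
This is exactly $c_j^2-c_{j-1}c_{j+1}\ge (r_j-r_{j-1})^2/(n+1)^2\ge0$. The only point needing care is the index bookkeeping at the two ends, which the vanishing coefficients handle, so I expect no real obstacle.

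For the consequence, suppose $(r_k)$ is log-concave at every internal index $1,\ldots,p$. Take any $j\le p$ with $j\le n$. The indices among $j-1,j$ that are internal for $(r_k)$ all lie in $\{1,\ldots,p\}$, so the hypothesis of the inequality holds and $(c_j)$ is log-concave at $j$. If $p=n$, the remaining case is $j=N$, which is the right endpoint of $(c_j)$ and hence not an internal index. Therefore the ultra-log-concavity prefix $1,\ldots,p$ is inherited by the coefficients of $(1+x)\sum_k a_kx^k$.
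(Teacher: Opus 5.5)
Your proposal is correct and matches the paper's proof: the paper also writes $(n+1)c_j=(n+1-j)r_j+jr_{j-1}$, bounds $r_{j-2}$ and $r_{j+1}$ by log-concavity (using the vanishing coefficients $B-1$ at $j=1$ and $A-1$ at $j=n$), and shows that the resulting product differs from $((n+1)c_j)^2$ by exactly $(r_j-r_{j-1})^2$, which your difference-of-squares factorization makes explicit. Your closing sentence about the case $p=n$ is garbled, but harmless: what actually matters, as the paper notes, is that the newly exposed internal index $j=n$ needs log-concavity of $(r_k)$ only at $n-1$, and your argument for $j\le\min\{p,n\}$ already covers this.
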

\begin{proof}
Set $A=n+1-j$, $B=j$, $x=r_{j-1}$, and $y=r_j$. Then
$(n+1)c_j=Ay+Bx$. Log-concavity gives
$r_{j-2}\le x^2/y$ and $r_{j+1}\le y^2/x$ whenever needed.
At $j=1$ the first missing term has coefficient $B-1=0$;
at $j=n$ the second missing term has coefficient $A-1=0$.
Thus
\begin{align*}
 (n+1)^2(c_j^2-c_{j-1}c_{j+1})
 &\ge (Ay+Bx)^2\\
 &\quad-\bigl((A+1)x+(B-1)x^2/y\bigr)
           \bigl((A-1)y^2/x+(B+1)y\bigr)\\
 &=(y-x)^2.
\end{align*}
The argument also covers the newly exposed last internal index
when the original degree is smaller than the prescribed prefix.
\end{proof}

\begin{theorem}[The exact universal ULC prefix for well-covered forests]
\label{thm:exact-ulc-prefix}
Let $F$ be a well-covered forest and $m=\alpha(F)$. Then
\[
 \left(\frac{i_j(F)}{\binom mj}\right)^2
 \ge\frac{i_{j-1}(F)}{\binom m{j-1}}
       \frac{i_{j+1}(F)}{\binom m{j+1}}
 \qquad\bigl(1\le j\le\min\{3,m-1\}\bigr).
\]
At each such index the inequality is strict unless all components
of $F$ are single vertices or all components are single edges.
For every fixed $j\ge4$, infinitely many well-covered trees violate
this inequality at index $j$. Hence the universal fixed ULC indices
for well-covered forests are exactly $1,2,3$.
\end{theorem}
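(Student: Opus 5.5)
The plan is to reduce to a leaf corona, express the normalized coefficients through the falling-factorial sums $T_r$ from the proof of Theorem~\ref{thm:general-transfer}, check indices $1,2,3$ by a short inclusion--exclusion expansion, and use star coronas for the indices $j\ge4$.

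\emph{Reduction.} By Lemma~\ref{lem:forest-corona-identity}, write $F=(H\circ K_1)\sqcup sK_1$ with $m=h+s$. Each isolated vertex multiplies $I(F;x)$ by $1+x$. Lemma~\ref{lem:ulc-prefix-isolate} shows that this preserves any initial binomial ULC prefix. Its explicit lower bound $(r_j-r_{j-1})^2/(n+1)^2$ should also carry strictness forward, except in the all-isolated and perfect-matching cases. So it suffices to treat $F=H\circ K_1$ with $m=h$. Expanding~\eqref{eq:corona-identity} gives
\[
 i_j(F)=\sum_k i_k(H)\binom{h-k}{j-k},
 \qquad
 r_j:=\frac{i_j(F)}{\binom hj}=\sum_k i_k(H)\frac{(j)_k}{(h)_k}=T_j,
\]
in the notation of Theorem~\ref{thm:general-transfer} with $M=h$ and $c=1$. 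The goal at index $j$ is therefore $T_j^2\ge T_{j-1}T_{j+1}$, which only involves $i_k(H)$ for $k\le j+1\le4$.

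\emph{Indices $1,2,3$.} With $e$ edges, $P_2=\sum_v\binom{\deg v}{2}$ paths of length two, and $\varepsilon=2e/(h(h-1))$, inclusion--exclusion in the triangle-free graph $H$ gives
\[
 T_0=1,\qquad T_1=2,\qquad T_2=4-\varepsilon,\qquad
 T_3=8-6\varepsilon+\pi,\qquad \pi=\frac{6P_2}{(h)_3}.
\]
Index $1$ reads $4\ge4-\varepsilon$, strict exactly when $H$ has an edge. Index $2$ reduces to $2\pi\le4\varepsilon+\varepsilon^2$, that is, an upper bound for $P_2$ in terms of $e$ and $h$ in a forest. The star is the extremal candidate, and there the inequality has slack ($6/h$ versus $8/h$). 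The crude bound $P_2\le(\Delta-1)e$ is too weak, so I would instead use the forest-specific fact that the $P_2$ pairs of adjacent edges inject into pairs of edges and vertices in a controlled way, or more simply optimize $\sum_v\binom{\deg v}2$ subject to $\sum_v\deg v=2e\le2(h-1)$; convexity pushes the optimum toward the star.

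Index $3$ needs $T_4$ as well, which brings in the counts of $K_{1,3}$, $P_4$ (paths with three edges), $P_3\sqcup K_2$ ($P_3$ the two-edge path) and $2K_2$ subgraphs. The target is again a polynomial inequality in these forest statistics. I expect this to be the main obstacle. I would first try to write $T_3^2-T_2T_4$ as a nonnegative combination of local configuration counts, using forest identities such as $\#2K_2=\binom e2-P_2$. Failing that, I would induct by adding a leaf to $H$ and track how each count changes. For the equality cases: an edgeless $H$ gives the binomial sequence $r_j\equiv2^j$, which has equality throughout. A perfect matching of $K_2$'s after the isolate step should be the only other case where every slack term vanishes.

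\emph{Failure for $j\ge4$.} Take the star coronas $F_h=K_{1,h-1}\circ K_1$ from Proposition~\ref{prop:terminal-factor-sharpness}. Here
\[
 i_k(H)=\binom{h-1}k+[k=1],
 \qquad
 T_j=2^j+\frac{c_j}{h}+O(h^{-2}),
\]
where $c_j$ is computed explicitly from $\binom{h-1}{k}(j)_k/(h)_k=\binom jk(1-k/h)$ and the extra term $j/h$. Since the leading terms give $T_j^2-T_{j-1}T_{j+1}=0$, the sign of the ULC defect is that of
\[
 2^{j+1}c_j-2^{j-1}c_{j+1}-2^{j+1}c_{j-1}.
\]
Using $\sum_k k\binom jk=j2^{j-1}$, one expects $c_j=j-j2^{j-1}$. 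Substituting should give a defect that is negative exactly when $j\ge4$ (and zero or positive for $j\le3$, consistent with the first part). Then all sufficiently large $h$ violate index $j$, giving infinitely many well-covered trees.
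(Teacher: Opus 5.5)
Your reduction to $F=H\circ K_1$ via Lemma~\ref{lem:ulc-prefix-isolate}, the formulas $T_2=4-\varepsilon$ and $T_3=8-6\varepsilon+\pi$, the index-$1$ case, and the star-corona argument for $j\ge4$ all match the paper. In fact $T_j=2^j+(j-j2^{j-1})/h$ holds exactly, and the coefficient of $1/h$ in $T_j^2-T_{j-1}T_{j+1}$ is $2^{j-1}(3-j)$.

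The genuine gap is that indices $2$ and $3$ are not proved. At index $2$, the convexity optimization you describe uses only $\sum_v\deg v=2e\le2(h-1)$ and $\deg v\le h-1$. At $e=h-1$ that relaxation is maximized by two vertices of degree $h-1$. It yields only $P_2\le e(h-2)$, i.e.\ $\pi\le3\varepsilon$, which does not give $2\pi\le4\varepsilon+\varepsilon^2$. You need a constraint that uses triangle-freeness. Adjacent $a,b$ have disjoint neighborhoods, so $\deg a+\deg b\le h$ and
\[
2P_2=\sum_{ab\in E(H)}(\deg a+\deg b-2)\le e(h-2),
\]
that is, $\pi\le\tfrac32\varepsilon$. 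The same bound follows from $P_2\le\binom e2$ and $e\le h-1$. Then $T_2^2-T_1T_3=4\varepsilon+\varepsilon^2-2\pi\ge\varepsilon+\varepsilon^2$.

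At index $3$ you offer only exploratory strategies, and exact configuration counting is a detour. (Incidentally, $P_3\sqcup K_2$ spans five vertices and cannot enter $i_4(H)$.) The missing idea is that $T_4$ occurs only in $-T_2T_4$ with $T_2>0$, so an upper bound on $b_4=i_4(H)/\binom h4$ suffices. For a uniformly random $4$-set, let $X$ count its induced edges. A forest on four vertices has at most three edges, so $1-b_4=\Pr(X\ge1)\ge\E X/3=2\varepsilon$. Hence $T_4=15-18\varepsilon+4\pi+b_4\le16-20\varepsilon+4\pi$, and
\[
 T_3^2-T_2T_4\ge(8-6\varepsilon+\pi)^2-(4-\varepsilon)(16-20\varepsilon+4\pi)=(4\varepsilon-\pi)^2\ge\tfrac{25}{4}\varepsilon^2 .
\]
This is strictly positive when $H$ has an edge.

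Your equality discussion also needs sharpening. The forest $hK_2\sqcup sK_1$ with $h,s\ge1$ is strict. Before the isolates are added, $r_j=2^j$ is strictly increasing, so the lemma's bound $(r_j-r_{j-1})^2/(n+1)^2$ is positive. More generally, the $k=1$ term makes the normalized coefficients strictly increase whenever $h>0$. Equality therefore occurs only for $F=sK_1$ and $F=hK_2$.
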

\begin{proof}
First let $F=H\circ K_1$ with $h=|V(H)|$ and $H$ a forest.
Write $e=|E(H)|$, $S=\sum_v\binom{\deg(v)}2$, and
\[
 b_k=\frac{i_k(H)}{\binom hk},\qquad
 T_j=\frac{i_j(F)}{\binom hj}=\sum_k\binom jk b_k.
\]
Orders $h\le1$ have no internal ranks. For the remaining orders,
each expression below is used only when its denominator exists. Put
$u=e/\binom h2$ and $v=S/\binom h3$.
Since $H$ has no triangle,
\[
 b_2=1-u,\qquad b_3=1-3u+v,
 \qquad T_1=2,\quad T_2=4-u,\quad T_3=8-6u+v.
\]
Moreover,
\[
 2S=\sum_{ab\in E(H)}(\deg(a)+\deg(b)-2)\le e(h-2),
 \qquad 0\le v\le\tfrac32u.
\]
Indeed, the neighbor sets of the ends of an edge have no common
vertex, so $\deg(a)+\deg(b)\le h$.
It follows that
\[
 T_1^2-T_0T_2=u,\qquad
 T_2^2-T_1T_3=4u+u^2-2v\ge u+u^2.
\]
For $h\ge4$, choose a uniformly random four-vertex subset and let
$X$ count its induced edges. Every induced subgraph is a forest,
so $0\le X\le3$ and
\[
 1-b_4=\Pr(X\ge1)\ge\frac{\mathbb EX}{3}=2u.
\]
Thus
\[
 T_4=15-18u+4v+b_4\le16-20u+4v.
\]
Since $T_2=4-u>0$,
\begin{align*}
 T_3^2-T_2T_4
 &\ge(8-6u+v)^2-(4-u)(16-20u+4v)\\
 &=(4u-v)^2\ge\tfrac{25}{4}u^2.
\end{align*}
These inequalities are strict when $e>0$. When $e=0$,
$I(F;x)=(1+2x)^h$, so all normalized determinants vanish.

For a general well-covered forest, write
$F=(H\circ K_1)\sqcup sK_1$ and apply the preceding lemma $s$ times.
If $H$ is empty, $I(F;x)=(1+x)^s$ and equality holds.
If $h>0$ and $s>0$, strictness follows from that lemma: before
and after each added isolated vertex, the normalized coefficients
are strictly increasing. Explicitly, at total degree $n\ge h$,
\[
 \frac{[x^j](1+x)^nI(H;x/(1+x))}{\binom nj}
 =\sum_k\binom jk\frac{i_k(H)}{\binom nk},
\]
whose consecutive difference is positive because its $k=1$ term
contributes $h/n>0$. This also proves the asserted equality cases.

Finally, for the star corona $F_h=K_{1,h-1}\circ K_1$,
\[
 I(F_h;x)=(1+x)(1+2x)^{h-1}+x(1+x)^{h-1},\qquad
 T_j=\frac{2^{j-1}(2h-j)+j}{h}.
\]
Direct expansion gives
\[
 T_j^2-T_{j-1}T_{j+1}
 =\frac{2^{2j-2}+1+
  2^{j-2}\bigl(2(3-j)h+j^2-5\bigr)}{h^2}.
\]
For example, at $h=14$ the coefficient triple at ranks $3,4,5$ is
$(2678,14014,53339)$, and the normalized fourth-rank determinant is
$-3/196$. For every fixed $j\ge4$, the displayed numerator is
affine in $h$ with negative slope. It is therefore negative for all sufficiently
large integers $h>j$.
\end{proof}

\begin{remark}
The positive prefix proof for $H\circ K_1$ uses only that every
induced subgraph of $H$ on at most four vertices is a forest.
It therefore applies, with the same strictness, to every base
of girth at least five. Adding isolated vertices remains allowed.
\end{remark}

\subsection{An application to corona \texorpdfstring{$h$}{h}-polynomials}
\label{subsec:corona-h-polynomials}
Write $h_G(t)$ for the $h$-polynomial of the independence complex of
$G$, so that
\[
 h_G(t)=(1-t)^{\alpha(G)}I\!\left(G;\frac{t}{1-t}\right).
\]
Hoang, Pham, and Trung~\cite[Propositions 7.5--7.6 and
Corollary 7.7, pp.~2550343-17--2550343-18]{HPTCorona2026}
give the uniform-attachment corona $h$-transform and its clique specialization.
In particular, for $n=|V(G)|$ and every integer $p\ge1$,
\begin{equation}\label{eq:known-corona-h-transform}
 h_{G\circ K_p}(t)
 =\sum_{k=0}^{\alpha(G)}i_k(G)t^k[1+(p-1)t]^{n-k}.
\end{equation}
This polynomial identity follows by substituting the classical
clique-corona identity into the defining $f/h$ relation; the displayed
sum also specifies the value when $1+(p-1)t=0$. Consequently,
\[
 h_{G\circ K_1}(t)=I(G;t),\qquad
 h_{G\circ K_p}(t)=I(G\circ K_{p-1};t)\quad(p\ge2).
\]
Both identities are direct applications of the published transform.
Combining them with our coefficient bound gives the following consequence.

\begin{corollary}[Quantitative $h$-log-concavity for forest clique coronas]
\label{cor:forest-corona-h}
Let $F$ be a nonempty forest of order $n$, and let $p\ge2$ be an
integer. Write $h_{F\circ K_p}(t)=\sum_{j=0}^n h_jt^j$. Then
\[
 \frac{h_j^2}{h_{j-1}h_{j+1}}>
 \frac{n-j+1}{n-j}
 \frac{(p-1)^2(j+1)}{(p-1)^2j+1}
 \qquad(1\le j<n).
\]
In particular, both $(h_j)_{j=0}^n$ and
$((n-j)!h_j)_{j=0}^n$ are strictly log-concave.
\end{corollary}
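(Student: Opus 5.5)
The plan is to reduce the statement to Corollary~\ref{cor:forest-clique-coronas} through the transform \eqref{eq:known-corona-h-transform}. For $p\ge2$ we have $h_{F\circ K_p}(t)=I(F\circ K_{p-1};t)$, so the $h_j$ are exactly the independence coefficients $i_j(F\circ K_{p-1})$.

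Since $F$ is a nonempty forest of order $n$ and $p-1\ge1$ is an integer, Corollary~\ref{cor:forest-clique-coronas} applies with $H=F$, $h=n$, and clique size $p-1$. It yields
\[
 \frac{h_j^2}{h_{j-1}h_{j+1}}>\frac{n-j+1}{n-j}\frac{(p-1)^2(j+1)}{(p-1)^2j+1}
 \qquad(1\le j<n).
\]
This is the displayed inequality.

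Two bookkeeping points need checking. First, the polynomial should have degree exactly $n$ with all $h_j>0$, so that the ratios make sense. This holds because $\alpha(F\circ K_{p-1})=n$: take one clique vertex per base vertex. Every $i_j$ with $0\le j\le n$ is then positive, since any $j$ of the private cliques can each contribute a vertex. Second, the corollary's index range $1\le j<h$ matches $1\le j<n$.

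For the consequences, note two elementary facts. The factor $(p-1)^2(j+1)/((p-1)^2j+1)$ is at least $1$, with equality when $p=2$. The factor $(n-j+1)/(n-j)$ exceeds $1$. Hence the curvature ratio exceeds $(n-j+1)/(n-j)>1$, which gives strict log-concavity of $(h_j)$. Multiplying $h_j$ by $(n-j)!$ multiplies the ratio by $(n-j)/(n-j+1)$. The normalized ratio is then still strictly greater than $1$, exactly as in the proof of Theorem~\ref{thm:allorder-wc}.

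No step is a real obstacle. The only point needing care is the case $p=2$, where the base clique size is $1$. There the corollary still applies, since $p-1=1\ge1$, and the transferred $h$-polynomial is that of the leaf corona $F\circ K_1$.
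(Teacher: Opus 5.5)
Your proposal is correct and follows the paper's proof: identify $h_{F\circ K_p}$ with $I(F\circ K_{p-1})$, apply Corollary~\ref{cor:forest-clique-coronas} with attachment size $p-1\ge1$, and use that the second factor is at least one. The only difference is cosmetic: you check positivity and degree $n$ through the independent sets of $F\circ K_{p-1}$, while the paper reads them off \eqref{eq:known-corona-h-transform}, and both are valid.
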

\begin{proof}
Use $h_{F\circ K_p}=I(F\circ K_{p-1})$ and
Corollary~\ref{cor:forest-clique-coronas}, with attachment size $p-1$.
The second factor is at least one. All coefficients are positive,
and the degree is $n$, by \eqref{eq:known-corona-h-transform}.
\end{proof}

The restriction $p\ge2$ is essential. If a well-covered forest is
$W=(F\circ K_1)\sqcup sK_1$, its $h$-polynomial is $h_W=I(F)$:
each isolated vertex has $h$-polynomial $1$. Known forest bases with
non-log-concave independence polynomials~\cite{KL26,GalvinFailures}
therefore give well-covered forests with non-log-concave $h$-polynomials,
even though Theorem~\ref{thm:allorder-wc} proves strict log-concavity
of their independence polynomials. The two coefficient sequences must
be distinguished.

\section{Marked roots and fixed boundary interactions}\label{sec:marked-boundaries}
\subsection{The square reserve}
The single-root estimates in this section retain quantitative
variance reserves after marking the root. Clique assembly gives the
adjacent OR-event law, and repulsive soft-edge coupling gives the
absent--absent event law. Lemma~\ref{lem:capacity-transfer} supplies
their canonical versions. All marks remain fixed while the common
activity varies. A size-dependent boundary weight requires a separate
comparison.

Let $W$ be the nonnegative inverse of $w\mapsto we^w$, and put
$\omega=W(e^{-1})$, $\Theta=\tfrac12\exp(3-2\omega)$.
\begin{lemma}[Sharp one-message envelope]\label{lem:sharp-message-envelope}
For $0<t\le1$, set $a=t/(1+t)$ and $w=W(t/e)$. Then
\[
 \max_{0<q\le a}q\log\frac{t(1-q)}q=w.
\]
The maximum is attained uniquely at $q=w/(1+w)$. Moreover,
\[
 \beta(t):=\frac{1+t}{t}W(t/e)\le2\omega.
\]
\end{lemma}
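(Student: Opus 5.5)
We split the lemma into two parts: a one-variable maximization, which gives the first two claims, and a monotonicity argument, which gives the bound on $\beta(t)$.

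\emph{Maximization.} Put $\phi(q)=q\log\frac{t(1-q)}q$ on $(0,a]$. Its derivative is
\[
 \phi'(q)=\log\frac{t(1-q)}q-1-\frac q{1-q}.
\]
Write $x=q/(1-q)$. Then $\log(t/x)=1+x$, which is equivalent to $xe^x=t/e$. So the unique critical point has $x=w=W(t/e)$, that is, $q=w/(1+w)$. The derivative $\phi'$ is strictly decreasing, because both the logarithm term and $-q/(1-q)$ decrease in $q$. Hence this critical point is the unique global maximizer on $(0,1)$.

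We must check that it lies in $(0,a]$, i.e.\ that $w\le t$. This holds because $te^t\ge t/e$ and $W$ is increasing. At the maximizer, $\log(t(1-q)/q)=\log(t/w)=1+w$. Therefore the maximum value is
\[
 \frac{w}{1+w}\,(1+w)=w .
\]

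\emph{Bound on $\beta$.} At $t=1$ we have $\beta(1)=2W(1/e)=2\omega$. So it suffices to show that $\beta$ is nondecreasing on $(0,1]$. Write $\beta=(1+1/t)W(t/e)$ and use $W'(z)=W/(z(1+W))$. This gives
\[
 \frac{d}{dt}\log\beta=-\frac{1}{t(1+t)}+\frac1{t(1+w)} .
\]
This is nonnegative if and only if $w\le t$, which was already established. Hence $\beta\le\beta(1)=2\omega$.

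\emph{Expected difficulty.} The only delicate step is verifying that the maximizer lies in the admissible interval $(0,a]$. Conveniently, the inequality $w\le t$ that settles this is the same one that drives the monotonicity of $\beta$. The rest is routine calculus with the Lambert function.
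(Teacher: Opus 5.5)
Your proof is correct and follows essentially the same route as the paper. Both arguments use strict concavity of $q\log\frac{t(1-q)}{q}$ and the substitution $x=q/(1-q)$, which reduces the critical point to $xe^x=t/e$. Both then use $w<t$ to place the maximizer in $(0,a)$, and differentiate $\beta$ via $W'(z)=W/(z(1+W))$, where the sign is again governed by $w\le t$.
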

\begin{proof}
For $f(q)=q\log(t(1-q)/q)$,
\[
 f'(q)=\log\frac{t(1-q)}q-\frac1{1-q},\qquad
 f''(q)=-\frac1{q(1-q)^2}<0.
\]
Writing $x=q/(1-q)$, the critical-point equation is
$\log(t/x)=1+x$, or $xe^x=t/e$. Hence $x=w$, and
$f(q)=w$ there. Since $w<t$, this critical point lies in $(0,a)$.
The endpoint values are $\lim_{q\downarrow0}f(q)=f(a)=0$.

Implicit differentiation of $we^w=t/e$ gives
$w'=w/[t(1+w)]$. Consequently
\[
 \beta'(t)=\frac{w(t-w)}{t^2(1+w)}>0,
\]
so $\beta(t)\le\beta(1)=2\omega$.
\end{proof}

Fix a common activity $0<t\le1$, and put $a=t/(1+t)$.
For a rooted tree $(T,o)$, assign the root the fixed multiplier
$0<\theta\le\Theta$, with every other multiplier equal to one. Let
$q=\Pr(o\in S)$, let
\[
 d=\mathbb E[|S|\mid o\in S]-\mathbb E[|S|\mid o\notin S],
 \qquad g=\mathbb E|S|-\operatorname{Var}|S|.
\]
The multiplier $\theta$ is fixed when varying $t$.

\begin{theorem}[Improved marked square reserve]\label{thm:marked-square-reserve}
Let $(T,o)$ be a nonempty rooted tree. Give $o$ a fixed multiplier
$0<\theta\le\Theta$, give every other vertex multiplier one, and use
common activity $0<t\le1$. With $q,d,g$ as above,
\[
 g\ge q^2d^2.
\]
Equality holds for a singleton, and the inequality is strict whenever
$T$ has more than one vertex. The endpoint $\theta=\Theta,t=1$ is
included.
\end{theorem}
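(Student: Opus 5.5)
The plan is to rerun the rooted induction of Lemma~\ref{lem:forest-subpoisson}, but only one level deep. The children of the marked root $o$ are unmarked trees at common activity $t\le1$, so Lemma~\ref{lem:forest-subpoisson} applies to them and to their root-deleted forests, and no new induction is needed.

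For the singleton, $q=\theta t/(1+\theta t)$ and $d=1$. The law of $|S|$ is Bernoulli$(q)$, so $g=q-q(1-q)=q^2=q^2d^2$, which gives equality.

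For a larger tree, let $T_i$ be the child trees with quantities $q_i,d_i,g_i,g_i^-,L_i$ as before. The conditional-mixture identities \eqref{eq:subpoisson-mixture} hold verbatim, except that the root odds become
\[
 \frac q{1-q}=\theta t\prod_i(1-q_i).
\]
Thus
\[
 g=s\sum_ig_i+q\sum_ig_i^-+q-qsd^2,\qquad d=1-u,\qquad s=1-q .
\]
The bounds \eqref{eq:subpoisson-child-bounds} come from Lemma~\ref{lem:forest-subpoisson} applied to each $T_i$ and its root-deleted forest. Subtracting $q^2d^2$ then reduces the claim to
\[
 as\sum_iq_id_i^2+aq\sum_i\frac{(1-d_i)^2}{L_i}+q-q(s+q)(1-u)^2>0 .
\]
The key simplification is $s+q=1$, so the coefficient $b=qC$ of Lemma~\ref{lem:rooted-scalar-positivity} is replaced by $b=q$. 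The message constraints $0<q_i\le a$ and $L_i\ge0$ are unchanged, since they concern unmarked children.

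I then repeat the scalar argument of Lemma~\ref{lem:rooted-scalar-positivity} with $C$ replaced by $1$. The same weighted Cauchy--Schwarz bound \eqref{eq:subpoisson-square-bound} applies, with the same $m,V,H$, and completing the square in $u$ reduces everything to
\[
 F=(q+H)(1-qV)-q(1-m)^2>0 .
\]
Expanding gives the analogue of \eqref{eq:subpoisson-expanded-F}, and I will check that the coefficient of $m$ is positive. Then I will lower-bound $m$ using the sharp envelope of Lemma~\ref{lem:sharp-message-envelope}, namely $q_iL_i\le w=W(t/e)$, instead of the cruder $t/e$. This gives $m\ge Sq/(q+sw)$ and reduces $F>0$ to a positivity statement for an explicit function $G(S,q,t)$. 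The remaining relations are
\[
 cL\le S\le L,\qquad \frac q{1-q}=\theta te^{-L} .
\]

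The main obstacle is this final scalar inequality. Since $\theta$ may exceed $1$, the root probability $q$ can be close to $1$, so the earlier bound $q\le a$ is lost. The case split $S\le\delta$ versus $S>\delta$ must therefore be redone, with $q$ written in terms of $\theta te^{-L}$.

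First I will check monotonicity, aiming to show that the worst case is $\theta$ maximal and $t=1$. Increasing $\theta$ increases $q$, and $\beta(t)=(1+t)W(t/e)/t\le2\omega$ should control the $t$-dependence. The residual one-variable condition in $S$ should then be a tangency condition whose critical mark is exactly $\theta=\tfrac12e^{3-2\omega}$. This is consistent with the worst-case $S$ being near $3-2\omega$, where the factor $e^{-S}$ from the odds relation meets the linear term with slope involving $2\omega$.

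I expect the boundary case $\theta=\Theta$, $t=1$ to yield $G\ge0$ with equality only in a limiting configuration that no finite tree attains. Strictness for a nonsingleton tree would then come from the strict Lemma~\ref{lem:forest-subpoisson} reserves of the children, or from the strict Cauchy--Schwarz step, which keeps the inequality strict.
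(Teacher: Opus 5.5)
Your reduction is the paper's own. The one-level rerun with unmarked children, the target
$Q=as\sum_iq_id_i^2+aq\sum_i(1-d_i)^2/L_i+q-q(1-u)^2$, the determinant $F=(q+asm)(1-qV)-q(1-m)^2$, and the envelope bound $m\ge Sq/(q+sw)$ all match the paper. However, the proposal stops exactly where the proof has content. You never check that the coefficient of $m$ is positive, which is needed before the lower bound for $m$ may be substituted. The final inequality is replaced by a heuristic about tangency.

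\textbf{The missing step.} It is short once you normalize by $as$ and put
\[
 x=\frac{q}{as}=\theta(1+t)\prod_i(1-q_i)=\theta(1+t)e^{-L}<\theta(1+t)e^{-S}.
\]
Expansion gives $F=-q^2S/(as)+m\,as\,\bigl[1+x(2-S)+x^2\bigr]$.

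\emph{Coefficient of $m$.} For $S>2$ the bracket exceeds $1-\theta(1+t)(S-2)e^{-S}\ge1-2\Theta e^{-3}=1-e^{-2\omega}>0$.

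\emph{Substitution.} Substituting $m\ge Sq/(q+sw)$ then gives
\[
 F\ge\frac{Sq}{q+sw}\,as\,\bigl[1-x(S-\delta)\bigr],\qquad \delta=2-\beta(t).
\]

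\emph{Conclusion.} For $S\le\delta$ the bracket is at least one. For $S>\delta$, using $\max_{S\ge\delta}(S-\delta)e^{-S}=e^{-\delta-1}$ and $\beta(t)\le2\omega$ from Lemma~\ref{lem:sharp-message-envelope}, one gets $x(S-\delta)<\theta(1+t)e^{\beta(t)-3}\le2\Theta e^{2\omega-3}=1$.

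Your worry about losing $q\le a$ is unfounded. Because the target coefficient is $C=1$, the term $q(q-a)$ of the unmarked expansion is absent. The variable $q$ enters only through $x$, which the odds relation controls directly. So neither a case analysis in $q$ nor a monotonicity argument in $\theta$ is needed.

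\textbf{Strictness.} Your account of where strictness comes from fails in an essential case. Take a star with marked center. Every child is a singleton, so $g_i=aq_id_i^2$ and $g_i^-=0$ hold with equality. Moreover $V=0$, so the square completion is exact and $g-q^2d^2=F$ identically. Neither the children's strict reserves nor the Cauchy--Schwarz step supplies any slack there.

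At the endpoint $\theta=\Theta$, $t=1$, strictness must therefore come from the scalar chain itself. The source is the inequality $\prod_i(1-q_i)<e^{-S}$, which is strict because there is at least one child and every $q_i>0$. In the star case one could alternatively use $m=S>Sq/(q+sw)$.
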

\begin{proof}
The singleton case is Bernoulli: $d=1$ and $g=q^2$.
Otherwise let the unweighted child trees have data $q_i,d_i,g_i$,
and let $g_i^-$ be their root-deleted deficits. Put
\[
 L_i=\log\frac{t(1-q_i)}{q_i},\quad
 s=1-q,\quad S=\sum_iq_i>0,\quad u=\sum_iq_id_i.
\]
Lemma~\ref{lem:forest-subpoisson} and Cauchy--Schwarz on the grandchildren
give
\[
 g_i\ge aq_id_i^2,\qquad
 g_i^-\ge a\frac{(1-d_i)^2}{L_i}.
\]
When $L_i=0$, the child is a singleton and $d_i=1$; its last term is
interpreted as zero subject to this exact constraint. The root odds
and the conditional-variance identity give
\[
 \frac qs=\theta t\prod_i(1-q_i),\quad d=1-u,\qquad
 g=s\sum_i g_i+q\sum_i g_i^-+q-qs(1-u)^2.
\]
It is therefore enough to prove positivity of
\[
 Q=as\sum_iq_id_i^2+
 aq\sum_{i:L_i>0}\frac{(1-d_i)^2}{L_i}+q-q(1-u)^2.
\]
Set
\[
 m=\sum_i\frac{q_iq}{q+s q_iL_i},\qquad
 V=\frac{S-m}{as}.
\]
Completing squares and applying weighted Cauchy--Schwarz yields
\[
 as\sum_iq_id_i^2+
 aq\sum_{i:L_i>0}\frac{(1-d_i)^2}{L_i}
 \ge as m+\frac{(u-m)^2}{V}
\]
if $V>0$. If $V=0$, then $u=m=S$ and the left side equals $as m$.
For completeness, for $L_i>0$ take
$A_i=a(sq_i+q/L_i)$ and $B_i=aq/L_i$. The identities are
\[
 \sum_i\frac{q_iB_i}{A_i}=m,\quad
 \sum_i\frac{q_i^2}{A_i}=V,\quad
 \sum_i\left(B_i-\frac{B_i^2}{A_i}\right)=as m.
\]
An index with $L_i=0$ contributes $q_i,0,asq_i$, respectively.

It suffices to show
\[
 F=(q+as m)(1-qV)-q(1-m)^2>0.
\]
Indeed this implies $1-qV>0$, and the minimum of the resulting
quadratic in $u$ is $F/(1-qV)>0$ when $V>0$; for $V=0$ the
constrained value is $F$.
Direct expansion gives
\[
 F=-\frac{q^2S}{as}
   +m\left[as+q(2-S)+\frac{q^2}{as}\right].
\]
By Lemma~\ref{lem:sharp-message-envelope},
\[
 q_iL_i\le w:=W(t/e),\qquad
 m\ge\frac{Sq}{q+sw}.
\]
Writing $x=q/(as)$, the root odds give the strict inequality
\[
 x=\theta(1+t)\prod_i(1-q_i)
   <\theta(1+t)e^{-S},
\]
because there is at least one child and each $q_i>0$.
The coefficient of $m$, divided by $as$, is
$1+x(2-S)+x^2$. For $S\le2$ it is positive. For $S>2$, it exceeds
\[
 1-2\Theta e^{-S}(S-2)
 \ge1-2\Theta e^{-3}=1-e^{-2\omega}>0.
\]
The lower bound for $m$ can therefore be substituted, yielding
\[
 F\ge\frac{Sq}{q+sw}
 \left[as+q\left(2-S-\frac wa\right)\right].
\]
Put $\delta=2-w/a=2-\beta(t)>0$. If $S\le\delta$, the bracket is
positive. If $S>\delta$, its value divided by $as$ is
\begin{align*}
 1-x(S-\delta)
 &>1-\theta(1+t)e^{-S}(S-\delta)\\
 &\ge1-\theta(1+t)e^{-\delta-1}\\
 &=1-\theta(1+t)e^{-3+\beta(t)}\\
 &\ge1-2\Theta e^{-3+2\omega}=0.
\end{align*}
This uses $\max_{S\ge\delta}(S-\delta)e^{-S}=e^{-\delta-1}$
and $\beta(t)\le2\omega$. The first inequality remains strict even
at $\theta=\Theta,t=1$. Hence $F>0$, so $g-q^2d^2\ge Q>0$.
\end{proof}

\paragraph{Elementary bounds on the constant.}
One has $5<\Theta<6$. To see the lower bound, $e^{13/10}>10/3$
follows from the first four exponential-series terms, so
$(3/10)e^{3/10}>e^{-1}$ and $\omega<3/10$. Also
\[
 e^{12/5}>\sum_{k=0}^5\frac{(12/5)^k}{k!}
 =\frac{166093}{15625}>10.
\]
Therefore $\Theta>e^{12/5}/2>5$.
For the upper bound use $e<11/4$ and, for $0<x<3$,
$e^x\le1+x+x^2/[2(1-x/3)]$. At $x=27/100$ these give
\[
 \frac{27}{100}e^{127/100}
 <\frac{27}{100}\frac{11}{4}
   \left(1+\frac{27}{100}
       +\frac{(27/100)^2}{2(1-9/100)}\right)<1,
\]
so $\omega>27/100$. Consequently
\[
 2\Theta<e^{123/50}
 <\frac{121}{16}\left(1+\frac{23}{50}
       +\frac{(23/50)^2}{2(1-23/150)}\right)
 =\frac{2435609}{203200}<12.
\]

\begin{corollary}[Unrestricted mark at a low-degree root]\label{cor:low-degree-mark}
Let $(T,o)$ be a nonempty rooted tree with $\deg_T(o)\le4$.
Give the root an arbitrary fixed multiplier $\theta>0$, give all other
vertices multiplier one, and use common activity $0<t\le1$.
Write $q$ for root occupation probability, $d$ for the occupied-minus-
unoccupied conditional mean size, and $g=\mathbb E K-\operatorname{Var}K$.
Then $g\ge q^2d^2$, with strict inequality unless $T$ is a singleton.
\end{corollary}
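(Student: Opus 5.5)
The proof will reuse the proof of Theorem~\ref{thm:marked-square-reserve} verbatim up to the reduction to $F>0$. The child subtrees are unmarked, so Lemma~\ref{lem:forest-subpoisson} still gives $g_i\ge aq_id_i^2$ and $g_i^-\ge a(1-d_i)^2/L_i$. The conditional-variance identity for $g$ and the completed-square reduction never use the size of $\theta$. Hence it suffices to prove
\[
 F=-\frac{q^2S}{as}+m\Bigl[as+q(2-S)+\frac{q^2}{as}\Bigr]>0 .
\]
The mark enters only through $x=q/(as)=\theta(1+t)\prod_i(1-q_i)$. Since $\theta>0$ is now arbitrary, $x$ ranges over all of $(0,\infty)$, and the task becomes a positivity statement uniform in $x$. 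Dividing by $as$, I would rewrite it as
\[
 m\bigl(1+x(2-S)+x^2\bigr)>x^2S .
\]

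The degree hypothesis is what replaces the bound on $\theta$. With $k=\deg_T(o)\le4$ children and $0<q_i\le a\le1/2$ (from the children's odds relations), we get $S\le ka\le2$. Consequently the coefficient $1+x(2-S)+x^2$ of $m$ is positive for every $x>0$, with no exponential estimate needed. The remaining inequality should be handled by writing $m=\sum_i q_i\,\frac{q}{q+sq_iL_i}$ and using the exact identity
\[
 S-m=\sum_i\frac{q_i\,sq_iL_i}{q+sq_iL_i}\le\frac{s}{q}\sum_i q_i^2L_i .
\]
This gives $x^2(S-m)\le x\sum_iq_i^2L_i/a$. So it suffices to show
\[
 m\bigl(1+x(2-S)\bigr)+x^2m\ge x^2m+\frac xa\sum_iq_i^2L_i ,
\]
that is, $m+x\bigl[m(2-S)-a^{-1}\sum_iq_i^2L_i\bigr]>0$. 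Because $m>0$, the linear-in-$x$ form is positive for all $x>0$ once the bracket is nonnegative, and strictness then comes from the constant term $m>0$.

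The main obstacle is this bracket, a purely per-child scalar inequality. Since $m$ itself depends on $x$ (through $q$), I would first try the cruder sufficient condition in which $m$ is replaced by its value as $x\to\infty$ (i.e. $q\to1$, where $m\to S$). I would then check separately that the finite-$x$ correction only helps, using monotonicity of $q/(q+sq_iL_i)$ in $q$.

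The target is
\[
 \sum_i\Bigl(q_i(2-S)-\frac{q_i^2L_i}{a}\Bigr)\ge0\qquad(k\le4,\ 0<q_i\le a,\ 0<t\le1).
\]
The expected difficulty is the corner $t=1$, $k=4$, where $S$ approaches $2$. A quick check shows a crude per-child bound such as $q_i+q_i^2L_i/a\le1/2$ fails slightly near $q_i\approx0.4$. So I would not split the sum termwise. Instead I would use the exact form of $m$ with $q<1$ retained, keeping the factor $q/(q+sq_iL_i)$ rather than $1$, and exploit that $q_i^2L_i\to0$ as $q_i\to a$ where $S$ is largest.

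Concretely, I would reduce to the symmetric case by convexity in each $q_i$, then to a two-variable inequality in $(q_1,t)$ with $k=4$. That inequality would be verified by elementary calculus, using Lemma~\ref{lem:sharp-message-envelope} for $q_iL_i\le W(t/e)$ in the region where $S$ is bounded away from $2$. Strictness for non-singleton $T$ follows as in Theorem~\ref{thm:marked-square-reserve}, since $Q>0$ there.
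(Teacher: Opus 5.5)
Your reduction to $F>0$, the observation that the mark enters only through $x=q/(as)\in(0,\infty)$, and the use of $S\le ka\le2$ to make the coefficient of $m$ positive all match the paper. The gap is in how you then treat $m$. Since $m=\sum_i q_i\,x/(x+q_iL_i/a)$ is increasing in $x$ with $m\le S$, replacing $m$ by its $x\to\infty$ value $S$ in the bracket $m(2-S)-a^{-1}\sum_iq_i^2L_i$ gives an \emph{upper} bound for the bracket, so it is not a sufficient condition. The monotonicity of $q/(q+sq_iL_i)$ in $q$ shows that the finite-$x$ correction hurts rather than helps.

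In fact the bracket is genuinely negative for small marks. As $x\to0$ (i.e.\ $\theta\to0$, which is allowed), $m\to\sum_{i:L_i=0}q_i$. So if no child is a leaf, the bracket tends to $-a^{-1}\sum_iq_i^2L_i<0$. For example, take $t=1$ and a single child that is a rooted $K_2$. Then $q_1=1/3$ and $L_1=\log 2$, and at $x=1/10$ (i.e.\ $\theta=3/40$) the bracket is about $-0.055$. Hence no argument that makes the bracket nonnegative can succeed, with or without retaining the exact factor $q/(q+sq_iL_i)$. Positivity at small $x$ must come from the constant term $m$, and that requires a lower bound on $m$ of order $x$.

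The missing idea is the paper's Jensen bound. By convexity of $y\mapsto1/(x+y)$ with weights $q_i/S$,
\[
 m\ge\frac{Sx}{x+Z},\qquad Z=\frac{\sum_iq_i^2L_i}{aS}.
\]
Substituting this directly into $F/(as)=m[1+x(2-S)+x^2]-x^2S$ gives $F/(as)\ge\frac{Sx}{x+Z}[1+x(2-S-Z)]$. Everything therefore reduces to $Z\le2-S$, which is exactly your target inequality. Your lossy bound $S-m\le(s/q)\sum_iq_i^2L_i$ can be rescued the same way. It yields $\frac{Sx}{x+Z}[1-Z^2+x(2-S-Z)]$, which additionally needs $Z\le(1+t)/e<1$; that follows from $q_iL_i\le t/e$.

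The target is also not the hard part you anticipate. From $\log y\le y-1$ one gets $q_i^2L_i\le tq_i-(1+t)q_i^2$, and hence
\[
 \sum_iq_i^2L_i\le\frac{aS-\sum_iq_i^2}{1-a}\le\frac{aS-S^2/4}{1-a}\le aS(2-S).
\]
Here $k\le4$ enters through $\sum_iq_i^2\ge S^2/4$, and the last step is equivalent to $1-2a+S(2a-1)^2/(4a)\ge0$. The termwise failure you observed is irrelevant, since only the summed bound is needed. Your symmetrization and two-variable calculus are therefore unnecessary. The convexity reduction is also unjustified as stated: $q\mapsto q^2\log(t(1-q)/q)$ is convex near $0$, so a symmetric point need not be extremal.
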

\begin{proof}
The singleton assertion is the Bernoulli identity. In the nonsingleton
case use the proof and notation of Theorem~\ref{thm:marked-square-reserve}, but impose no upper bound on $\theta$. Put
$a=t/(1+t)$, $s=1-q$, $S=\sum_iq_i$, and
\[
 m=\sum_i\frac{q_iq}{q+s q_iL_i},\qquad
 V=\frac{S-m}{as},\qquad x=\frac{q}{as},\qquad
 Z=\frac{\sum_iq_i^2L_i}{aS}.
\]
Here $L_i=\log[t(1-q_i)/q_i]\ge0$, the number of children is at most
four, and $0<q_i\le a\le1/2$. In particular $0<S\le4a\le2$.
Since $y\mapsto1/(x+y)$ is convex, weighted Jensen gives
\[
 m=\sum_iq_i\frac{x}{x+q_iL_i/a}\ge\frac{Sx}{x+Z}.
\]
The elementary bound $\log y\le y-1$ yields
\[
 \sum_iq_i^2L_i
 \le\frac{aS-\sum_iq_i^2}{1-a}
 \le\frac{aS-S^2/4}{1-a}
 \le aS(2-S).
\]
For the last inequality, subtract its left-hand numerator from
$aS(2-S)(1-a)$ and divide by $aS>0$. The result is
\[
 1-2a+S\left(a-1+\frac1{4a}\right)
 =1-2a+S\frac{(2a-1)^2}{4a}\ge0.
\]
Thus $Z\le2-S$. The completed-square determinant is
\[
 \frac{F}{as}=-x^2S+m[1+x(2-S)+x^2].
\]
Its coefficient of $m$ is positive because $S\le2$, so
\[
 \frac{F}{as}\ge\frac{Sx}{x+Z}
       \bigl[1+x(2-S-Z)\bigr]>0.
\]
The completed-square argument in Theorem~\ref{thm:marked-square-reserve}
now gives $g-q^2d^2\ge Q>0$: if $V>0$, the minimum is
$F/(1-qV)>0$, while for $V=0$ the constrained value is $F$. No upper bound on the marked root's odds was
used. The cases $L_i=0$ retain the exact constraint $d_i=1$ from the
inherited proof and are included in the displayed formulas.
\end{proof}

\paragraph{Sharpness of the degree threshold.}
For the rooted star $K_{1,D}$ with its center marked by $\theta$, at
$t=1$ one has $q=\theta/(\theta+2^D)$ and $d=1-D/2$. Direct
conditioning on the center gives
\[
 g-q^2d^2=\frac D4\bigl(1-(D-3)q\bigr).
\]
At $D=5$ and $\theta=33$, this is $-1/52<0$. Thus four is the
largest universal root-degree threshold for the unrestricted-mark
square reserve. This example concerns the reserve, not full strict
LC of a one-cut forest.

\subsection{A stronger reserve for marks through 15/14}
The following extends the unweighted reserve $aqd^2$, rather than only
the weaker marked reserve $q^2d^2$. All multipliers remain fixed as
the common activity varies.

\begin{theorem}[Strong marked-root reserve]\label{thm:strong-marked-reserve}
Let $(T,o)$ be a nonempty rooted tree. Give $o$ a fixed multiplier
$0<\theta\le15/14$ and every other vertex multiplier one. At common
activity $0<t\le1$, put
\[
 a=\frac{t}{1+t},\qquad h=\frac{\theta t}{1+\theta t},\qquad
 q=\Pr(o\in S),\qquad
 d=\mathbb E[K\mid o\in S]-\mathbb E[K\mid o\notin S],
\]
where $K=|S|$, and put $g=\mathbb EK-\operatorname{Var}K$. Then
\[
 g\ge hq d^2.
\]
Equality holds for a singleton, and the inequality is strict for every
larger tree. Both endpoints $\theta=15/14$ and $t=1$ are included.
Independent unweighted forest components may be added: the same root
reserve continues to hold, and any nonempty added component contributes
a strictly positive deficit.
\end{theorem}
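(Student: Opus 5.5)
We follow the template of Theorem~\ref{thm:marked-square-reserve}, with the target reserve $hqd^2$ in place of $q^2d^2$.

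\emph{Base case.} For a singleton the law is Bernoulli with $q=h$ and $d=1$. Hence $g=q(1-q)\cdot0+q-q(1-q)=q^2=hqd^2$, which is the asserted equality.

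\emph{Reduction to a quadratic.} Otherwise the children are unweighted rooted trees. For them, Lemma~\ref{lem:forest-subpoisson} supplies
\[
g_i\ge aq_id_i^2,\qquad g_i^-\ge a(1-d_i)^2/L_i,
\]
with the usual constraint $d_i=1$ when $L_i=0$. The mixture identity $g=s\sum g_i+q\sum g_i^-+q-qsd^2$ still holds, now with root odds $q/s=\theta t\prod_i(1-q_i)$. Subtracting $hqd^2$, it suffices to prove
\[
Q_h=as\sum_iq_id_i^2+aq\sum_{i:L_i>0}\frac{(1-d_i)^2}{L_i}+q-q(s+h)(1-u)^2>0.
\]
This is exactly the quadratic of Lemma~\ref{lem:rooted-scalar-positivity}, except that $C=s+a$ becomes $C_h=s+h$ and the root odds are multiplied by $\theta$.

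\emph{Determinant reduction.} We complete squares identically, with the same $m$, $V$ and $H=asm$. This reduces the claim to
\[
F=(q+H)(1-qC_hV)-qC_h(1-m)^2>0,
\]
whose expansion is \eqref{eq:subpoisson-expanded-F} with $C$ replaced by $C_h$ and the term $q(q-a)$ altered accordingly. We then use the sharp envelope $q_iL_i\le w=W(t/e)$ from Lemma~\ref{lem:sharp-message-envelope} to substitute $m\ge Sq/(q+sw)$. Before doing so we must check that the coefficient of $m$ stays positive. With $R=qC_h/(as)$, the odds give
\[
R=\theta(1+t)C_h\prod_i(1-q_i)\le\theta(1+t)(1+h)e^{-S}.
\]
For $\theta\le15/14$ and $t\le1$, this constant is at most about $(15/7)(1+15/29)\approx3.25$, so $1-R(S-2)\ge 1-3.25e^{-3}>0$ is safe.

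\emph{Case split.} After substitution we obtain a $G$ analogous to \eqref{eq:subpoisson-G-normalized}, namely
\[
G/A=S-(\text{loss})+RS(\delta-S),\qquad \delta=2-\beta(t)\ge2-2\omega.
\]
The loss term is $(1-e^{-L})(\cdots)$ plus a contribution from $h-a$, since $q$ may now exceed $a$ slightly. We then split on $S$ as in Lemma~\ref{lem:rooted-scalar-positivity}:
\begin{itemize}
\item For $S\le\delta$, we need $1-(\text{loss})/S>0$. This uses $1-e^{-L}\le L\le S/c$ and $q\le h$.
\item For $S>\delta$, we need the loss ratio $/\delta$ plus $\theta(1+t)(1+h)e^{\beta-3}$ to be less than $1$.
\end{itemize}

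\emph{Expected obstacle.} The main difficulty is the numerical bookkeeping at the extreme $t=1$, $\theta=15/14$. There $h=15/29$, and the margins of Lemma~\ref{lem:rooted-scalar-positivity} (which had $9/320$ slack) shrink. The threshold $15/14$ is presumably where one of these two inequalities becomes tight, so the crude bounds must be sharpened. The natural first step is to use the sharp $W$ envelope and $\beta\le2\omega$ throughout. One should also keep the exact relation $q=\theta(1+t)e^{-L}as/(\cdots)$ rather than bound $q$ by $h$, and check monotonicity in $t$ so that only $t=1$ needs verification. Rational bounds on $e$ and $\omega$, as in the paragraph on the constant $\Theta$, would then close the estimate.

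\emph{Added components.} For independent unweighted components, means and variances add, while $q$ and $d$ are unchanged because $d$ is a difference of conditional means and the extra components cancel. The extra deficit is strictly positive by Lemma~\ref{lem:forest-subpoisson}.
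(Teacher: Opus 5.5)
Your reduction matches the paper's proof step for step. You use the same mixture identity with $C=s+h$, the same square completion and determinant $F$ (whose constant term is indeed $q(q-h)$), the same envelope $m\ge Sq/(q+sw)$ with $w=W(t/e)$, the same positivity check for the coefficient of $m$, and the same split at $S=\delta$.

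What you leave open is the only step specific to this theorem: checking that the two scalar inequalities hold up to $\theta=15/14$. Until that is done, the theorem is not proved. Your diagnosis of this step is also off. You do not need sharper estimates, the exact $q$-relation, or monotonicity in $t$; bounding $q$ by $h$ is fine.

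The one structural point to pin down is the loss term. The marked odds $q/s=\theta te^{-L}$ and $h/(1-h)=\theta t$ give the exact identity $h-q=hs(1-e^{-L})$. Hence the loss in $G/(as)$ is exactly $(h/a)(1-e^{-L})(q+sw)$, the unmarked loss multiplied by $h/a$. Keep it in this factored form. If you instead write $q-h=(q-a)+(a-h)$ and bound the pieces separately, the $(a-h)$ piece has no factor $1-e^{-L}$ and does not vanish as $S\to0$, so the case $S\le\delta$ breaks.

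With the loss in that form, uniform bounds close both cases:
\begin{itemize}
\item $h/a=\theta(1+t)/(1+\theta t)\le\max\{1,\theta\}\le15/14$;
\item $q+sw\le h+(1-h)w<96/145$, from $q\le h\le15/29$ and $w\le\omega<3/10$;
\item $1/c=-\log(1-a)/a\le2\log2<7/5$;
\item $\delta>7/5$ and $R<4e^{-L}$.
\end{itemize}
For $S\le\delta$ these give $G/(as)>S\bigl(1-\tfrac{15}{14}\cdot\tfrac{96}{145}\cdot\tfrac75\bigr)=S/145$.

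For $S>\delta$, use $1-e^{-L}<1$ and $S>\delta$. The loss ratio is then below $\tfrac{15}{14}\cdot\tfrac{96}{145}\cdot\tfrac57=\tfrac{720}{1421}$, and $R(S-\delta)<4e^{-\delta-1}<4e^{-12/5}<\tfrac25$ since $e^{12/5}>10$. Together these give $G/(as)>\tfrac{663}{7105}S$.

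So $15/14$ is simply where these crude constants nearly exhaust the case $S\le\delta$: their product is $144/145$. It is an artifact of the estimates, not a point where sharper bounds become necessary. Adding the identity and these constants completes your argument.
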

\begin{proof}
For a singleton, $q=h$, $d=1$ and $g=h^2$. Otherwise write the
unweighted child-root data as $q_i,d_i,g_i$, with root-deleted deficits
$g_i^-$. Put
\[
 s=1-q,\quad S=\sum_iq_i>0,\quad u=\sum_iq_id_i,\quad
 L_i=\log\frac{t(1-q_i)}{q_i}.
\]
Lemma~\ref{lem:forest-subpoisson} and its deletion bound give
\[
 0<q_i\le a,\qquad g_i\ge aq_id_i^2,\qquad
 g_i^-\ge a\frac{(1-d_i)^2}{L_i}.
\]
When $L_i=0$, the child is a singleton and $d_i=1$; the quotient term
is zero subject to this exact constraint. The root odds and conditional
mixture identity are
\[
 \frac qs=\theta t\prod_i(1-q_i),\quad d=1-u,\qquad
 g=s\sum_i g_i+q\sum_i g_i^-+q-qs(1-u)^2.
\]
Thus, writing $C=s+h$, it is enough to prove positivity of
\[
 Q=as\sum_iq_id_i^2+aq\sum_{i:L_i>0}\frac{(1-d_i)^2}{L_i}
       +q-qC(1-u)^2.
\]

Set
\[
 m=\sum_i\frac{q_iq}{q+s q_iL_i},\qquad V=\frac{S-m}{as}.
\]
The same square completion used in Lemma~\ref{lem:rooted-scalar-positivity} gives
\[
 as\sum_iq_id_i^2+aq\sum_{i:L_i>0}\frac{(1-d_i)^2}{L_i}
 \ge as m+\frac{(u-m)^2}{V}
\]
when $V>0$. When $V=0$, $u=m=S$ and the left side is $as m$.
For completeness, on an index with $L_i>0$, take
$A_i=a(sq_i+q/L_i)$ and $B_i=aq/L_i$. Then
\[
 \sum_i\frac{q_iB_i}{A_i}=m,\quad
 \sum_i\frac{q_i^2}{A_i}=V,\quad
 \sum_i\left(B_i-\frac{B_i^2}{A_i}\right)=as m.
\]
An index with $L_i=0$ contributes $q_i,0,asq_i$, respectively.
Define the determinant
\[
 F=(q+as m)(1-qCV)-qC(1-m)^2.
\]
If $F>0$, then $1-qCV>0$. Completing the remaining square in $u$
proves $Q\ge F/(1-qCV)>0$ when $V>0$; when $V=0$, $Q=F>0$.
Direct expansion gives
\[
 F=q(q-h)-\frac{q^2CS}{as}
    +m\left[as+qC\left(2+\frac q{as}-S\right)\right].       \tag{S1}
\]

Use the sharp envelope $w=W(t/e)$, from Lemma~\ref{lem:sharp-message-envelope}: $q_iL_i\le w$, and
\[
 \beta:=w/a\le2\omega<3/5,\qquad \omega=W(1/e)<3/10.
\]
Consequently
\[
 m\ge\frac{Sq}{q+sw},\qquad \delta:=2-\beta>7/5.
\]
Put
\[
 L=\sum_i-\log(1-q_i),\qquad c=\frac a{\log(1+t)},\qquad
 R=\frac{qC}{as}.
\]
Since $-\log(1-x)/x$ is increasing and $q_i\le a$, one has
$cL\le S\le L$. The marked root odds give
\[
 h-q=hs(1-e^{-L}),\qquad
 R=\theta(1+t)(s+h)e^{-L}<4e^{-L}.                  \tag{S2}
\]
For the last inequality, $h\le15/29$ and
$\theta(1+t)(s+h)\le(15/14)\,2\,(1+15/29)=660/203<4$.

The coefficient of $m$ in \textup{(S1)} is positive. Divided by $as$, it is
$1+R(2+q/(as)-S)$, positive immediately if $S\le2$. If $S>2$,
\textup{(S2)}, $L\ge S$, and $(S-2)e^{-S}\le e^{-3}$ show that it is
strictly greater than $1-4e^{-3}>1/2$, using $e>2$.
We may therefore substitute the lower bound on $m$ into \textup{(S1)}, obtaining
\[
 F\ge\frac q{q+sw}G,
\]
\[
 G=(q-h)(q+sw)+S\left[as+qC\left(2-S-\frac wa\right)\right].
\]
Using \textup{(S2)} gives the normalized expression
\[
 \frac G{as}=S-\frac ha(1-e^{-L})(q+sw)
                   +RS(\delta-S).                         \tag{S3}
\]
The following uniform bounds are sufficient:
\[
 \frac ha\le\frac{15}{14},\qquad
 q+sw\le h+(1-h)w<\frac{96}{145},\qquad
 \frac1c\le2\log2<\frac75.                                 \tag{S4}
\]
Indeed, $h/a=\theta(1+t)/(1+\theta t)\le\max\{1,\theta\}$;
$q\le h\le15/29$ and $w<3/10$ give the middle bound; the last
uses monotonicity of $-\log(1-a)/a$ on $0<a\le1/2$.

If $S\le\delta$, the last term of \textup{(S3)} is nonnegative. Using
$1-e^{-L}\le L\le S/c$ and \textup{(S4)},
\[
 \frac G{as}>S\left(1-\frac{15}{14}\frac{96}{145}\frac75\right)
             =\frac S{145}>0.
\]
If $S>\delta$, then
\[
 \frac{(h/a)(1-e^{-L})(q+sw)}S
 <\frac{15}{14}\frac{96}{145}\frac57=\frac{720}{1421}.
\]
Also, by \textup{(S2)} and $L\ge S$,
\[
 R(S-\delta)<4e^{-S}(S-\delta)
 \le4e^{-\delta-1}<4e^{-12/5}<\frac25.
\]
The last inequality uses $e^{12/5}>10$. Thus \textup{(S3)} yields
\[
 \frac G{as}>S\left(1-\frac{720}{1421}-\frac25\right)
             =\frac{663}{7105}S>0.
\]
The elementary constants $\omega<3/10$, $\log2<7/10$ and
$e^{12/5}>10$ were proved above from positive exponential-series partial sums. Therefore $F>0$, and hence
$g-hqd^2\ge Q>0$ for every nonsingleton tree. Adding independent
unweighted components adds their strictly positive deficits without
altering $q$ or $d$.
\end{proof}

\paragraph{Scope.}
The multiplier range is sufficient, not asserted optimal. This theorem
allows one marked root, with unweighted vertices elsewhere in that
rooted piece. It does not itself establish a general two-mark variance
statement, a reference-law comparison, or any new coefficient BL theorem.

\subsection{A stronger logarithmic deletion-deficit floor}
Throughout this section, $0<t\le1$ is the common unweighted activity,
$a=t/(1+t)$, and $g_G=\mathbb E K_G-\operatorname{Var}K_G$.
Lemma~\ref{lem:forest-subpoisson} supplies $g_G\ge0$ and,
for any chosen root of a tree, the rooted reserve $g_G\ge aq d^2$.

\begin{lemma}[Root-probability and logarithmic floors]\label{lem:four-root-floor}
For a nonempty rooted unweighted tree, let $q$ be its root occupation
probability and $d$ its conditional size-mean difference. Then
\[
 g_G\ge\frac{aq}{1+a-q}
 >\frac{a}{1+a}\bigl[-\log(1-q)\bigr].
\]
Consequently, if a forest has a chosen root in each of its nonempty
components, with root probabilities $q_i$, its deficit is at least
\[
 \frac{a}{1+a}\sum_i-\log(1-q_i)
 =\frac{t}{1+2t}\sum_i-\log(1-q_i).
\]
The empty forest has zero deficit and zero sum. The logarithmic
inequality is strict for every nonempty such forest.
\end{lemma}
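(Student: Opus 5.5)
The plan is to squeeze the first inequality out of two lower bounds on $g_G$ that are already available, and then to reduce the logarithmic comparison to a one-variable calculus check.

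\emph{First bound.} Lemma~\ref{lem:forest-subpoisson} gives the rooted reserve $g_G\ge aqd^2$.

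\emph{Second bound.} The conditional-mixture identity \eqref{eq:subpoisson-mixture}, written with $s=1-q$, reads
\[
 g_G=s\sum_i g_i+q\sum_i g_i^-+q-qsd^2 .
\]
Every child deficit $g_i$ and every root-deleted deficit $g_i^-$ is a deficit of a forest, so it is nonnegative by Lemma~\ref{lem:forest-subpoisson}. Hence $g_G\ge q-qsd^2$. For a singleton there are no children, and the identity holds directly as the Bernoulli computation $g=q-q(1-q)=q^2$ with $d=1$.

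\emph{Combining them.} Put $x=d^2\ge0$. Then $g_G\ge\max\{aqx,\;q-qsx\}$. The first function of $x$ increases and the second decreases, so the maximum is smallest where they cross, at $x=1/(a+s)$. The value there is
\[
 \frac{aq}{a+s}=\frac{aq}{1+a-q}.
\]
This proves $g_G\ge aq/(1+a-q)$ for every value of $d$, including the case $d=0$, where the rooted reserve alone says nothing. As a consistency check, for a singleton $q=a$, and the bound equals $a^2=g$.

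\emph{Logarithmic comparison.} It remains to show
\[
 \frac{aq}{1+a-q}>\frac{a}{1+a}\bigl[-\log(1-q)\bigr]
 \qquad\text{for }0<q\le a\le\tfrac12 .
\]
Here $q\le a$ is the occupation bound from the proof of Corollary~\ref{cor:explicit-variance-reserve}. Put $b=1+a$ and
\[
 \varphi(q)=\frac{bq}{b-q}+\log(1-q),
\]
so that $\varphi(0)=0$. Its derivative is $\varphi'(q)=b^2/(b-q)^2-1/(1-q)$, whose sign is that of
\[
 b^2(1-q)-(b-q)^2=q\bigl(1-a^2-q\bigr).
\]
This is positive, because $q+a^2\le a+a^2\le\tfrac34<1$. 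So $\varphi$ is strictly increasing and $\varphi(q)>0$ for $q>0$. This sign check is the only delicate computation, and it uses $a\le\tfrac12$.

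\emph{Forest statement.} Deficits add over independent components. Choosing a root in each nonempty component and summing the per-component bounds gives
\[
 \frac{a}{1+a}\sum_i\bigl[-\log(1-q_i)\bigr],\qquad \frac{a}{1+a}=\frac{t}{1+2t}.
\]
The inequality is strict whenever the forest has at least one nonempty component, since each such component contributes a strict inequality. The empty forest gives $0=0$.
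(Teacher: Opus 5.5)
Your proposal is correct and follows the paper's proof: the same two bounds $g_G\ge aqd^2$ and $g_G\ge q-q(1-q)d^2$, the same auxiliary function $f(q)=bq/(b-q)+\log(1-q)$ with numerator sign $q(1-a^2-q)$, and the same additivity over components. Minimizing the larger of the two linear bounds in $d^2$ at their crossing point is equivalent to the paper's explicit convex combination with weights $(1-q)/(1+a-q)$ and $a/(1+a-q)$, which cancels the $d^2$ terms.
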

\begin{proof}
Root conditioning gives
\[
 g_G=(1-q)g^0+qg^1+q-q(1-q)d^2
 \ge q-q(1-q)d^2,
\]
where $g^0,g^1\ge0$ are the deficits of the two unweighted residual
forests (the occupied root's contribution is the displayed $q$).
Combine this inequality with $g_G\ge aq d^2$. Take weight
$(1-q)/(1+a-q)$ on the lower bound $aqd^2$, and weight
$a/(1+a-q)$ on $q-q(1-q)d^2$. The $d^2$ terms cancel, giving
$g_G\ge aq/(1+a-q)$.

Put $b=1+a$. For $0\le q\le a\le1/2$, define
\[
 f(q)=\frac{bq}{b-q}+\log(1-q).
\]
Then $f(0)=0$, and
\[
 f'(q)=\frac{q\,[b(2-b)-q]}{(b-q)^2(1-q)}>0
 \quad(0<q\le a),
\]
because $b(2-b)-q=1-a^2-q\ge1-a^2-a\ge1/4$.
It follows that
\[
 -\log(1-q)<\frac{bq}{b-q},
\]
which, after multiplication by $a/b$, is the claimed strict
logarithmic floor. The bound $q\le a$ follows from the unweighted
root odds relation. Deficits add over independent components,
proving the forest statement.
\end{proof}

\begin{corollary}[Deletion floor at an unweighted root]\label{cor:four-center-deletion}
For an unweighted rooted tree, let its root odds be $y$, let $d$
be its conditional size-mean difference, and put
\[
 L=\log(t/y)=\sum_i-\log(1-q_i)\ge0,
\]
where $q_i$ are the child-root probabilities after deleting the root.
The deficit $g^0$ of the root-deleted forest satisfies
\[
 g^0\ge\frac{t}{1+2t}L.
\]
Moreover,
\begin{equation}\label{d2four:eq:quarter-deletion-floor}
 g^0\ge y(1-d)^2+\frac{t}{4(1+2t)}L
 \ge y(1-d)^2+
 \frac{t(t-y)}{2(1+2t)(t+y)}.
\end{equation}
A stronger activity-dependent form is
\begin{equation}\label{d2four:eq:activity-deletion-floor}
 g^0\ge y(1-d)^2+
 \frac{t(5-3t)}{8(1+2t)}L
 \ge y(1-d)^2+
 \frac{t(5-3t)(t-y)}{4(1+2t)(t+y)}.
\end{equation}
\end{corollary}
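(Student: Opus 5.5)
The plan is to reduce everything to the two per-child floors already proved, blend them with a child-independent weight, and then use elementary inequalities for $\log$ and for $Le^{-L}$.

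\textbf{Setup.} Deleting the root leaves the disjoint union of the child trees $T_i$, each rooted at its child vertex with root probability $q_i$, response $d_i$ and deficit $g_i$. Deficits add over independent components, so $g^0=\sum_i g_i$. The unweighted root odds relation $y=t\prod_i(1-q_i)$ gives the identity $L=\log(t/y)=\sum_i-\log(1-q_i)\ge0$. The first assertion then follows at once from Lemma~\ref{lem:four-root-floor}, since $a/(1+a)=t/(1+2t)$. If the root has no children, then $L=0$, $y=t$ and $d=1$, so every displayed bound reads $g^0\ge0$. We may therefore assume $k\ge1$ children.

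\textbf{Blending the two floors.} For each child I will use two lower bounds.
\begin{itemize}
\item Lemma~\ref{lem:forest-subpoisson} gives $g_i\ge aq_id_i^2$.
\item Lemma~\ref{lem:four-root-floor} gives $g_i\ge\frac{a}{1+a}\,\ell_i$, where $\ell_i=-\log(1-q_i)$.
\end{itemize}
Fix $\lambda\in[0,1]$, the same for all children, and take the convex combination:
\[
 g^0\ge\lambda a\sum_iq_id_i^2+(1-\lambda)\frac{t}{1+2t}L .
\]
The mixture formula \eqref{eq:subpoisson-mixture} gives $1-d=u=\sum_iq_id_i$. Cauchy--Schwarz gives $\sum_iq_id_i^2\ge u^2/S$ with $S=\sum_iq_i>0$. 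Hence it suffices to choose $\lambda$ with $\lambda a/S\ge y$.

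\textbf{Choosing $\lambda$.} This step is the main point. From \eqref{eq:subpoisson-message-bounds} we have $S\le L$, and $y=te^{-L}$. Therefore
\[
 \frac{yS}{a}\le(1+t)Le^{-L}\le\frac{1+t}{e}.
\]
So any $\lambda\ge(1+t)/e$ works.
\begin{itemize}
\item Taking $\lambda=3/4$ is admissible, because $(1+t)/e\le 2/e<3/4$. This leaves the coefficient $\frac14\cdot\frac{t}{1+2t}$ on $L$, which is the first inequality of \eqref{d2four:eq:quarter-deletion-floor}.
\item Taking $\lambda=3(1+t)/8$ is admissible, because $1/e<3/8$, and it satisfies $\lambda\le3/4\le1$. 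Then $1-\lambda=(5-3t)/8$, which gives the first inequality of \eqref{d2four:eq:activity-deletion-floor}.
\end{itemize}
The only care needed is to confirm that a single $\lambda$ serves every child simultaneously. It does, because the constraint $\lambda a/S\ge y$ involves only the aggregate quantities $S$ and $L$.

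\textbf{Converting $L$ to the rational form.} Apply the elementary inequality $\log x\ge 2(x-1)/(x+1)$ for $x\ge1$ with $x=t/y$. This gives $L\ge 2(t-y)/(t+y)$, and substituting it yields the second inequalities in both displays. The inequality itself holds because both sides vanish at $x=1$ and the derivative difference is $(x-1)^2/[x(x+1)^2]\ge0$. We have $x\ge1$ since $y\le t$.
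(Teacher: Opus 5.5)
Your proposal is correct and is essentially the paper's proof. Both blend the Cauchy--Schwarz deletion reserve with the logarithmic floor of Lemma~\ref{lem:four-root-floor}, using weights $3/4$ and $3(1+t)/8$. Both justify this through $y=te^{-L}$, $S\le L$, $Le^{-L}\le e^{-1}$ and $e>8/3$, and then finish with $\log u\ge 2(u-1)/(u+1)$. The only cosmetic difference is that you keep $S$ in the Cauchy--Schwarz step and apply $S\le L$ afterwards, whereas the paper first writes $g^0\ge a(1-d)^2/L$.
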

\begin{proof}
The first bound is the preceding lemma applied to the child trees.
When $L>0$, the inherited Cauchy--Schwarz deletion reserve gives
\[
 g^0\ge\frac aL(1-d)^2.
\]
Since $y=te^{-L}$ and $Le^{-L}\le e^{-1}$,
\[
 \frac aL\ge\frac{e}{1+t}y>\frac{8}{3(1+t)}y.
\]
Here $e>8/3$ follows from the positive exponential series beyond its
cubic term. In particular $a/L>(4/3)y$.
Taking three quarters of this last quadratic bound and one quarter
of the logarithmic bound yields the first inequality in
\eqref{d2four:eq:quarter-deletion-floor}.
More generally, take weight $3(1+t)/8$ on the quadratic bound
with coefficient $8y/[3(1+t)]$, and the remaining weight
$(5-3t)/8$ on the logarithmic bound. Both weights belong to $[0,1]$,
and this gives the first inequality in
\eqref{d2four:eq:activity-deletion-floor}.
Finally, for $0<y\le t$, the elementary logarithmic inequality gives
\[
 L=\log(t/y)\ge\frac{2(t-y)}{t+y}.
\]
This follows by differentiation of
$\log u-2(u-1)/(u+1)$ on $u\ge1$.
If $L=0$, the rooted tree is a singleton, $d=1$, $y=t$, and
$g^0=0$. Every displayed bound then holds directly.
\end{proof}

\subsection{Clique assembly and adjacent-pair total variance}
\begin{theorem}[Clique assembly with marked roots]\label{thm:marked-clique}
Let $(T_i,o_i)_{i=1}^k$, with $k\ge1$, be pairwise disjoint nonempty
rooted trees, and assign
fixed multipliers $0<\theta_i\le\Theta$ to their roots. All other vertex
multipliers are one. Add every edge between distinct roots. At common
activity $0<t\le1$, the resulting independent-set size $K$ satisfies
\[
 \operatorname{Var}K<\mathbb EK.
\]
Additional unweighted forest components may be included.
\end{theorem}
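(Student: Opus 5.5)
The plan is to condition on which root, if any, is occupied. Because the roots form a clique, at most one root lies in the independent set. The law of $K$ is therefore a mixture over the events $E_0$ (no root occupied) and $E_i$ ($o_i$ occupied). Run each marked tree $(T_i,o_i)$ independently under its own marked hard-core law at activity $t$. For that tree, write $q_i$ for the root probability and $s_i=1-q_i$. Write $g_i^0$ and $g_i^1$ for the deficits $\E-\Var$ of the residual size laws given $o_i\notin S$ and given $o_i\in S$, where in the second case the occupied root is counted in the size. Write $d_i$ for the conditional mean difference. The clique law is the product law conditioned on $\bigcup_{e}E_e$. Hence, with $x_i=q_i/s_i$ and $Z=1+\sum_jx_j$, one has $p_0=\Pr(E_0)=1/Z$ and $p_i=\Pr(E_i)=x_i/Z$. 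Moreover, given $E_i$, the trees are independent, with tree $i$ in its occupied conditional law and every other tree in its unoccupied conditional law.

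The total-variance identity gives the following. Conditional deficits add over independent pieces, and the conditional means are $M_0+d_i$ on $E_i$ and $M_0$ on $E_0$. Therefore
\[
 \E K-\Var K=\sum_j\bigl[(1-p_j)g_j^0+p_jg_j^1\bigr]
 -\sum_ip_id_i^2+\Bigl(\sum_ip_id_i\Bigr)^2 .
\]
For a single tree the same identity reads $g_i=s_ig_i^0+q_ig_i^1-q_is_id_i^2$. Theorem~\ref{thm:marked-square-reserve} gives $g_i\ge q_i^2d_i^2$, which rearranges to
\[
 s_ig_i^0+q_ig_i^1\ge q_id_i^2 .
\]
This is the key per-tree input, and the restriction $\theta_i\le\Theta$ enters only here.

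Next comes the comparison of coefficients. Since $Z\ge1+x_j$, we have $p_j\le x_j/(1+x_j)=q_j$. Put $\lambda_j=p_j/q_j\le1$. Then $p_j=\lambda_jq_j$, and $1-p_j\ge\lambda_js_j$ is equivalent to $q_j\ge p_j$. The residual forests are unweighted, so Lemma~\ref{lem:forest-subpoisson} gives $g_j^0,g_j^1\ge0$; for $g_j^1$ the root contributes a Bernoulli-free $+1$ shift, and the remaining forest is unweighted. Consequently
\[
 (1-p_j)g_j^0+p_jg_j^1\ge\lambda_j\bigl(s_jg_j^0+q_jg_j^1\bigr)\ge p_jd_j^2 .
\]
Summing over $j$ yields $\E K-\Var K\ge(\sum_ip_id_i)^2\ge0$.

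The main obstacle is strictness. If some $T_i$ has more than one vertex, Theorem~\ref{thm:marked-square-reserve} is strict, and since $\lambda_i>0$ the bound is strict. If every $T_i$ is a singleton, then every $d_i=1$, and the final square equals $(\sum_ip_i)^2>0$. It remains to handle added unweighted forest components. They are independent of the rest, so they add their own deficits, which are nonnegative and in fact strictly positive by Lemma~\ref{lem:forest-subpoisson}. This completes $\Var K<\E K$.
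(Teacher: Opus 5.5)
Your proof is correct and is essentially the paper's argument. Eliminating $g_j^1$ from your mixture identity via $g_j=s_jg_j^0+q_jg_j^1-q_js_jd_j^2$ gives exactly the paper's decomposition $g=\sum_i\lambda_i(g_i-q_i^2d_i^2)+(\sum_ip_id_i)^2+\sum_i(1-\lambda_i)g_i^0$ with $\lambda_i=p_i/q_i\le1$; your strictness cases (a non-singleton piece with $\lambda_i>0$, or all $d_i=1$ giving $(\sum_ip_i)^2>0$) and your handling of extra unweighted components also match the paper.
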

\begin{proof}
In each separate marked rooted tree write $q_i,d_i,g_i$ as in
Theorem~\ref{thm:marked-square-reserve}, and let $g_i^0\ge0$ be
the deficit after deleting the root; that graph is an unweighted forest.
Put
\[
 r_i=\frac{q_i}{1-q_i},\qquad Z=1+\sum_i r_i,
 \qquad p_i=\frac{r_i}{Z}\le q_i.
\]
The joining clique contains at most one selected root. Applying total
variance first in each isolated piece and then in the assembly gives
the exact identity
\[
 g=\sum_i\frac{p_i}{q_i}(g_i-q_i^2d_i^2)
   +\left(\sum_i p_id_i\right)^2
   +\sum_i\left(1-\frac{p_i}{q_i}\right)g_i^0.
\]
All terms are nonnegative by the square reserve. If any piece has
more than one vertex, its first summand is strictly positive. Otherwise
all $d_i=1$, and the displayed square is strictly positive.
Additional unweighted forest components have nonnegative deficits,
which add independently.
\end{proof}

\begin{corollary}[Adjacent-pair marking]\label{cor:adjacent-pair-variance}
Let $L$ be a forest with adjacent vertices $u,v$. For $0<t\le1$ and
fixed $0<\theta\le\Theta$, give an independent set $S$ weight
\[
 \theta^{\mathbf1_{u\in S\,\mathrm{or}\,v\in S}}t^{|S|}.
\]
Then $\operatorname{Var}|S|<\mathbb E|S|$.
\end{corollary}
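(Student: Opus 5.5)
The plan is to reduce Corollary~\ref{cor:adjacent-pair-variance} to the clique-assembly Theorem~\ref{thm:marked-clique} with $k=2$. The reduction uses the edge $uv$ as the joining clique. Delete the edge $uv$ from the forest $L$. The component of $L$ containing $u$ and $v$ then splits into two disjoint trees: $T_u$ containing $u$ and $T_v$ containing $v$. All other components of $L$ are unchanged unweighted forest components. Root $T_u$ at $u$ and $T_v$ at $v$. Re-adding the single edge between the roots is exactly the clique join of Theorem~\ref{thm:marked-clique} for $k=2$, so the independent sets of $L$ are precisely the independent sets of the assembled graph.

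The one point to handle is the weight. Because $u$ and $v$ are adjacent, every independent set $S$ contains at most one of them. Hence
\[
 \mathbf 1_{u\in S\ \mathrm{or}\ v\in S}=\mathbf 1_{u\in S}+\mathbf 1_{v\in S},
\]
and so
\[
 \theta^{\mathbf 1_{u\in S\ \mathrm{or}\ v\in S}}
 =\theta^{\mathbf 1_{u\in S}}\theta^{\mathbf 1_{v\in S}}.
\]
The OR-event weight is therefore the same as placing the fixed multiplier $\theta$ on each root separately, with $\theta_1=\theta_2=\theta\in(0,\Theta]$ and every other multiplier equal to one. This is exactly the hypothesis of Theorem~\ref{thm:marked-clique}.

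With this identification, the marked-clique theorem, including its allowance for additional unweighted forest components, gives $\Var|S|<\E|S|$ at every common activity $0<t\le1$.

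The only step needing care is checking that the edge-deletion description matches the theorem's hypotheses. Since $L$ is a forest, removing $uv$ does disconnect $u$ from $v$, so $T_u$ and $T_v$ are disjoint nonempty trees (possibly singletons) whose roots carry the marks. No further estimate is required, because the strictness is already supplied inside Theorem~\ref{thm:marked-clique}: either one piece is a nonsingleton, or both $d_i=1$ and the square term is positive.
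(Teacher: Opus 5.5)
Your proposal is correct and is essentially the paper's own proof. You delete $uv$, root the two resulting trees at $u$ and $v$, and mark both roots by $\theta$. Adjacency makes the product of the two root multipliers equal to the OR-event multiplier, so Theorem~\ref{thm:marked-clique} applies with $k=2$, with the remaining components as independent unweighted padding.
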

\begin{proof}
Deleting $uv$ splits its tree component into two rooted trees. Mark
both roots by $\theta$ and apply Theorem~\ref{thm:marked-clique}
with two pieces. Since $u,v$ are adjacent, at most one can belong to
$S$, so the product of their two multipliers is exactly the stated
OR-event multiplier. Other components are unweighted and independent.
\end{proof}

\begin{corollary}[Canonical adjacent-pair variance]\label{cor:adjacent-pair-canonical}
For the same adjacent pair, fixed $0<\theta\le\Theta$, and integers
$N\ge\alpha(L)$ and $1\le r\le N$, the law on independent sets
with weights
\[
 \theta^{\mathbf1_{u\in S\,\mathrm{or}\,v\in S}}
 \binom{N-|S|}{r-|S|}
\]
satisfies $\operatorname{Var}|S|<\mathbb E|S|$.
\end{corollary}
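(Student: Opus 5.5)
The plan is to apply Lemma~\ref{lem:capacity-transfer} directly, taking $G=L$ and the fixed weights
\[
 w(S)=\theta^{\mathbf1_{u\in S\,\mathrm{or}\,v\in S}}>0 .
\]
That lemma needs two inputs.

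\textbf{Positivity.} The weights $w(S)$ must be positive on every independent set. This holds because $\theta>0$.

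\textbf{Variance bound.} The tilted laws $w(S)t^{|S|}$ must satisfy $\Var|S|<\E|S|$ for every $0<t\le1$. This is exactly Corollary~\ref{cor:adjacent-pair-variance} at common activity $t$. The mark $\theta$ stays fixed as $t$ varies, as that corollary requires.

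Given both inputs, Lemma~\ref{lem:capacity-transfer} yields the conclusion for the weights $w(S)\binom{N-|S|}{r-|S|}$, for every $N\ge\alpha(L)$ and $1\le r\le N$. This is precisely the stated canonical law.

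\textbf{Nonemptiness.} The lemma also assumes $L$ is nonempty. This is automatic, since $L$ contains the edge $uv$.

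\textbf{Capacity condition.} The only point to check is that the lemma is phrased with the capacity $N\ge\alpha(G)$ rather than an order bound. That is exactly the hypothesis given here, so no extra condition arises.

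The main obstacle, if any, is confirming that the mean-matching step inside Lemma~\ref{lem:capacity-transfer} uses only size-aggregated weights. Here those are $a_k=\sum_{|S|=k}w(S)$, which are positive for $0\le k\le\alpha(L)$. The matched activity $t_*$ then lies in $(0,r/N]\subseteq(0,1]$, where Corollary~\ref{cor:adjacent-pair-variance} applies. The case $r=N$ is trivial, since the multiplier equals one.
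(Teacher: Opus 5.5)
Your proposal is correct and follows the same route as the paper: it applies Lemma~\ref{lem:capacity-transfer} with the fixed positive weights $w(S)=\theta^{\mathbf1_{u\in S\,\mathrm{or}\,v\in S}}$, using Corollary~\ref{cor:adjacent-pair-variance} as the variance input on $(0,1]$. The paper's proof likewise treats $r=N$ as the trivial case, and for $r<N$ it matches the mean at some $t_*\le r/N$ via the two-crossing comparison, exactly as you describe.
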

\begin{proof}
Apply Lemma~\ref{lem:capacity-transfer} to
Corollary~\ref{cor:adjacent-pair-variance}. Explicitly, at $r=N$ the
binomial multiplier is one. At $r<N$, the multiplier's consecutive
size ratios $(r-k)/(N-k)$ decrease and are at most $r/N$. Match its
positive mean with a unique hard-core activity $t_*\le r/N<1$.
The conditioned law has log-concave likelihood ratio relative to that
law, and the equal-mean two-crossing comparison reduces variance.
The strict hard-core variance inequality proves the conclusion.
\end{proof}

\subsection{A soft edge between two singly marked forest pieces}
Write
\[
 \Theta=\tfrac12\exp(3-2W(1/e)).
\]
We use Theorem~\ref{thm:marked-square-reserve}: for a rooted tree with only
its root marked by a fixed factor in $(0,\Theta]$, at common activity
$0<t\le1$,
\[
 g_i:=\mathbb EK_i-\operatorname{Var}K_i\ge q_i^2d_i^2,
\]
where $q_i$ is its root occupation probability and $d_i$ is its
occupied-minus-unoccupied conditional mean size. This inequality is
strict for a non-singleton tree and equality holds for a singleton.
Deleting the root leaves an unweighted forest, whose deficit $g_i^0$
is nonnegative.

\begin{theorem}[Two-piece soft-edge variance]\label{thm:soft-edge-variance}
Let $(T_1,o_1),(T_2,o_2)$ be disjoint nonempty rooted trees. Give their
roots fixed multipliers $\theta_1,\theta_2\in(0,\Theta]$, and every
other vertex multiplier one. For any fixed $\beta\in[0,1]$, weight
independent sets of the disjoint union by
\[
 \theta_1^{\eta}\theta_2^{\xi}\beta^{\eta\xi}t^{|S|},
 \qquad \eta=\mathbf1_{o_1\in S},\quad\xi=\mathbf1_{o_2\in S},
\]
where $0^0=1$. Then, for $0<t\le1$,
\[
 \operatorname{Var}|S|<\mathbb E|S|.
\]
The same conclusion permits extra unweighted forest components in
both rooted pieces.
\end{theorem}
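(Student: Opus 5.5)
We condition on the pair $(\eta,\xi)$ of root indicators and use total variance, mirroring the proof of Theorem~\ref{thm:marked-clique}. In isolation, piece $i$ has root probability $q_i$, response $d_i$, deficit $g_i$, and root-deleted deficit $g_i^0\ge0$. Put $r_i=q_i/(1-q_i)$. The coupled law of $(\eta,\xi)$ has weights proportional to $1,\,r_1,\,r_2,\,\beta r_1r_2$ on $(0,0),(1,0),(0,1),(1,1)$. Given $(\eta,\xi)$, the two pieces are independent, and each has its isolated conditional law. Hence the conditional mean is $\mu_1^0+\mu_2^0+d_1\eta+d_2\xi$, where $\mu_i^0$ is the mean of piece $i$ given $o_i\notin S$. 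The conditional deficit is the sum of the piecewise conditional deficits. Write $p_i=\Pr(\eta_i=1)$ in the coupled law, with $\eta_1=\eta$ and $\eta_2=\xi$.

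By the conditional-mixture identity of Lemma~\ref{lem:forest-subpoisson}, the isolated piece satisfies $g_i=(1-q_i)g_i^0+q_ig_i^1+q_i-q_i(1-q_i)d_i^2$. So the combination $p_ig_i^1+(1-p_i)g_i^0$ can be written as
\[
 \frac{p_i}{q_i}\bigl(g_i-q_i^2d_i^2\bigr)-p_i+p_id_i^2+\Bigl(1-\frac{p_i}{q_i}\Bigr)g_i^0
\]
once $p_i\le q_i$. The repulsion $\beta\le1$ gives $p_i\le q_i$: the marginal odds of $\eta$ are $r_1(1+\beta r_2)/(1+r_2)\le r_1$.

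Assemble $g=\mathbb E[\text{cond.\ deficit}]+\mathbb E[d_1\eta+d_2\xi]-\Var(d_1\eta+d_2\xi)$. The terms $-p_i+p_i$ cancel, leaving
\[
 g=\sum_i\frac{p_i}{q_i}(g_i-q_i^2d_i^2)+\sum_i\Bigl(1-\frac{p_i}{q_i}\Bigr)g_i^0+\Bigl[\sum_ip_id_i^2-\Var(d_1\eta+d_2\xi)\Bigr].
\]
The first two sums are nonnegative by Theorem~\ref{thm:marked-square-reserve}. The bracket equals $\sum_ip_i^2d_i^2-2d_1d_2\operatorname{Cov}(\eta,\xi)$. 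Since $\beta\le1$, the binary pair is negatively correlated: $\Pr(1,1)\Pr(0,0)\le\Pr(1,0)\Pr(0,1)$. So the cross term has the sign of $d_1d_2$, and this sign is the \emph{main obstacle}.

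I would bound the bracket from below by
\[
 \sum_ip_i^2d_i^2-2|d_1d_2|\,|\operatorname{Cov}|\ge(p_1|d_1|-p_2|d_2|)^2+2|d_1d_2|\bigl(p_1p_2-|\operatorname{Cov}|\bigr).
\]
Then $p_1p_2-|\operatorname{Cov}|=p_1p_2+\operatorname{Cov}=\Pr(\eta=\xi=1)\ge0$, and the bracket is nonnegative. This mirrors the clique case, where $\beta=0$ gives $\operatorname{Cov}=-p_1p_2$ and the bracket becomes $(p_1d_1+p_2d_2)^2$. Extra unweighted forest components in either piece only add nonnegative deficits by Lemma~\ref{lem:forest-subpoisson}; equivalently, Theorem~\ref{thm:marked-square-reserve} is applied to the root tree, and the extra components are added independently.

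Strictness goes by cases. If some $T_i$ is not a singleton, then $p_i>0$ and the strict square reserve makes the first sum positive. If both are singletons, then $d_1=d_2=1$ and the bracket is at least $(p_1-p_2)^2+2\Pr(1,1)$. This is positive when $\beta>0$. When $\beta=0$ the bracket is $(p_1+p_2)^2>0$. Nonempty extra components contribute strictly positive deficits in any case.
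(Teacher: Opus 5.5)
Your proposal is correct and is essentially the paper's proof: you condition on the two root indicators, derive the same exact decomposition $g=\sum_i\frac{p_i}{q_i}(g_i-q_i^2d_i^2)+\sum_i(1-\frac{p_i}{q_i})g_i^0+p_1^2d_1^2+p_2^2d_2^2-2d_1d_2\operatorname{Cov}(\eta,\xi)$, and get nonnegativity of the final quadratic form from $p_1p_2+\operatorname{Cov}(\eta,\xi)=\Pr(\eta=\xi=1)\ge0$, which is the paper's condition $pr-|C|=\beta qs/Z\ge0$ written differently. One slip to fix: in your assembly line the added term should be $\mathbb E[\eta+\xi]=p_1+p_2$ (the occupied-root units), not $\mathbb E[d_1\eta+d_2\xi]$; only with that term does the cancellation $-p_i+p_i$ occur, and your final identity then follows.
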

\begin{proof}
Let $q,s\in(0,1)$ be the root probabilities in the two separate marked
pieces, and put $\delta=1-\beta$ and $Z=1-\delta qs>0$.
Reweighting their independent root indicators by $\beta^{\eta\xi}$
gives marginals and covariance
\[
 p=\frac{q(1-\delta s)}Z\le q,\qquad
 r=\frac{s(1-\delta q)}Z\le s,
\]
\[
 C:=\operatorname{Cov}(\eta,\xi)
 =-\frac{\delta qs(1-q)(1-s)}{Z^2}\le0.
\]
Moreover
\[
 pr-|C|=\frac{\beta qs}Z\ge0.
\]
In particular the quadratic form
\[
 p^2d_1^2+r^2d_2^2-2C d_1d_2
\]
is positive semidefinite: its diagonal entries are nonnegative and
its determinant is $p^2r^2-C^2\ge0$.

The root-state reweighting does not change either piece's conditional
law given its root. Its averaged conditional deficit in the new law
is therefore
\[
 \frac p qg_1+\left(1-\frac p q\right)g_1^0
    +p(1-q)d_1^2
\]
for piece one, and the analogous expression with $p,q,1$ replaced
by $r,s,2$ for piece two. Conditioned on both root states, the pieces
are independent. The variance of their conditional total mean is
\[
 p(1-p)d_1^2+r(1-r)d_2^2+2C d_1d_2.
\]
The law of total variance now gives the exact deficit decomposition
\begin{align*}
 g={}&\frac p q(g_1-q^2d_1^2)
   +\frac r s(g_2-s^2d_2^2)\\
 &+\left(1-\frac p q\right)g_1^0
   +\left(1-\frac r s\right)g_2^0\\
 &+p^2d_1^2+r^2d_2^2-2C d_1d_2.
\end{align*}
Every displayed contribution is nonnegative. If either tree is
non-singleton, its square-reserve slack gives a strictly positive
term, since $p/q,r/s>0$. If both are singletons, $d_1=d_2=1$ and
the final quadratic form equals $p^2+r^2-2C>0$.
Additional unweighted components are independent of this reweighting;
their nonnegative deficits add.
\end{proof}

\subsection{Suppressing the absent--absent boundary event}
\begin{corollary}[Absent--absent event variance]\label{cor:absent-absent}
Let $(H,u),(K,v)$ be disjoint nonempty rooted forests, let
$\tau=\mathbf1_{u\notin S,\ v\notin S}$, and fix
\[
 \Theta^{-1}=2e^{-3+2W(1/e)}\le\gamma\le1.
\]
For $0<t\le1$, the independent-set law with weights
\[
 \gamma^\tau t^{|S|}
\]
has $\operatorname{Var}|S|<\mathbb E|S|$.
\end{corollary}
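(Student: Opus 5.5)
The plan is to recognize the absent--absent weight as a special case of the soft-edge weighting in Theorem~\ref{thm:soft-edge-variance}, after a change of marks. Write $\eta=\mathbf1_{u\in S}$ and $\xi=\mathbf1_{v\in S}$, so that $\tau=(1-\eta)(1-\xi)=1-\eta-\xi+\eta\xi$. Since $\gamma>0$, this gives the exact identity
\[
 \gamma^\tau=\gamma\cdot(\gamma^{-1})^{\eta}(\gamma^{-1})^{\xi}\gamma^{\eta\xi}.
\]
The leading factor $\gamma$ is a constant and cancels in the normalized law. Hence the law in the corollary equals the law with weights $\theta_1^\eta\theta_2^\xi\beta^{\eta\xi}t^{|S|}$, where $\theta_1=\theta_2=\gamma^{-1}$ and $\beta=\gamma$.

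Next I check the parameter ranges. The hypothesis $\Theta^{-1}\le\gamma\le1$ gives $1\le\gamma^{-1}\le\Theta$, so both marks lie in $(0,\Theta]$. It also gives $\beta=\gamma\in[\Theta^{-1},1]\subseteq[0,1]$. Because $\gamma>0$, the convention $0^0=1$ is never needed.

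Then I match the graph structure to that theorem. $H$ and $K$ are rooted forests rather than trees. Let $T_1$ be the component of $H$ containing $u$, and $T_2$ the component of $K$ containing $v$. These are nonempty rooted trees $(T_1,u)$ and $(T_2,v)$, each with only its root marked, and all their other vertices unweighted. The remaining components of $H$ and $K$ carry multiplier one and no boundary interaction. They are therefore exactly the ``extra unweighted forest components'' that Theorem~\ref{thm:soft-edge-variance} explicitly permits. Applying that theorem at each $0<t\le1$ yields $\operatorname{Var}|S|<\mathbb E|S|$.

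I do not expect a genuine obstacle. The only points needing care are the algebraic rewriting of $\gamma^\tau$ and the observation that the induced marks $\gamma^{-1}\ge1$ stay within $\Theta$. That second point is precisely why the lower bound $\gamma\ge\Theta^{-1}$ is imposed.
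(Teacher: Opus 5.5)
Your proposal is correct and is essentially the paper's proof: the same factorization $\gamma^{\tau}=\gamma\,\theta^{\eta+\xi}\gamma^{\eta\xi}$, with $\theta=\gamma^{-1}\in[1,\Theta]$ and $\beta=\gamma$, reduces the claim to Theorem~\ref{thm:soft-edge-variance}. You also state explicitly that the non-root components of $H$ and $K$ are the extra unweighted forest components that theorem permits, which the paper leaves implicit.
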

\begin{proof}
Put $\theta=1/\gamma\le\Theta$. For $\eta,\xi\in\{0,1\}$,
\[
 \gamma^{(1-\eta)(1-\xi)}
   =\gamma\,\theta^{\eta+\xi}\gamma^{\eta\xi}.
\]
The common factor $\gamma$ cancels in normalization. Thus this is
Theorem~\ref{thm:soft-edge-variance} with root marks $\theta$ and
soft-edge factor $\beta=\gamma$.
\end{proof}

\begin{corollary}[Canonical absent--absent variance]\label{cor:absent-canonical}
Under the preceding hypotheses, let $N,r$ be integers with
$N\ge\alpha(H\sqcup K)$ and $1\le r\le N$. The law with weights
\[
 \gamma^\tau\binom{N-|S|}{r-|S|}
\]
satisfies $\operatorname{Var}|S|<\mathbb E|S|$.
\end{corollary}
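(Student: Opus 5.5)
This is a direct application of Lemma~\ref{lem:capacity-transfer}, applied to the fixed weights
\[
 w(S)=\gamma^{\tau(S)},\qquad \tau(S)=\mathbf 1_{u\notin S,\ v\notin S},
\]
on the independent sets of $G=H\sqcup K$.

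\textbf{Step 1: check the hypotheses of the lemma.}
\begin{itemize}
 \item Positivity: $w(S)\in\{\gamma,1\}$ and $\gamma>0$, so $w(S)>0$ for every $S$.
 \item Fixed weights: $\gamma$ does not depend on the activity, so the weights stay fixed as $t$ varies.
 \item Nonempty graph: $G$ is nonempty because $H$ and $K$ are.
 \item Variance hypothesis: Corollary~\ref{cor:absent-absent} gives $\Var|S|<\E|S|$ for the law with weights $w(S)t^{|S|}$, for every $0<t\le1$.
\end{itemize}

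\textbf{Step 2: apply the lemma.} For integers $N\ge\alpha(G)=\alpha(H\sqcup K)$ and $1\le r\le N$, Lemma~\ref{lem:capacity-transfer} yields the strict inequality $\Var|S|<\E|S|$ for the weights $w(S)\binom{N-|S|}{r-|S|}$. These are exactly the stated weights $\gamma^\tau\binom{N-|S|}{r-|S|}$.

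For completeness, here is the mechanism inside the lemma; nothing new is needed.
\begin{itemize}
 \item At $r=N$ the binomial multiplier is identically one, so the claim is Corollary~\ref{cor:absent-absent} at $t=1$.
 \item For $r<N$, the multiplier is proportional to $(r)_k/(N)_k$. Its consecutive ratios are nonincreasing and at most $r/N$.
 \item By likelihood-ratio domination, the mean of the completion law is at most the mean at activity $r/N$.
 \item The mean under activity $t$ increases continuously from $0$, so the completion mean is matched by some $t_*\in(0,r/N]$.
 \item The completion size law has a log-concave likelihood ratio with interval support relative to the size law at $t_*$. Lemma~\ref{lem:relative-lc-variance} then gives that its variance is at most the matched hard-core variance.
 \item That matched variance is strictly below the common mean.
\end{itemize}

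\textbf{Main point to watch.} The transfer is done on size marginals $a_k=\sum_{|S|=k}w(S)$, so the non-size-dependent factor $\gamma^\tau$ is harmless. The lemma only requires the fixed-weight variance bound on the whole interval $(0,1]$, which Corollary~\ref{cor:absent-absent} supplies. The capacity condition involves only $\alpha$, not the order of $G$, so no further obstacle is expected.
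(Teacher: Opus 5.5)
Your proposal is correct and follows the paper's own proof: both apply Lemma~\ref{lem:capacity-transfer} to the fixed positive event weights $\gamma^\tau$, using Corollary~\ref{cor:absent-absent} as the fixed-weight variance hypothesis on $(0,1]$. In both, the case $r=N$ is immediate, and the case $r<N$ is handled by mean matching at some $t_*\le r/N$ followed by the equal-mean two-crossing variance comparison.
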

\begin{proof}
All event weights are positive and fixed. Apply Lemma~\ref{lem:capacity-transfer} to Corollary~\ref{cor:absent-absent}.
For $r=N$ the multiplier is one. Otherwise the size multiplier has
successive ratios $(r-k)/(N-k)$, so its mean matches the hard-core
law at some $0<t_*\le r/N<1$. Its log-concave likelihood ratio and
the equal-mean two-crossing argument reduce variance; the matched
hard-core inequality is strict.
\end{proof}

\begin{corollary}[Low-degree absent--absent interfaces]\label{cor:low-degree-absent}
Let $(H,u),(K,v)$ be disjoint nonempty rooted forests, and suppose
$\deg_H(u),\deg_K(v)\le4$. For every fixed $0<\gamma\le1$ and
$0<t\le1$, the law of independent sets with weights
\[
 \gamma^{\mathbf1_{u\notin S,\,v\notin S}}t^{|S|}
\]
satisfies $\operatorname{Var}|S|<\mathbb E|S|$. The same holds under
canonical completion weights
$\gamma^{\mathbf1_{u\notin S,\,v\notin S}}
 \binom{N-|S|}{r-|S|}$ for $N\ge\alpha(H\sqcup K)$ and $1\le r\le N$.
\end{corollary}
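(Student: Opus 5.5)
The plan is to repeat the reduction of Corollary~\ref{cor:absent-absent}, replacing the bounded-mark square reserve of Theorem~\ref{thm:marked-square-reserve} by the unrestricted low-degree reserve of Corollary~\ref{cor:low-degree-mark}. Put $\theta=1/\gamma\ge1$. The identity
\[
 \gamma^{(1-\eta)(1-\xi)}=\gamma\,\theta^{\eta+\xi}\gamma^{\eta\xi}
\]
rewrites the law as the two-piece soft-edge law. Its root marks are $\theta_1=\theta_2=\theta$ and its soft-edge factor is $\beta=\gamma\in(0,1]$, since the common factor $\gamma$ cancels in normalization.

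The forests $H$ and $K$ may be disconnected. I take the tree components $T_1\ni u$ and $T_2\ni v$ as the rooted pieces and treat the remaining components as extra unweighted forest components, exactly as Theorem~\ref{thm:soft-edge-variance} permits. The degree hypotheses give $\deg_{T_1}(u),\deg_{T_2}(v)\le4$.

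Next I rerun the proof of Theorem~\ref{thm:soft-edge-variance} verbatim. That proof used only three inputs about each marked piece:
\begin{itemize}
\item the square reserve $g_i\ge q_i^2d_i^2$, strict for nonsingletons;
\item nonnegativity of the root-deleted deficit $g_i^0$, which holds by Lemma~\ref{lem:forest-subpoisson} since the deleted piece is an unweighted forest;
\item the reweighting algebra for $p,r,C$, which is valid for any $\beta\in[0,1]$.
\end{itemize}
The bound $\theta_i\le\Theta$ entered only through the square reserve. Corollary~\ref{cor:low-degree-mark} supplies that reserve for every positive mark at roots of degree at most four. So the exact total-variance decomposition of $g$ again consists of nonnegative terms, and the strictness argument is unchanged. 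If either tree is a nonsingleton, its reserve slack is strictly positive, weighted by $p/q>0$ or $r/s>0$. If both are singletons, the form equals $p^2+r^2-2C>0$. Added unweighted components contribute nonnegative deficits. This proves $\operatorname{Var}|S|<\mathbb E|S|$ for all $0<t\le1$.

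For the canonical statement, I apply Lemma~\ref{lem:capacity-transfer} with the fixed positive weights $w(S)=\gamma^{\tau(S)}$ on independent sets of $G=H\sqcup K$. The hypothesis of that lemma is exactly the variance inequality just proved for all $0<t\le1$, and the capacity condition is $N\ge\alpha(G)$. As in Corollary~\ref{cor:absent-canonical}, the weights are independent of $t$, so the transfer applies directly.

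The main point to check is that the proof of Theorem~\ref{thm:soft-edge-variance} really uses the mark bound only through the square reserve. In particular, the quantities $p\le q$, $r\le s$, $Z>0$ and $pr-|C|=\beta qs/Z\ge0$ hold for arbitrary root probabilities $q,s\in(0,1)$, whatever the marks. Once that is confirmed, the corollary is a direct substitution.
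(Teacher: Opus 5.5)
Your proposal is correct and matches the paper's proof: it also applies the deficit identity from the proof of Theorem~\ref{thm:soft-edge-variance} with $\theta_1=\theta_2=1/\gamma$ and $\beta=\gamma$, uses Corollary~\ref{cor:low-degree-mark} to supply each piece's square reserve for an arbitrary mark, and obtains the canonical version from Lemma~\ref{lem:capacity-transfer}. You check that the bound $\theta_i\le\Theta$ enters only through the square reserve, and not through the reweighting algebra for $p,r,C$; this is exactly the point on which the paper's brief argument relies.
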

\begin{proof}
Apply the deficit identity in the proof of
Theorem~\ref{thm:soft-edge-variance} with root multipliers
$\theta_1=\theta_2=1/\gamma$ and soft-edge factor $\beta=\gamma$.
Corollary~\ref{cor:low-degree-mark} supplies each marked
piece's square reserve. Root-deleted forests retain nonnegative deficits.
Strictness and independent padding follow exactly as in that identity.
For the canonical conclusion apply Lemma~\ref{lem:capacity-transfer}.
For $r<N$, the matching activity is at most $r/N<1$; for $r=N$,
the binomial multiplier is one and the first conclusion applies at $t=1$.
\end{proof}

% Research frontier continuation, October 26, 2026.
% Depends only on the proved unweighted rooted reserve and marked square reserve.

\section{A uniform-ratio reserve for arbitrary weighted forests}\label{nuf:sec:forest-ratio}
\paragraph{Scope.}
This section proves a dimension-free extension of the common-activity forest
variance theorem. Arbitrary branching and arbitrarily many distinct site
activities are allowed, provided the largest site activity is at most one
and is at most $4/3$ times the smallest. The fixed-mark canonical
consequence follows as well. This does not prove barrier log-concavity
for general forest independence polynomials. The known heterogeneous-star
obstruction has activity ratio $23$ and does not meet this hypothesis.
The argument extends the rooted reserve and logarithmic deletion method
in the preceding probability foundation; it does not assert a
new proof of that earlier equal-activity special case.

\begin{theorem}[A quantitative heterogeneous forest reserve]
\label{nuf:thm:main}
Let $0<t\le1$, $\rho\ge1$, and define
\[
 a=\frac{t}{\rho+t},\qquad
 \delta=2-\frac{\rho+t}{e},\qquad
 B=\frac{t(1+1/e)}{1+t}.
\]
Suppose
\begin{equation}\label{nuf:eq:criterion}
 \delta>0,\qquad
 \frac B\delta+(\rho+2t)e^{-\delta-1}<1.
\end{equation}
Give every vertex $v$ of a finite nonempty forest a site activity
$\lambda_v\in[t/\rho,t]$. Its hard-core independent-set size $K$
satisfies $\Var K<\E K$.

More precisely, for a tree rooted at any vertex, write $q$ for its
root occupation probability and
\[
 d=\E[K\mid\text{root occupied}]
   -\E[K\mid\text{root unoccupied}].
\]
Then
\begin{equation}\label{nuf:eq:reserve}
 \E K-\Var K\ge aqd^2.
\end{equation}
The inequality is strict for every tree with at least two vertices.
For a singleton, equality holds precisely when its activity is $t/\rho$.
Condition \eqref{nuf:eq:criterion} holds for every $0<t\le1$ when
$1\le\rho\le4/3$.
\end{theorem}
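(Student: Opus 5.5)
The plan is to repeat the rooted induction of Lemma~\ref{lem:forest-subpoisson} with heterogeneous activities, proving the reserve \eqref{nuf:eq:reserve} at every choice of root simultaneously, by induction on the order. For a singleton with activity $\lambda\in[t/\rho,t]$ we have $q=\lambda/(1+\lambda)$, $d=1$ and $g=q^2$. Since $q\ge t/(\rho+t)=a$, we get $g\ge aq$, with equality exactly when $\lambda=t/\rho$; this gives the stated singleton equality case.

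For the induction step with root activity $\lambda$, the mixture identity \eqref{eq:subpoisson-mixture} holds verbatim with root odds $q/(1-q)=\lambda\prod_i(1-q_i)$. I would now set
\[
 L_i=\log\frac{\lambda_i(1-q_i)}{q_i}=\sum_j-\log(1-q_{ij}),
\]
where $\lambda_i$ is the child-root activity. The grandchild Cauchy--Schwarz bound $g_i^-\ge a(1-d_i)^2/L_i$ is unchanged, and $L_i=0$ again forces a singleton child with $d_i=1$. The target \eqref{eq:subpoisson-quadratic} keeps its form, with $C=s+a$. The square completion of Lemma~\ref{lem:rooted-scalar-positivity} is purely algebraic in $q_i,L_i,d_i,q$, so it again reduces the claim to $F>0$ with the expansion \eqref{eq:subpoisson-expanded-F}.

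Only the scalar estimates change, so the work is a heterogeneous version of Lemma~\ref{lem:rooted-scalar-positivity}. First, each child satisfies $q_i\le t/(1+t)$, and maximizing $x\log(t/x)$ still gives $q_iL_i\le t/e$. The subtlety is $a$: we still have $a\le q_i$-type bounds failing, since now $q_i$ can exceed $a=t/(\rho+t)$. So $w/a=(\rho+t)/e$, which explains $\delta=2-(\rho+t)/e$. Second, the odds relation gives
\[
 R=\frac{qC}{as}=\lambda\,\frac{\rho+t}{t}\,(s+a)\,e^{-L}\le(\rho+t)(1+a)e^{-L}.
\]
I expect this to be bounded by $(\rho+2t)e^{-L}$ after using $\lambda\le t$ and $s\le1$. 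For the term $(a-q)(q+sw)$, the root probability $q$ may now exceed $a$. That helps, since it makes the term nonnegative. When $q<a$, I would write $a-q\le as(1-e^{-L})$-type bounds, and with $q+sw\le t(1+1/e)/(1+t)=B$ (using $q\le t/(1+t)$, $w\le t/e$) this produces the $B$ term.

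The case split then mirrors the original proof. If $S\le\delta$, I use $1-e^{-L}\le L$ and $L\le S/c$, where $c$ is now computed from $q_i\le t/(1+t)$, giving $c\ge a'/\log(1+t)$. Combining these needs $B/\delta$-type control, or alternatively a direct bound $B\cdot L/S<1$. If $S>\delta$, the ratio term is $<B/\delta$, and $R(S-\delta)\le(\rho+2t)e^{-\delta-1}$. Then \eqref{nuf:eq:criterion} gives $G>0$, and the positivity of the coefficient $P$ of $m$ follows similarly from $(\rho+2t)e^{-3}<1$.

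The main obstacle is making the normalizations consistent. The reserve constant is $a=t/(\rho+t)$, but the message bounds $q_i\le t/(1+t)$ and $S\le L$ involve different constants, and I must check that every inequality lands exactly on $B/\delta+(\rho+2t)e^{-\delta-1}<1$; the $S\le\delta$ case may need its own verification. Finally, for $1\le\rho\le4/3$ I would verify \eqref{nuf:eq:criterion} by monotonicity in $t$ and $\rho$, since the worst case is $t=1$, $\rho=4/3$. There $\delta=2-7/(3e)>1.14$, $B/\delta<0.6$ and $(10/3)e^{-2.14}<0.4$, checked with the elementary exponential bounds used earlier. Summing over components, as in Lemma~\ref{lem:forest-subpoisson}, gives $\Var K<\E K$.
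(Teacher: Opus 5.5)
Your proposal is correct and is essentially the paper's proof. It uses the same induction asserting the reserve at every root, the same square completion down to $F>0$, and the same three modified estimates $R\le(\rho+2t)e^{-L}$, $a-q\le as(1-e^{-L})$ (this is where the lower bound $\lambda\ge t/\rho$ enters the induction step) and $q+sw\le B$, followed by the same split at $S=\delta$. Your use of the true $L_i=\log(\lambda_i(1-q_i)/q_i)$ in place of the paper's $t$-based $L_i$ is harmless, since only $L_i\ge0$, $q_iL_i\le t/e$ and the constraint $d_i=1$ when $L_i=0$ are needed. The case $S\le\delta$ that you left open closes by your ``direct bound'', not by $B/\delta$-control: $1-e^{-L}\le L\le S/c$ gives $G/A\ge S\bigl(1-B/c\bigr)=S\bigl(1-(1+1/e)\log(1+t)\bigr)>0$. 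This holds independently of $\rho$, because $(1+1/e)\log 2<1$.
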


\begin{lemma}[Scalar inequality with separate upper and lower activities]
\label{nuf:lem:scalar}
Under \eqref{nuf:eq:criterion}, put $A_0=t/(1+t)$ and let
$0<q_i\le A_0$, $1\le i\le k$, where $k\ge1$. Set
\[
 L_i=\log\frac{t(1-q_i)}{q_i},\qquad
 L=\sum_i-\log(1-q_i),\qquad S=\sum_iq_i.
\]
Let $\lambda\in[t/\rho,t]$, define $q/(1-q)=\lambda e^{-L}$,
and write $s=1-q$. For arbitrary real $d_i$, subject to $d_i=1$
whenever $L_i=0$, put $u=\sum_iq_id_i$. Then
\[
 as\sum_iq_id_i^2+aq\sum_{i:L_i>0}\frac{(1-d_i)^2}{L_i}
 +q-q(s+a)(1-u)^2>0.
\]
\end{lemma}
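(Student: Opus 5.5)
The plan is to run the proof of Lemma~\ref{lem:rooted-scalar-positivity} almost verbatim, with the single activity $t$ split into an upper value $t$ (controlling children) and a lower value $t/\rho$ (controlling the reserve constant $a=t/(\rho+t)$).

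\emph{Step 1: reduce to a determinant.} Define
\[
 m=\sum_i\frac{q_iq}{q+sq_iL_i},\qquad V=\frac{S-m}{as},\qquad H=asm,\qquad C=s+a,\qquad b=qC.
\]
The same weighted Cauchy--Schwarz square completion, with $A_i=a(sq_i+q/L_i)$ and $B_i=aq/L_i$, shows that the first two sums are at least $H+(u-m)^2/V$. The convention $L_i=0\Rightarrow d_i=1$ is handled exactly as before. Minimizing over $u$ then reduces the claim to
\[
 F=(q+H)(1-bV)-b(1-m)^2>0,
\]
whose expansion is \eqref{eq:subpoisson-expanded-F} with the new $a$ and $C$.

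\emph{Step 2: collect the scalar bounds.} Since the $q_i\le A_0$ are the child messages at the upper activity $t$, the old estimates $c_0L\le S\le L$ still hold, with $c_0=A_0/\log(1+t)$. The envelope bound $q_iL_i\le t/e=w$ also holds, so $m\ge Sq/(q+sw)$. The root odds now give $q/s=\lambda e^{-L}$ with $\lambda\in[t/\rho,t]$, so $q\le\lambda/(1+\lambda)$ and $R=qC/(as)=\lambda(\rho+t)Ce^{-L}/t$. Because $C\le1+a$, we get
\[
 R\le(\rho+t)(1+a)e^{-L}\le(\rho+2t)e^{-L},
\]
which is presumably where the factor $\rho+2t$ in \eqref{nuf:eq:criterion} comes from. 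The coefficient of $m$, divided by $as$, is $1+R(2+q/(as)-S)$. It is positive when $S\le2$, and when $S>2$ it exceeds $1-(\rho+2t)e^{-3}$. This is positive because the criterion gives $(\rho+2t)e^{-\delta-1}<1$ and $\delta\le2$. Substituting the lower bound on $m$ then gives $F\ge\frac{q}{q+sw}G$ with
\[
 \frac{G}{as}=S-\frac{a-q}{as}(q+sw)+RS(\delta'-S),\qquad \delta'=2-\frac wa=2-\frac{\rho+t}{e}=\delta.
\]

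\emph{Step 3: case split.} The term $(q-a)$ is now more delicate, since $q$ may exceed $a$ when $\lambda>t/\rho$; but then $q-a>0$ only helps. When $q<a$, use $a-q\le as(1-e^{-L})$. This follows from the odds relation with $\lambda\ge t/\rho$, as the lower-activity root gives exactly $a$. Together with $q+sw\le B$ (the old bound at activity $t$, where $B$ is the stated constant), this gives
\[
 \frac{G}{as}\ge S-(1-e^{-L})B+RS(\delta-S).
\]
If $S\le\delta$, bound $1-e^{-L}\le L\le S/c_0$ and use $B/c_0<1$, which holds as in the homogeneous proof. If $S>\delta$, bound $(1-e^{-L})B/S<B/\delta$ and $R(S-\delta)\le(\rho+2t)e^{-\delta-1}$. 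Criterion \eqref{nuf:eq:criterion} then makes the total positive.

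\emph{Main obstacle.} The hard part is checking the $S\le\delta$ case uniformly in $\rho$, and verifying that \eqref{nuf:eq:criterion} covers $1\le\rho\le4/3$. For the latter I would first try the monotonicity of $\delta$ in $\rho$ and $t$, evaluating at the worst corner $\rho=4/3$, $t=1$: there $\delta\approx1.143$, $B/\delta\approx0.60$ and $(10/3)e^{-2.143}\approx0.39$. The margin is thin, so I would verify it with explicit rational bounds on $e$.
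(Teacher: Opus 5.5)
Your proposal is correct and follows the paper's proof essentially step for step: the same square completion down to the determinant $F$, the same message bounds, the compatibility bounds $R\le(\rho+2t)e^{-L}$, $a-q\le as(1-e^{-L})$ and $q+sw\le B$, and the same case split at $S=\delta$. Two differences are cosmetic. Your split on the sign of $q-a$ is unnecessary, since $(a-q)/(as)=1-(\rho\lambda/t)e^{-L}\le1-e^{-L}$ holds for either sign. Also, checking the criterion for $1\le\rho\le4/3$ belongs to Theorem~\ref{nuf:thm:main}, not to this lemma.
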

\begin{proof}
Use the notation
\[
 M=\sum_i\frac{q_iq}{q+s q_iL_i},\qquad
 V=\frac{S-M}{as},\qquad H=asM,\qquad C=s+a,\qquad b=qC.
\]
Completing squares in the $d_i$ and applying weighted
Cauchy--Schwarz gives, when $V>0$,
\[
 as\sum_iq_id_i^2+aq\sum_i\frac{(1-d_i)^2}{L_i}
 \ge H+\frac{(u-M)^2}{V}.
\]
An index with $L_i=0$ contributes $q_i$ to $M$, zero to $V$,
and $asq_i$ to $H$; it has the required fixed value $d_i=1$.
If $V=0$, all the $L_i$ vanish, and $u=M=S$ is fixed.
It therefore suffices to prove
\[
 F=(q+H)(1-bV)-b(1-M)^2>0.
\]
Indeed this also implies $1-bV>0$, and for $V>0$ the remaining
one-variable quadratic has minimum $F/(1-bV)$. For $V=0$ its
constrained value equals $F$.

Put $A=as$, $w=t/e$, and $R=qC/A$. Direct expansion yields
\[
 F=q(q-a)-\frac{q^2CS}{A}
       +M\left[A+qC\left(2+\frac qA-S\right)\right].
\]
The message bounds are
\begin{equation}\label{nuf:eq:bounds}
 cL\le S\le L,\quad c=\frac{A_0}{\log(1+t)},\qquad
 q_iL_i\le w,\qquad
 M\ge\frac{Sq}{q+sw}.
\end{equation}
Here the first bound uses the increasing function
$-\log(1-x)/x$ on $(0,1)$, and the second uses
$x\log(t/x)\le t/e$.
The separate upper and lower activity hypotheses give
\begin{equation}\label{nuf:eq:new-compatibility}
 R\le(\rho+2t)e^{-L},\qquad
 a-q\le A(1-e^{-L}),\qquad
 q+sw\le A_0+(1-A_0)w=B.
\end{equation}
For the first inequality, $q/s=\lambda e^{-L}$ and
$t(1+a)/a=\rho+2t$. For the second,
\[
 \frac{a-q}{as}
 =1-\frac{\rho\lambda}{t}e^{-L}\le1-e^{-L}.
\]
For the last, $q\le A_0$ and $w<1$.

The coefficient $P$ of $M$ is positive. If $S\le2$, this is immediate.
Otherwise
\[
 \frac PA\ge1-R(S-2)
 \ge1-(\rho+2t)(S-2)e^{-S}
 \ge1-(\rho+2t)e^{-3}>0.
\]
The last strict inequality follows from \eqref{nuf:eq:criterion}, since
$0<\delta<2$ implies
$(\rho+2t)e^{-3}<(\rho+2t)e^{-\delta-1}<1$.
Substitute the lower bound for $M$ in \eqref{nuf:eq:bounds}. Exact
simplification gives
\[
 F\ge\frac q{q+sw}G,\qquad
 G=(q-a)(q+sw)+S\left[A+qC\left(2-S-\frac wa\right)\right].
\]
Since $w/a=(\rho+t)/e$, \eqref{nuf:eq:new-compatibility} implies
\begin{equation}\label{nuf:eq:G}
 \frac GA\ge S-(1-e^{-L})(q+sw)+RS(\delta-S).
\end{equation}
If $S\le\delta$, the last term is nonnegative and
\[
 \frac GA\ge S\left(1-\frac Bc\right)
 =S\left(1-(1+1/e)\log(1+t)\right)>0.
\]
For example, the elementary estimates $e>163/60$ and
$\log2<7/10$ bound the factor subtracted from one above by
$1561/1630<1$.
If $S>\delta$, then \eqref{nuf:eq:G} gives
\[
 \frac G{AS}
 >1-\frac B\delta-(\rho+2t)e^{-\delta-1}>0,
\]
using $L\ge S$ and
$\max_{S\ge\delta}(S-\delta)e^{-S}=e^{-\delta-1}$.
Thus $G>0$ and $F>0$ in both cases.
\end{proof}

\begin{proof}[Proof of Theorem~\ref{nuf:thm:main}]
Fix $t,\rho$, and the site activities throughout. Induct on the
number of vertices, asserting the reserve at every root. For a singleton,
$d=1$ and $g:=\E K-\Var K=q^2\ge aq$, since $q\ge a$.
Now let $T_i$ be the child trees of a non-singleton rooted tree,
with rooted parameters $q_i,d_i$, deficits $g_i$, and root-deleted
deficits $g_i^0$. Independence after conditioning on the top root gives
\[
 \frac q{1-q}=\lambda\prod_i(1-q_i),\qquad
 d=1-\sum_iq_id_i,
\]
\[
 g=s\sum_i g_i+q\sum_i g_i^0+q-qs d^2.
\]
The induction hypothesis gives $g_i\ge aq_id_i^2$.
Let $q_{ij},d_{ij}$ be the corresponding parameters of the children
of the root of $T_i$, whose site activity is $\lambda_i$. Then
\[
 L_i^{\rm true}:=\log\frac{\lambda_i(1-q_i)}{q_i}
   =\sum_j-\log(1-q_{ij})\le L_i,
 \qquad
 1-d_i=\sum_jq_{ij}d_{ij}.
\]
The root-deleted graph is a forest of strictly smaller trees, so
Cauchy--Schwarz and induction yield
\[
 g_i^0\ge a\sum_jq_{ij}d_{ij}^2
 \ge a\frac{(1-d_i)^2}{L_i}
\]
when $L_i>0$. If $T_i$ is a singleton the numerator vanishes and
the inequality is immediate, even when $L_i>0$; otherwise the
display follows first with the positive denominator $L_i^{\rm true}$.
If $L_i=0$, then $q_i=A_0$ forces $\lambda_i=t$ and no child vertices,
so $d_i=1$ exactly. Subtracting $aqd^2$ from the mixture formula and
applying Lemma~\ref{nuf:lem:scalar} proves strictness for every
non-singleton. Every nonempty tree has positive deficit, and deficits
add over forest components.

It remains to verify the stated explicit ratio range. The left side of
\eqref{nuf:eq:criterion} is increasing in $t$ and $\rho$ whenever
$\delta>0$, so it suffices to take $t=1$, $\rho=4/3$. The elementary
series bound
$e>1+1+1/2+1/6+1/24+1/120=163/60$ gives
\[
 \delta>\frac{186}{163},\qquad B<\frac{223}{326}.
\]
Using $e^x>1+x+x^2/2$ at $x=23/163$ gives
\[
 \frac B\delta+\frac{10}{3}e^{-\delta-1}
 <\frac{223}{372}
 +\frac{10}{3}\left(\frac{60}{163}\right)^2
  \left(1+\frac{23}{163}+\frac12\left(\frac{23}{163}\right)^2\right)^{-1}
 =\frac{4513559}{4550676}<1.
\]
The logarithm estimate used in the lemma follows from
$e^{7/10}>1+7/10+(7/10)^2/2+(7/10)^3/6>2$.
\end{proof}

\begin{corollary}[Fixed marks and canonical completion]
\label{nuf:cor:canonical}
Let $G$ be a nonempty finite forest and give its vertices fixed marks
$\theta_v>0$ with
\[
 b=\max_v\theta_v\le1,\qquad
 \frac{\max_v\theta_v}{\min_v\theta_v}\le\frac43.
\]
For every $0<x\le1$, the law with weights
$x^{|S|}\prod_{v\in S}\theta_v$ is strictly sub-Poisson.
For all integers $N\ge\alpha(G)$ and $1\le r\le N$, the same is
true of the law on independent sets with weights
\[
 \left(\prod_{v\in S}\theta_v\right)
 \binom{N-|S|}{r-|S|}.
\]
More generally the canonical conclusion holds for $b>1$ whenever
$br/N\le1$.
\end{corollary}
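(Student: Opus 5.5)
The plan is to reduce the first claim directly to Theorem~\ref{nuf:thm:main} with $\rho=4/3$, and then feed it into Lemma~\ref{lem:capacity-transfer}.

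\emph{Fixed-activity claim.} Fix $0<x\le1$ and put $t=xb\le1$. Each vertex then carries the site activity $\lambda_v=x\theta_v\le xb=t$. Since $\min_v\theta_v\ge 3b/4$, we also have $\lambda_v\ge 3t/4=t/\rho$ with $\rho=4/3$. Theorem~\ref{nuf:thm:main} states that criterion \eqref{nuf:eq:criterion} holds for every $0<t\le1$ when $1\le\rho\le4/3$, so it gives $\Var K<\E K$. This is the strict sub-Poisson claim for the law $x^{|S|}\prod_{v\in S}\theta_v$.

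\emph{Canonical claim, case $b\le1$.} Take fixed weights $w(S)=\prod_{v\in S}\theta_v>0$. The previous step shows that $w(S)t^{|S|}$ satisfies $\Var|S|<\E|S|$ for every $0<t\le1$. This is exactly the hypothesis of Lemma~\ref{lem:capacity-transfer}, and that lemma yields the conclusion for the weights $w(S)\binom{N-|S|}{r-|S|}$ for all $N\ge\alpha(G)$ and $1\le r\le N$.

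\emph{Canonical claim, case $b>1$ with $br/N\le1$.} Here the lemma cannot be quoted as stated, because activities $t$ up to $1$ are no longer covered. We will instead rerun its proof.
\begin{itemize}
\item If $r=N$, then $b\le1$, which is the previous case.
\item If $r<N$, the multiplier $\binom{N-k}{r-k}$ is, up to a constant, $(r)_k/(N)_k$. Its consecutive ratios are nonincreasing and at most $r/N$.
\item By likelihood-ratio monotonicity, the conditioned mean is at most the mean of the law $w(S)(r/N)^{|S|}$.
\item Matching means gives an activity $0<t_*\le r/N$.
\item At $t_*$, the site activities $t_*\theta_v$ lie in $[3T/4,T]$ with $T=t_*b\le br/N\le1$. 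So Theorem~\ref{nuf:thm:main} applies at the matched law and gives a strict variance bound there.
\item The conditioned law has a log-concave likelihood ratio with interval support relative to the matched law. Lemma~\ref{lem:relative-lc-variance} therefore gives $\Var_\nu\le\Var_{\pi}<\E_{\pi}=\E_\nu$.
\end{itemize}

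The only delicate point is this last case. One must check that the matched activity stays in the range where the heterogeneous reserve applies, and the bound $t_*\le r/N$ secures exactly that. Everything else is bookkeeping.
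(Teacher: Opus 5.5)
Your proposal is correct and follows the paper's proof: you apply Theorem~\ref{nuf:thm:main} with $t=bx$ and $\rho=4/3$, then mean-match the completion law to an activity $t_*\le r/N$, using $bt_*\le br/N\le1$ to keep the matched reference law within the theorem's range. The only cosmetic difference is that you quote Lemma~\ref{lem:capacity-transfer} directly when $b\le1$ and rerun its argument only for $b>1$, whereas the paper runs the mean-matching argument once for both cases.
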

\begin{proof}
At activity $x$, the site activities lie in $[3bx/4,bx]$.
Theorem~\ref{nuf:thm:main} applies whenever $bx\le1$. This same ratio
bound persists while $x$ decreases: none of the fixed marks is
changed in the mean-matching argument.

For completeness, the consecutive ratios of the completion multiplier
are $(r-k)/(N-k)\le r/N$ when $r<N$. Hence the completed size law
has mean no greater than the fixed-mark law at activity $r/N$.
The latter mean is continuous, strictly increasing with activity,
and tends to zero at zero. There is therefore a unique
$0<x_*\le r/N$ matching the positive completed mean. Relative to
the law at $x_*$, the completion likelihood ratio is log-concave
with interval support. Equal mass and equal mean then make the
completed variance no larger: the difference of the two probability
mass functions is positive on an interval and nonpositive outside,
and summing against $(k-u)(k-v)$ at the two crossing points proves
the second-moment comparison. Since $bx_*\le br/N\le1$, the
reference variance is strictly smaller than its mean. If $r=N$,
the completed law is exactly the fixed-mark law at activity one;
the stated condition requires $b\le1$ and the theorem applies directly.
\end{proof}

\begin{corollary}[Weighted forest coronas are strictly log-concave]
\label{nuf:cor:corona}
Let $H$ be a nonempty finite forest, give its vertices fixed marks
$0<\theta_v\le1$ whose largest-to-smallest ratio is at most $4/3$,
and let $N\ge |V(H)|$ be an integer. Define
\[
 Q(x)=(1+x)^N\sum_{S\text{ independent in }H}
    \left(\prod_{v\in S}\theta_v\right)
    \left(\frac{x}{1+x}\right)^{|S|}
   =\sum_{j=0}^N B_jx^j.
\]
Then for every internal $1\le j<N$,
\begin{equation}\label{nuf:eq:corona-curvature}
 \frac{B_j^2}{B_{j-1}B_{j+1}}>
 \frac{N-j+1}{N-j}>1.
\end{equation}
In graph terms, arbitrary fixed base marks in this range preserve
strict log-concavity for a forest leaf corona, with unweighted private
leaves and arbitrary isolated-vertex padding. The same conclusion holds
for empty $H$, when $Q(x)=(1+x)^N$.
\end{corollary}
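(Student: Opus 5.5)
The plan is to follow the proof of Theorem~\ref{thm:general-transfer} with $c=1$ and $M=N$, replacing the unweighted sub-Poisson input by Corollary~\ref{nuf:cor:canonical}. Put $a_k=\sum_{|S|=k}\prod_{v\in S}\theta_v$ for $0\le k\le d=\alpha(H)$. Then
\[
 B_j=\binom Nj T_j,\qquad T_r=\sum_k a_k\frac{(r)_k}{(N)_k},
\]
and $\nu_r(k)\propto a_k(r)_k/(N)_k$ is precisely the size law of the completion weights $\bigl(\prod_{v\in S}\theta_v\bigr)\binom{N-|S|}{r-|S|}$, up to normalization. For $r=j+1\ge2$ write $\mu,v$ for its mean and variance and set $m=r-\mu$. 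The binomial identities used for~\eqref{eq:identity} depend only on the coefficients $a_k$, so they give
\[
 \frac{B_j^2}{B_{j-1}B_{j+1}}=\frac{N-j+1}{N-j}\frac{m^2}{m^2-m+v},
\]
and the denominator is positive as before, since $\nu_r(0)>0$.

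It then suffices to show $v<m$. The second conclusion of Corollary~\ref{nuf:cor:canonical} applies directly, since $b\le1$, $N\ge|V(H)|\ge\alpha(H)$, and $1\le r\le N$. It gives $v<\mu$.

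The remaining step, which I expect to be the only real obstacle, is the bound $\mu\le m$, i.e. $\mu\le r/2$; this is where the marks enter. I would use weighted extension counting. Each independent set of size $k+1$ arises from at most $k+1$ sets of size $k$ by adding one vertex. Each added vertex has mark at most $b\le1$, and there are at most $h-k$ candidates, where $h=|V(H)|$. Hence
\[
 (k+1)a_{k+1}\le b(h-k)a_k\le(h-k)a_k.
\]
Multiplying by the falling-factorial weights and using $N\ge h$ gives
\[
 (k+1)\nu_r(k+1)\le\frac{h}{N}(r-k)\nu_r(k),
\]
and summing yields $\mu\le(h/N)(r-\mu)\le m$. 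This is the same step as $\mu\le\rho m$ with $\rho\le1$.

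Combining $v<\mu\le m$ gives $m^2-m+v<m^2$, so the ratio exceeds $(N-j+1)/(N-j)>1$. The indices $N\le1$ are vacuous. For empty $H$ one has $Q(x)=(1+x)^N$, whose ratio is $\frac{N-j+1}{N-j}\frac{j+1}{j}$, which is larger still.
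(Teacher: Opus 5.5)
Your proposal is correct and is essentially the paper's own proof: it invokes Corollary~\ref{nuf:cor:canonical} to get $v<\mu$ for the completion law $\nu_r$. It then uses weighted insertion counting $(k+1)a_{k+1}\le(h-k)a_k$ together with $N\ge h$ to obtain $\mu\le r/2$, and concludes from the exact ratio identity because $m-v>r-2\mu\ge0$. Your intermediate factor $h/N$ is only a slightly sharper form of the same summation step, and one wording point should be fixed: in the double count each $(k+1)$-set arises from exactly (not at most) $k+1$ sets of size $k$, which is what the inequality needs.
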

\begin{proof}
Write $c_k=\sum_{|S|=k}\prod_{v\in S}\theta_v$ and $n=|V(H)|$.
If $H$ is empty, $Q(x)=(1+x)^N$ and the result follows directly.
Otherwise fix $r=j+1$ and let $\nu$ be the size law proportional to
$c_k\binom{N-k}{r-k}$. Write $\mu=\E_\nu K$, $v=\Var_\nu K$,
and $m=r-\mu$. Corollary~\ref{nuf:cor:canonical} gives $v<\mu$.
Weighted insertion counting gives
\[
 (k+1)c_{k+1}
 =\sum_{\substack{S\text{ independent}\\|S|=k}}
   \left(\prod_{u\in S}\theta_u\right)
   \sum_{w:\,S\cup\{w\}\text{ independent},\,w\notin S}\theta_w
 \le(n-k)c_k.
\]
Because $N\ge n$, multiplication by the consecutive completion
ratio $(r-k)/(N-k)$, for $k<N$, yields
$(k+1)\nu(k+1)\le(r-k)\nu(k)$. At $k=N$, both sides vanish
(with zero extension), so no division is needed. Summing gives $\mu\le r/2$.
The two consecutive binomial identities give exactly
\[
 \frac{B_j}{B_{j+1}}=\frac m{N-j},\qquad
 \frac{B_{j-1}}{B_{j+1}}
 =\frac{m^2-m+v}{(N-j)(N-j+1)}.
\]
All $B_j$ are positive because of the empty-set term $(1+x)^N$,
so the denominator $m^2-m+v$ is positive. Consequently
\[
 \frac{B_j^2}{B_{j-1}B_{j+1}}
 =\frac{N-j+1}{N-j}\frac{m^2}{m^2-m+v}
 >\frac{N-j+1}{N-j},
\]
since $m-v>r-2\mu\ge0$.
\end{proof}

\paragraph{The larger sufficient ratio furnished by this proof.}
For all $0<t\le1$, one may replace $4/3$ by any fixed $\rho\ge1$
satisfying
\[
 \frac{1+1/e}{2\{2-(\rho+1)/e\}}
 +(\rho+2)e^{-3+(\rho+1)/e}<1.
\]
The expression is strictly increasing until its denominator vanishes.
Its unique value-one crossing is approximately $1.3538$ (the exact
root definition is authoritative). The endpoint itself is not claimed.
This is a sufficient constant from the estimates, not the optimal
heterogeneous-tree ratio. At smaller upper activity the two-parameter
criterion \eqref{nuf:eq:criterion} is stronger. Neither this theorem nor
its canonical consequence asserts arbitrary-tree homogeneous quadratic
compensation, arbitrary two-cut BL, or unrestricted forest BL.

\section{A doubled rooted reserve for every nontrivial tree}
\label{cld:sec:doubled-reserve}
The results in this section strengthen the ordinary rooted reserve of
Lemma~\ref{lem:forest-subpoisson} for every unweighted tree with at least
two vertices. The proof retains a local conditional-deficit envelope and uses three
attachment lemmas followed by strong induction. The logarithmic and rational
deletion floors used below are Lemma~\ref{lem:four-root-floor} and
Corollary~\ref{cor:four-center-deletion}; those proofs use the earlier
ordinary reserve, so there is no circular dependence on the doubled reserve.

\subsection{Optimal local retention of the two conditional deficits}
\label{cld:sec:optimal-local-retention}
The following two-variable optimization retains the relation between
the two conditional deficits. Its use in the preservation lemmas is
pointwise at the fixed activity.

Let a rooted piece have root occupation probability $q\in(0,1)$,
conditional size-mean difference $d$, and deficit
$g=\mathbb EK-\operatorname{Var}K$. Write $g^0$ for the deficit after
root deletion and $g^1$ for the deficit after deleting the closed root
neighborhood. Thus the occupied-root conditional deficit is $1+g^1$.
The exact mixture identity is
\begin{equation}\label{cld:eq:optimal-local-mixture}
 g=(1-q)g^0+q(1+g^1)-q(1-q)d^2.
\end{equation}
The root-deleted and closed-neighborhood-deleted graphs are unweighted
forests when only the root carries a fixed mark. In particular $g^1\ge0$.

\begin{proposition}[Optimal local conditional-deficit envelope]
\label{cld:prop:optimal-local-envelope}
Suppose
\[
 g\ge\kappa qd^2,\qquad g^0\ge\ell(d),\qquad g^1\ge0,
\]
where $\ell(d)\ge0$. For any new root marginal $0\le p\le q$, define
\[
 \mathcal B_{p,q,\kappa,\ell}(d)=
 \max\left\{(1-p)\ell(d)+p,
 \left(1-\frac pq\right)\ell(d)
       +p(\kappa+1-q)d^2\right\}.
\]
Then
\[
 (1-p)g^0+p(1+g^1)\ge\mathcal B_{p,q,\kappa,\ell}(d).
\]
Moreover, this is the best lower bound that follows from exactly the
three displayed inequalities and the mixture identity, with $d,p,q$
and $\kappa$ fixed and the two conditional deficits otherwise free.
This optimality is for the stated relaxation; it does not claim that
all minimizing data are realized by rooted trees.
\end{proposition}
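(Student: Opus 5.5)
The quantity $(1-p)g^0+p(1+g^1)$ is affine in the pair $(g^0,g^1)$ with nonnegative coefficients. So the whole statement is a two-variable linear program. The variables $X=g^0$ and $Y=g^1$ range over the polyhedron
\[
 X\ge\ell(d),\qquad Y\ge0,\qquad (1-q)X+q(1+Y)-q(1-q)d^2\ge\kappa qd^2 ,
\]
where the last inequality is the hypothesis $g\ge\kappa qd^2$ rewritten through the mixture identity \eqref{cld:eq:optimal-local-mixture}. The plan is to show that the minimum of $\Phi=(1-p)X+p(1+Y)$ over this region equals $\mathcal B_{p,q,\kappa,\ell}(d)$. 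The lower bound is the first claim and attainment is the optimality claim.

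\textbf{Lower bound.} First, $X\ge\ell(d)$ and $Y\ge0$ give directly $\Phi\ge(1-p)\ell(d)+p$. For the second branch, take the nonnegative combination of constraints
\[
 \frac pq\Bigl[(1-q)X+qY+q-q(1-q)d^2-\kappa qd^2\Bigr]
 +\Bigl(1-\frac pq\Bigr)\bigl[X-\ell(d)\bigr]\ge0 .
\]
This combination is admissible because $0\le p\le q$. The coefficient of $X$ is $\tfrac pq(1-q)+1-\tfrac pq=1-p$, and the coefficient of $Y$ is $p$. Rearranging gives
\[
 \Phi\ge\Bigl(1-\frac pq\Bigr)\ell(d)+p(\kappa+1-q)d^2 .
\]
Taking the larger of the two bounds gives $\mathcal B$.

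\textbf{Optimality.} I will exhibit feasible points attaining each branch, according to which branch is larger.

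Case 1: the first branch dominates. Put $X=\ell(d)$ and choose $Y\ge0$ minimal subject to the mixture constraint, i.e.
\[
 Y=\max\Bigl\{0,\ (\kappa+1-q)d^2-1-\frac{1-q}{q}\ell(d)\Bigr\}.
\]
If the maximum is $0$, then $\Phi=(1-p)\ell(d)+p$. If the maximum is the second term, the mixture constraint is tight together with $X=\ell(d)$, so $\Phi$ equals the second branch. In either subcase $\Phi$ is at most $\mathcal B$, hence equal to it.

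Case 2: the constraint $Y\ge0$ would be violated at $X=\ell(d)$. Then instead set $Y=0$ and raise $X$ until the mixture constraint is tight; $\Phi$ then equals whichever branch is larger. The exact case split should be read off from the sign of
\[
 (\kappa+1-q)d^2-1-\frac{1-q}{q}\ell(d).
\]
In each subcase $\Phi$ is an affine combination of values that are each at most $\mathcal B$, which gives $\Phi=\mathcal B$.

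The degenerate endpoints $p=0$ and $p=q$ need a separate check, since the combination weights $p/q$ or $1-p/q$ vanish there; the formula still holds at both.

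\textbf{Main obstacle.} The delicate step is the optimality bookkeeping in the vertex case split, namely confirming which feasible vertex is active in each regime. The lower bound itself is a routine LP duality computation.
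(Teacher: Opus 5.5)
Your argument is correct, but not for the reason you give. The lower bound holds. The multipliers $p/q$ and $1-p/q$ on the mixture constraint and on $X\ge\ell(d)$ are nonnegative exactly because $0\le p\le q$ and $q>0$, and they produce the second branch. The endpoints $p=0$ and $p=q$ need no separate check, since a zero weight in a nonnegative combination is harmless.

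Your optimality argument is complete through Case~1 alone, because that construction never uses the hypothesis that the first branch dominates. The point $X=\ell(d)$, $Y=\max\{0,(\kappa+1-q)d^2-1-\frac{1-q}{q}\ell(d)\}$ is feasible for all data. Its value equals one of the two branches, so it is at most $\mathcal B$, and it equals $\mathcal B$ by the lower bound.

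Case~2 should be deleted rather than repaired. When $(\kappa+1-q)d^2-1-\frac{1-q}{q}\ell(d)<0$, the mixture constraint is already slack at $(\ell(d),0)$. Raising $X$ only increases that slack, since its coefficient is $1-q>0$. It also increases $\Phi$, since that coefficient is $1-p>0$. So ``raise $X$ until the constraint is tight'' cannot be carried out, and it is never needed. The ``delicate vertex bookkeeping'' you flag as the main obstacle does not exist.

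The paper organizes the same computation by elimination. It fixes $x=g^0$ and minimizes $y=1+g^1$ explicitly. This gives the objective as $\max\{(1-p)x+p,\,(1-p/q)x+pA/q\}$ with $A=q(\kappa+1-q)d^2$. Both affine pieces are nondecreasing in $x$ because $p\le q$, so the minimum sits at $x=\ell(d)$. That single monotonicity observation gives the bound and its sharpness together, and it explains why no case split arises.

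Your primal--dual version buys an explicit certificate instead. It shows the second branch as a nonnegative combination of the hypotheses. That is exactly how the envelope is used later, through the outer convex weights $\eta$ in the attachment theorems.
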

\begin{proof}
Put $x=g^0$, $y=1+g^1$, and
$A=q(\kappa+1-q)d^2$. The constraints become
\[
 x\ge\ell(d),\qquad y\ge1,\qquad (1-q)x+qy\ge A.
\]
For a fixed $x$, the minimizing choice is
$y=\max\{1,A/q-(1-q)x/q\}$. Hence the objective equals
\[
 \max\left\{(1-p)x+p,
 \left(1-\frac pq\right)x+\frac pq A\right\}.
\]
Both affine functions are nondecreasing in $x$, because $p\le q$.
The minimum is therefore attained at $x=\ell(d)$ and is precisely
$\mathcal B_{p,q,\kappa,\ell}(d)$.
\end{proof}

For disjoint rooted pieces coupled only through their root states, let
$p_i$ be the new marginals and $C$ their root-state covariance. Conditional
independence gives the exact identity
\[
 g_{\rm assembled}=\sum_i[(1-p_i)g_i^0+p_i(1+g_i^1)]-d^\top C d.
\]
Whenever $p_i\le q_i$, the proposition gives a valid lower bound for
each conditional contribution. Convex combinations of its two branches
remain valid; no two lower bounds are simply added.

\subsection{A proved leaf-extension preservation step}
\begin{theorem}[Same-root leaf extension of the doubled reserve]\label{cld:thm:same-root-leaf-doubled-reserve}
Fix $0<t\le1$ and write $a=t/(1+t)$. Let $(T,o)$ be a nonempty
unweighted rooted tree, with root occupation probability $q$, conditional
size-mean difference $d$, and deficit $g=\mathbb EK-\operatorname{Var}K$.
Assume the stronger rooted reserve
\[
 g\ge2a qd^2.
\]
Attach one new private leaf to $o$ and retain $o$ as the root. The new
tree has root probability $p$ and conditional response $d'=d-a$, and
its deficit satisfies
\[
 g_{\mathrm{new}}>2a p(d')^2.
\]
This is a preservation statement conditional on the displayed reserve
for $T$. It does not assert that every nontrivial rooted tree already
satisfies that reserve, nor does it cover re-rooting at the new leaf.
\end{theorem}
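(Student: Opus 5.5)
The plan is to compute the new tree's deficit exactly by conditioning on the root state, and then reduce $g_{\mathrm{new}}>2ap(d')^2$ to positivity of a one-variable quadratic in $d$.

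\textbf{Exact bookkeeping.} Let $\ell$ be the new leaf, $T'=T+o\ell$, and write $y=q/(1-q)$ for the old root odds. Conditioning on $o$:
\begin{itemize}
\item If $o$ is occupied, $\ell$ is absent. So the closed-neighborhood-deleted graph of $T'$ is the same forest as for $T$, and $g^1_{\rm new}=g^1$.
\item If $o$ is unoccupied, $\ell$ is an independent Bernoulli$(a)$ vertex. This adds $a$ to the conditional mean and $a(1-a)$ to the conditional variance, so $g^0_{\rm new}=g^0+a^2$.
\end{itemize}
Hence $d'=d-a$, and the new root odds are $p/(1-p)=(1-a)y$, so $0<p<q$. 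The mixture identity \eqref{cld:eq:optimal-local-mixture} gives
\[
 g_{\rm new}=(1-p)(g^0+a^2)+p(1+g^1)-p(1-p)(d-a)^2 .
\]
So it suffices to prove
\[
 (1-p)g^0+p(1+g^1)+(1-p)a^2-p(1+2a-p)(d-a)^2>0 .
\]

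\textbf{Lower bound on the conditional contributions.} I will apply Proposition~\ref{cld:prop:optimal-local-envelope} with $\kappa=2a$, which is valid because $p\le q$. Its two constraints come from the hypothesis $g\ge2aqd^2$ and the old mixture identity. For $\ell(d)$ I take the deletion floor \eqref{d2four:eq:quarter-deletion-floor} or \eqref{d2four:eq:activity-deletion-floor} of Corollary~\ref{cor:four-center-deletion}:
\[
 g^0\ge y(1-d)^2+c_t(t-y)/(t+y),
\]
with $c_t>0$ explicit. This replaces $(1-p)g^0+p(1+g^1)$ by a convex combination, with weight $\lambda\in[0,1]$ to be chosen, of the two branches
\[
 (1-p)\ell(d)+p
 \qquad\text{and}\qquad
 \Bigl(1-\tfrac pq\Bigr)\ell(d)+p(2a+1-q)d^2 .
\]
Only convex combinations are allowed, never sums, as the remark after the proposition stresses. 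The target becomes a quadratic $\Phi_\lambda(d)$, whose coefficients depend only on $t$, $y$ (through $q=y/(1+y)$ and $p=(1-a)y/(1+(1-a)y)$) and $\lambda$. I need to show that for each $(t,y)$ with $0<y\le t\le1$ some $\lambda$ makes $\Phi_\lambda$ positive on all of $\mathbb R$. No sign or range of $d$ is assumed.

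\textbf{Coefficients of the quadratic.} Using the second branch alone, the $d^2$ coefficient is
\[
 p(2a+1-q)-p(1+2a-p)+\Bigl(1-\tfrac pq\Bigr)y=p(p-q)+\Bigl(1-\tfrac pq\Bigr)y .
\]
A short computation with the odds relations shows $(1-p/q)y\ge p(q-p)$, with strict inequality. So the leading coefficient is positive. The first branch contributes the constant $p$ and the term $(1-p)y(1-d)^2$. Both are needed to control the linear term coming from $(d-a)^2$ and the $-2(1-p/q)y\,d$ term of $\ell$.

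\textbf{Main obstacle and fallback.} The main obstacle will be the discriminant condition, uniformly over $0<y\le t\le1$ with a good choice of $\lambda$. I would first try $\lambda$ proportional to $p/q$, or choose $\lambda$ optimally by the standard completion $\min_d\max_\lambda=\max_\lambda\min_d$. This reduces the claim to a two-variable inequality in $(t,y)$.

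The positive constants $(1-p)a^2$, $\lambda p$ and $c_t(t-y)/(t+y)$ should dominate the squared linear term. The tight regime looks like $y\to t$, i.e.\ $T$ close to a singleton, where the $L$-floor vanishes. There I would check the singleton base case directly ($q=a$, $d=1$, $g=a^2$, $g^0=g^1=0$), which the constraint $d=1$ at $L=0$ forces, and use continuity plus strict monotonicity in $y$ elsewhere.

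If the envelope is too weak somewhere, the fallback is the sharper floor $g^0\ge (a/L)(1-d)^2$ from Corollary~\ref{cor:four-center-deletion}. Together with $L=\log(t/y)$ and $Le^{-L}\le e^{-1}$, it gives a larger $d^2$ coefficient when $y$ is small.
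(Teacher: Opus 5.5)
Your reduction is correct and is the paper's route. Your mixture formula, with $g^0_{\mathrm{new}}=g^0+a^2$, $g^1_{\mathrm{new}}=g^1$ and $d'=d-a$, agrees with the paper's two-piece identity: there the leaf contributes $v=a(1-p)$, and after subtracting $d^\top Cd$ the terms beyond $(1-p)g^0+p(1+g^1)$ total $v^2+2pvd-p(1-p)d^2$, which equals your $(1-p)a^2-p(1-p)(d-a)^2$. You then apply Proposition~\ref{cld:prop:optimal-local-envelope} with $\kappa=2a$, take a convex combination of its branches, and reduce to positivity of a quadratic in $d$ on all of $\mathbb R$. Your extra constant in $\ell(d)$ is legitimate and only helps; the paper uses just $g^0\ge y(1-d)^2$. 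Your leading-coefficient check for the second branch is also correct.

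The gap is that you stop where the content of the proof lies. You never fix the weight, and you never prove the discriminant condition on the whole range $0<q\le a\le\frac12$. This is not routine. Put weight $\lambda$ on the first branch and let $y\to0$ with $t<1$. The $d^2$ coefficient of the combination is then $ay\,[1-2\lambda(1-a)]+O(y^2)$, so every weight above $1/(2(1-a))$ already fails. Nothing in your heuristics (``the positive constants should dominate'', continuity, monotonicity in $y$) shows that some weight works everywhere.

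Your fallback near $y=t$ is also misdirected. A singleton does not satisfy the hypothesis, since $g=a^2<2a^2=2aqd^2$, so there is no base case to check there. Moreover, positivity at $d=1$ on the face $y=t$ gives no control over arbitrary real $d$ at nearby $y$, and your relaxation must cover those.

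The paper supplies the missing step with an explicit choice and a certificate. It puts weight $\eta=2a/(1+2a-q)$ on the first branch, so that $(1-\eta)(1+2a-q)=1-q$. The old piece then contributes at least $p(1-q)d^2+p(t+\eta)(1-d)^2+\eta p$, and the target difference becomes $p\,[Ad^2+2Bd+C]$ with
$A=t+\eta+p-q-2a$, $B=-t-\eta+v+2a^2$ and $C=t+2\eta+v^2/p-2a^3$.
Positivity of $A$ and of $AC-B^2$ is proved by the substitution $a=x/(2(1+x))$, $q=az/(1+z)$ with $x,z>0$. This turns both numerators into polynomials with only positive coefficients. Explicit formulas then handle the faces $a=\frac12$ and $q=a$.

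Until you give a concrete weight and verify this two-parameter inequality, by such a certificate or another complete argument, your proposal is an outline of the paper's proof rather than a proof.
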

\begin{proof}
Let $g^0$ and $g^1$ be the deficits after deleting the old root and
its closed neighborhood, respectively. The latter is nonnegative.
The established root-deletion estimate gives
\[
 g^0\ge r(1-d)^2,\qquad r=\frac q{1-q}.
\]
For a singleton this is interpreted on its actual response $d=1$;
otherwise it follows from the unweighted Cauchy--Schwarz deletion
reserve and $Le^{-L}\le e^{-1}$.

The old root and the new leaf cannot both be occupied. Their new
marginals are
\[
 p=\frac{q(1-a)}{1-aq},\qquad
 v=\frac{a(1-q)}{1-aq}.
\]
Apply Proposition~\ref{cld:prop:optimal-local-envelope} with
$\kappa=2a$ and choose outer convex weight
\[
 \eta=\frac{2a}{1+2a-q}.
\]
Since $0<q\le a\le1/2$, this belongs to $(0,1)$.
The identity $(1-\eta)(1+2a-q)=1-q$ shows that the old piece's
conditional contribution, after its marginal changes to $p$, is at least
\[
 p(1-q)d^2+(\alpha+\eta p)(1-d)^2+\eta p,
 \qquad
 \alpha=r-(1+r)p=tp,
 \quad t=\frac a{1-a}.
\]
The leaf's contribution is exactly $v$. The root-state covariance
has diagonal $p(1-p),v(1-v)$ and off-diagonal $-pv$. Therefore
\begin{align*}
 g_{\mathrm{new}}
 &\ge p(1-q)d^2+p(t+\eta)(1-d)^2+\eta p
       -p(1-p)d^2+v^2+2pvd.
\end{align*}
Root conditioning gives $d'=d-a$ exactly. Subtracting its target
reserve and collecting terms yields
\[
 g_{\mathrm{new}}-2ap(d-a)^2
 \ge p\,[Ad^2+2Bd+C],
\]
where
\[
 A=t+\eta+p-q-2a,\qquad
 B=-t-\eta+v+2a^2,\qquad
 C=t+2\eta+\frac{v^2}{p}-2a^3.
\]
We now certify $A>0$ and $AC-B^2>0$ throughout
$0<q\le a\le1/2$.

For the interior substitute
\[
 a=\frac{x}{2(1+x)},\qquad q=\frac{ay}{1+y},
 \qquad x,y>0.
\]
Define the positive polynomials
\[
 R=3xy+4x+2y+2,\qquad
 S=3x^2y+4x^2+8xy+8x+4y+4.
\]
Exact expansion gives
\[
 A=\frac{P_A(x,y)}{2(x+1)(x+2)(y+1)RS},
\]
\[
 AC-B^2=\frac{P_D(x,y)}
 {8y(x+1)^4(x+2)^2(y+1)R^2S}.
\]
The two numerator certificates are
\[
 \begin{array}{c|c|r|r}
 &\text{bidegree}&\text{positive integer coefficients}&\text{minimum}\\\hline
 P_A&(5,3)&20&9\\
 P_D&(10,5)&54&111
 \end{array}
\]
Every nonzero coefficient is positive. The complete arrays are in
\path{leaf_extension_certificate.json}; the corresponding script
reconstructs the rational expressions and verifies the inverse
substitution by polynomial cross-multiplication. Thus both quantities
are strictly positive in the interior.

The activity-one face $a=1/2$, parameterized by
$q=y/[2(1+y)]$, has
\[
 A=\frac{3y^2+6y+4}{2(y+1)(3y+4)},\qquad
 AC-B^2=
 \frac{37y^4+176y^3+316y^2+240y+64}
 {8y(y+1)(3y+4)^2}>0.
\]
For completeness the face $q=a$, parameterized by
$a=x/[2(1+x)]$, has
\[
 A=\frac{x(3x^2+4x+4)}{2(x+1)(x+2)(3x+2)},
\]
\[
 AC-B^2=
 \frac{x^2(37x^5+130x^4+196x^3+184x^2+112x+32)}
 {8(x+1)^4(x+2)(3x+2)^2}>0.
\]
At their intersection the values are $A=1/2$ and $AC-B^2=37/72$.
All required boundary cases are therefore included.

It follows that the quadratic $Ad^2+2Bd+C$ is strictly positive
for every real $d$. Since $p>0$, the desired strict reserve follows.
\end{proof}

\subsection{A new-leaf root preserves the doubled reserve}
\begin{theorem}[New-leaf-root preservation]
\label{cld:thm:new-leaf-doubled-reserve}
Fix $0<t\le1$ and put $a=t/(1+t)$. Let $(Y,o)$ be a nontrivial
unweighted rooted tree with root probability $q$, conditional size-mean
difference $d$, and deficit $g_Y=\mathbb EK_Y-\operatorname{Var}K_Y$.
Assume $g_Y\ge2a qd^2$. Attach a new leaf $z$ to $o$ and choose $z$
as root. Its root probability $p$ and response $d_z=1-qd$ satisfy
\[
 g_{Y+z}>2a p(1-qd)^2.
\]
For singleton $Y$, the conclusion holds directly for the resulting
$K_2$, without assuming a doubled reserve on the singleton.
\end{theorem}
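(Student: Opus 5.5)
The plan mirrors the same-root leaf extension (Theorem~\ref{cld:thm:same-root-leaf-doubled-reserve}), but now the conditioning is on the new root $z$. The first step is to condition on whether $z$ is occupied. If $z$ is occupied, then $o$ is forced out, and the residual law is the hard-core law on $Y-o$, whose deficit is $g^0$. If $z$ is unoccupied, the residual law is exactly that of $(Y,o)$, with deficit $g_Y$. Hence
\[
 g_{Y+z}=(1-p)g_Y+p(1+g^0)-p(1-p)d_z^2 ,
\]
with $p/(1-p)=t(1-q)$, so that $p=t(1-q)/(1+t(1-q))\le a$. The response is $d_z=1+\E K_{Y-o}-\E K_Y=1-qd$, which follows because $\E K_Y=\E K_{Y-o}+qd$ by conditioning on $o$.

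The second step feeds in three lower bounds:
\begin{itemize}
\item $g_Y\ge 2aqd^2$, the hypothesis;
\item $g^0\ge r(1-d)^2$ with $r=q/(1-q)$, from Corollary~\ref{cor:four-center-deletion} (the rational root-deletion floor);
\item $g^0\ge \frac{t}{1+2t}L$ with $L=\log(t/r)$, together with the rational version $\frac{t(t-r)}{(1+2t)(t+r)}$.
\end{itemize}
I will use a convex combination of the two $g^0$ bounds, as in \eqref{d2four:eq:quarter-deletion-floor}. I will also keep the alternative weighting of Proposition~\ref{cld:prop:optimal-local-envelope}, applied to $(Y,o)$ itself with $\kappa=2a$: there $g_Y\ge 2aqd^2$ and $g_Y=(1-q)g^0+q(1+g^1)-q(1-q)d^2$ can be traded against each other. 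This is needed because the pure hypothesis bound loses the additive constant $q$ that the mixture supplies.

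Subtracting the target $2ap(1-qd)^2$ leaves a quadratic in $d$ whose coefficients are rational in $(a,q)$, plus a $d$-free nonnegative term coming from the logarithmic floor. Concretely, the reduced inequality is
\[
 (1-p)2aqd^2+p\,r(1-d)^2+p-p(1-p+2a)(1-qd)^2>0.
\]
Its leading coefficient is $2aq(1-p)+pr-pq^2(1-p+2a)$. The constant term is $pr+p-p(1-p+2a)=p(r+p-2a)$, and this can be negative when $q$ is small, since then $r\approx q$ and $p\approx a$. This is exactly why an extra constant is required, and it is where the logarithmic floor $\tfrac{t}{1+2t}L$, or the envelope's $+p$ branch, enters. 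Since $L=\log(t/r)$ is large when $q$ is small, the deficiency is compensated there.

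The third step, which I expect to be the main obstacle, is the certification of positivity. I would substitute $a=x/(2(1+x))$ and $q=ay/(1+y)$ as before, replace $L$ by the rational lower bound $2(t-r)/(t+r)$, and fix the convex weight between the two $g^0$ bounds as an explicit rational function, for instance an analogue of $\eta=2a/(1+2a-q)$. I would then require $A>0$ and $AC-B^2>0$, clearing denominators into polynomials in $x,y$ and hoping for positive coefficients, with the faces $a=1/2$ and $q=a$ checked separately. If the coefficients are not all positive, I would first try tuning the convex weight; failing that, I would split the range of $q$ into small and large values and use the logarithmic floor only in the small-$q$ region.

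The singleton case $Y=K_1$ is handled separately, by direct computation on $K_2$: there $p=a(1-a)/(1-a^2)\cdot\ldots$, $d_z=1-a$, and the deficit is explicit. The statement then reduces to a one-variable inequality in $a\in(0,1/2]$, which can be checked by hand.
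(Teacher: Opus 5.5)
Your conditioning identity and $d_z=1-qd$ are correct. Once you trade $g_Y$ through the envelope with weight $\eta=2a/(1+2a-q)$, your bound coincides term by term with the paper's two-piece assembly: the old root gets the new marginal $h=(1-p)q$, the leaf gets $p$, and the covariance is $-ph$. The total coefficient of $g^0$ becomes $W=(1-p)\eta(1-q)+p$, and the $d^2$ terms inherited from $g_Y$ cancel. So the architecture is the paper's.

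\textbf{The gap.} It sits in the step you defer, and it is not bookkeeping. With the deletion inputs and weights you name, the relaxed inequality is false on parts of the domain, so no positivity certificate can exist for it.
\begin{itemize}
\item \emph{Small $q$.} You justify the missing constant by the size of $L$ when $q$ is small, but you then replace $L$ by $2(t-r)/(t+r)<2$. That caps the logarithmic contribution at $2t/(1+2t)$. At $t=1$, $d=1$, $q\to0$, your lower bound for $g_{Y+z}-2ap(1-qd)^2$ tends to $W\ell(1)-a^2$. Here $W\le1$ for every envelope weight, and $W\to3/4$ for the $\eta$ above. So the $d$-free part of the deletion floor must exceed $1/4$ (even $1/3$ with that $\eta$). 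The rational form of \eqref{d2four:eq:quarter-deletion-floor} gives only $1/6$ there.
\item \emph{Intermediate $q$.} Take $t=1$, $r=4/25$ (so $q=4/29$), $d=-7/10$, and the paper-style weight $\eta=2a/(1+2a-q)$. Replace $g^0$ even by the pointwise maximum $\max\{r(1-d)^2,\,2t(t-r)/[(1+2t)(t+r)]\}$. The resulting lower bound for $g_{Y+z}-2ap(1-qd)^2$ is still about $-0.0094$.
\end{itemize}
To rescue your scheme you would have to let both the envelope weight and the deletion weight vary with $(t,q)$, and then certify that two-weight family. Your plan does not do this.

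\textbf{What the paper does instead.} The missing idea is its choice of deletion input and coordinates.
\begin{itemize}
\item Write the old root odds as $s=tv^2$. Then $L=-2\log v$, and both $L\le(1-v^2)/v$ and $L\ge4(1-v)/(1+v)$ are rational in $v$. Your $(x,y)$ coordinates cannot express these without $\sqrt{r/t}$.
\item Average the Cauchy--Schwarz reserve $a(1-d)^2/L$ itself, not its weakening $r(1-d)^2$, with the logarithmic floor $bL$, where $b=t/(1+2t)$. This gives $g^0\ge J(1-d)^2+F$ with $J=av/(2(1-v^2))$ and $F=2b(1-v)/(1+v)$.
\item $J$ blows up as $q\to a$. $F\to2b$ as $q\to0$, which is the value your substitution reaches only with full weight on the logarithmic floor.
\end{itemize}
With these inputs the single weight $\eta$ suffices. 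The two leading minors of the $2\times2$ form factor as $t^2vP_1/D_1$ and $t^4vP_2/D_2$, and $P_1,P_2$ have strictly positive tensor Bernstein coefficients on $[0,1]^2$. Your singleton computation for $K_2$ is fine.
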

\begin{proof}
Write the old root odds as $s=tv^2$, where $0<v<1$ because $Y$
is nontrivial, and put
\[
 Z=1+t+s,\quad q=s/(1+s),\quad p=t/Z,\quad h=s/Z,
 \quad b=t/(1+2t),\quad L=-\log(v^2).
\]
Thus $p,h$ are the new marginals of the leaf and old root.
The elementary logarithmic inequalities imply
\[
 L\ge4(1-v)/(1+v),\qquad L\le(1-v^2)/v.
\]
For the latter, differentiate $v^{-1}-v+2\log v$, whose derivative
is $-(1-v)^2/v^2$ and whose value at one is zero.
The former is the standard inequality
$\log u\ge2(u-1)/(u+1)$ applied to $u=1/v$.
Combining half of each established deletion reserve
$g_Y^0\ge a(1-d)^2/L$ and $g_Y^0\ge bL$ gives
\[
 g_Y^0\ge J(1-d)^2+F,
 \qquad J=\frac{av}{2(1-v^2)},\qquad
 F=\frac{2b(1-v)}{1+v}.
\]
Apply Proposition~\ref{cld:prop:optimal-local-envelope} with $\kappa=2a$ and
outer weight $\eta=2a/(2a+1-q)$. Its deletion coefficient is
\[
 W=\eta(1-h)+(1-\eta)(1-h/q)=\frac{t+\eta}{Z}.
\]
Since $(1-\eta)(2a+1-q)=1-q$, the old piece contributes at least
\[
 W[J(1-d)^2+F]+\eta h+h(1-q)d^2
\]
to the conditional deficit. The new leaf contributes exactly $p$.
The two root indicators have covariance $-ph$ and variances
$p(1-p),h(1-h)$. Root conditioning gives $d_z=1-qd$ exactly.

Put $B=WJ$, $C_*=WF+\eta h$, and $T=p(h+2aq)$.
Using $h-q=-pq$ and subtracting the target reserve yields
\[
 g_{Y+z}-2ap(1-qd)^2\ge(d,1)\mathcal N(d,1)^\top,
 \qquad
 \mathcal N=\begin{pmatrix}
 B-qT&T-B\\T-B&B+C_*+p^2-2ap
 \end{pmatrix}.
\]
Its leading minors have exact factorizations
\[
 \mathcal N_{11}=t^2vP_1/D_1,
 \qquad\det\mathcal N=t^4vP_2/D_2,
\]
where
\begin{align*}
 D_1={}&2(t+1)(1-v)(1+v)(tv^2+1)^2
 (tv^2+t+1)^2(2t^2v^2+3t+1),\\
 D_2={}&2(t+1)(2t+1)(1-v)(1+v)^2(tv^2+1)^2
 (tv^2+t+1)^3(2t^2v^2+3t+1)^2.
\end{align*}
Both denominators are positive. The complete tensor Bernstein
certificates on $[0,1]^2$, in variable order $(t,v)$, are
\[
 \begin{array}{c|c|r|c}
 &\text{bidegree}&\text{positive coefficients}&\text{minimum}\\\hline
 P_1&(5,9)&60&3\\
 P_2&(7,14)&120&3503/429
 \end{array}
\]
Every coefficient is strictly positive, with no zero or negative
entries. Thus both polynomials are positive on the entire closed
square, including $t=1$. The exact rational arrays are
\path{new_leaf_bernstein_minor_1.json} and
\path{new_leaf_bernstein_minor_2.json}. The reproducible
\path{new_leaf_certificate.py} verifies the conditional quadratic,
the rational minors, Bernstein conversion and its inverse, and
agreement with a separate construction. The independent probability
and certificate audit is \path{audit_newleaf.py}.
Sylvester's criterion gives $\mathcal N\succ0$, proving strictness.

For singleton $Y$, the new graph is $K_2$, with
$p=t/(1+2t)$, response $1-a$, and deficit $4p^2$. Directly,
\[
 4p^2-2ap(1-a)^2
 =\frac{2t^2(1+4t+6t^2+2t^3)}{(1+2t)^2(1+t)^3}>0.
\]
This treats the singleton without an invalid doubled-reserve premise.
\end{proof}

\subsection{Binary gluing preserves the doubled reserve at the old root}

\begin{theorem}[Binary gluing with a nontrivial attached piece]
\label{cld:thm:binary-gluing-doubled-reserve}
Fix $0<t\le1$ and put $a=t/(1+t)$. Let $(X,o_X)$ and $(Y,o_Y)$
be disjoint nontrivial unweighted rooted trees. Write $q$ for the
root probability of $X$, $d_X$ for its conditional size-mean
difference, and $g_X=\mathbb EK_X-\operatorname{Var}K_X$.
Assume
\[
 g_X\ge2a qd_X^2.
\]
Join $o_X$ to $o_Y$ by one edge, retaining $o_X$ as the root, and
write $p,d',g_{X\ast Y}$ for the new root probability, conditional
response, and deficit. Then
\[
 g_{X\ast Y}>2a p(d')^2.
\]
Only the ordinary previously established reserve is required for
$Y$. This is a preservation theorem conditional on the displayed
hypothesis for $X$; an arbitrary-tree base for that hypothesis is
not asserted in this theorem.
\end{theorem}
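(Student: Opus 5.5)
Follow the template of Theorems~\ref{cld:thm:same-root-leaf-doubled-reserve} and \ref{cld:thm:new-leaf-doubled-reserve}: view $X\ast Y$ as two rooted pieces $(X,o_X)$ and $(Y,o_Y)$ coupled only through their root states by the hard constraint that $o_X,o_Y$ are not both occupied. Let $y$ be the root probability of $Y$, $d_Y$ its response and $g_Y$ its deficit. The new marginals are
\[
 p=\frac{q(1-y)}{1-qy},\qquad v=\frac{y(1-q)}{1-qy},
\]
and the root-state covariance is $-pv$. The assembled-deficit identity after Proposition~\ref{cld:prop:optimal-local-envelope} then gives
\[
 g_{X\ast Y}=[(1-p)g_X^0+p(1+g_X^1)]+[(1-v)g_Y^0+v(1+g_Y^1)]
 -p(1-p)d_X^2-v(1-v)d_Y^2-2pv\,d_Xd_Y .
\]
Root conditioning gives $d'=d_X-v\,d_Y$ exactly, since occupying $o_X$ forces $o_Y$ out, while leaving $o_X$ empty lets $Y$ behave freely.

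\emph{The $X$-piece.} Since $p\le q$, bound it by Proposition~\ref{cld:prop:optimal-local-envelope} with $\kappa=2a$, using the outer convex weight $\eta=2a/(1+2a-q)$ as before. For $\ell(d_X)$, take the deletion floor $g_X^0\ge r(1-d_X)^2$ with $r=q/(1-q)$; this comes from Corollary~\ref{cor:four-center-deletion} together with $a/L\ge$ a multiple of the root odds. As in the same-root leaf case, this contributes $p(1-q)d_X^2+(tp+\eta p)(1-d_X)^2+\eta p$.

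\emph{The $Y$-piece.} Here only the ordinary reserve is available. Because $v\le y$, apply the same proposition with $\kappa=a$ and $\ell$ given by the logarithmic or rational floors of Lemma~\ref{lem:four-root-floor} and Corollary~\ref{cor:four-center-deletion}. The term $y(1-d_Y)^2+\tfrac{t(t-y)}{2(1+2t)(t+y)}$ is a natural first choice, mixing the two branches by an analogous weight $\eta_Y=a/(1+a-y)$. Nontriviality of $Y$ gives $y<a$ strictly, which should supply the slack for strictness; the singleton-$Y$ case is excluded precisely because it is the leaf case already done.

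Subtract $2ap(d_X-vd_Y)^2$. The result is a quadratic form in $(d_X,d_Y,1)$ with coefficients rational in $t$, $q$, $y$ (and possibly an auxiliary $L_Y$ variable if the logarithmic floor is kept). Verify positive definiteness by Sylvester's criterion: the three leading minors, after a substitution of the type $a=x/(2(1+x))$, $q=ay/(1+y)$, and a similar one for $y$, should reduce to polynomials with positive coefficients or positive Bernstein coefficients on a cube. The boundary faces $a=1/2$ and $q=a$ are to be checked separately.

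\textbf{Main obstacle.} The hard part is this final three-variable positivity certificate. The cross term $2pv\,d_Xd_Y$ together with the target's $4apv\,d_Xd_Y$ must be absorbed by diagonal terms in which the $Y$-side reserve has only coefficient $a$, not $2a$. If the plain certificate fails, first optimize the convex weights $\eta,\eta_Y$ as functions of the parameters. Failing that, enlarge the deletion floor for $Y$ by retaining both Cauchy--Schwarz and logarithmic parts with a free mixing parameter, and certify the resulting Bernstein coefficients.
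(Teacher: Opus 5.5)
Your architecture matches the paper's: conditional independence given the two root states, one application of Proposition~\ref{cld:prop:optimal-local-envelope} per piece, a $3\times3$ form in $(d_X,d_Y,1)$, and Sylvester plus Bernstein. However, several exact inputs are wrong, and your concrete choice for the $X$ piece cannot be certified.

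\textbf{The identities.}
\begin{itemize}
\item Given $o_X\notin S$, the piece $Y$ is free, so $\Pr(o_Y\in S\mid o_X\notin S)=y$. Hence $d'=d_X-y\,d_Y$, not $d_X-v\,d_Y$. Compare the leaf case, where $d'=d-a$.
\item The root-state covariance is $-pv$, so $-d^\top Cd$ contributes $+2pv\,d_Xd_Y$, not $-2pv\,d_Xd_Y$.
\item In your $X$ bound ``as in the leaf case'', the $(1-d_X)^2$ coefficient produced by the $\eta$-mixture is $r-(1+r)p+\eta p$. Here $r-(1+r)p=rv=sp$, where $s=y/(1-y)$ is the root odds of $Y$. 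It equals $tp$ only when the attached piece is a leaf. Since $s<t$, your displayed bound is not established.
\item The rational floors of Corollary~\ref{cor:four-center-deletion} are stated in the root odds $s$, not the probability $y$.
\end{itemize}

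\textbf{The certificate fails for your $X$ choice.} With the correct coefficient, the $d_X^2$ entry of your final matrix is $G_{11}=p\,(s+\eta-2a-vq)$. Moreover $2a-\eta=2a(2a-q)/(2a+1-q)\ge 2a^2/(1+a)$. Hence $G_{11}<0$ whenever $s\le 2a^2/(1+a)$. At $t=1$ this is $s\le 1/3$, realized for example by $Y=K_{1,k}$, $k\ge2$, rooted at its center.

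This entry involves no $Y$-side data, so no choice of $\eta_Y$ or of the $Y$ floor can repair it. In the leaf case the loss $p(2a-\eta)$ was paid by $tp$; here it is paid only by $sp$, which can be arbitrarily small.

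\textbf{The missing idea is on the $X$ side.}
\begin{itemize}
\item The paper keeps only the second branch of the envelope for $X$. Its $d_X^2$ coefficient $p(2a+1-q)$ then cancels $p(1-p)+2ap$ up to $p(p-q)=-pvq$, leaving $G_{11}=v(A_X-pq)>0$.
\item It also replaces the floor $r(1-d_X)^2$ by $A_X(1-d_X)^2$, where $A_X=au/(1-u^2)$ and $r=tu^2$. This comes from $g_X^0\ge a(1-d_X)^2/L$ together with $-2\log u\le u^{-1}-u$.
\end{itemize}
The stronger floor matters too. Keep only the second branch, use the floor $r(1-d_X)^2$, and take the paper's $Y$-side bounds. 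The $Y$-side bounds are the average of the Cauchy--Schwarz rational floor and the activity-dependent floor of Corollary~\ref{cor:four-center-deletion}, mixed with weight $a/(a+1-y)$. With these choices the second leading minor is already negative at $t=1$, $r=0.99$, $s=0.03$.

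Your fallback list mentions re-optimizing the weights and enlarging the $Y$ floor, but not strengthening the $X$ floor. More fundamentally, the proposal stops before producing any certificate. The paper supplies it: exact factorizations of the three leading minors with positive denominators, then tensor Bernstein coefficients on dyadic boxes. That certificate is the entire content of this step.
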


\begin{proof}
Let $r,s$ be the separate root odds and put
\[
 q=\frac r{1+r},\qquad w=\frac s{1+s},\qquad
 Z=1+r+s,\qquad p=\frac rZ,\qquad h=\frac sZ.
\]
Nontriviality gives $0<r,s<t$, so write
\[
 r=tu^2,\qquad s=tv^2,\qquad 0<u,v<1.
\]
The joined root-state law has weights $1,r,s$ on $00,10,01$.
Its mean is $(p,h)$ and its covariance is
\[
 C=\begin{pmatrix}p(1-p)&-ph\\-ph&h(1-h)\end{pmatrix}.
\]
The separate response of $Y$ is denoted by $d_Y$.
Root conditioning gives the exact response identity
\begin{equation}\label{cld:eq:binary-joined-response}
 d'=d_X-wd_Y.
\end{equation}
Indeed, conditioning the new old root to be absent leaves the whole
piece $Y$ available, whereas conditioning it to be occupied forces
$o_Y$ absent. The resulting change in the mean size of $Y$ is
$-wd_Y$.

For either separate piece, let $g^0,g^1$ be the deficits after
root deletion and closed-neighborhood deletion, respectively.
Its occupied-root conditional deficit is $1+g^1$. Write
\[
 B_X=(1-p)g_X^0+p(1+g_X^1),\qquad
 B_Y=(1-h)g_Y^0+h(1+g_Y^1).
\]
Conditional independence given the joined root states gives
\begin{equation}\label{cld:eq:binary-exact-mixture}
 g_{X\ast Y}=B_X+B_Y-(d_X,d_Y)C(d_X,d_Y)^\top.
\end{equation}
All uses of Proposition~\ref{cld:prop:optimal-local-envelope} below are
pointwise inequalities at the fixed common activity $t$.

Set
\[
 A_X=\frac{au}{1-u^2},\qquad
 A_Y=\frac12\left(\frac{av}{1-v^2}+s\right),\qquad
 F_Y=\frac{t(5-3t)(1-v^2)}{8(1+2t)(1+v^2)}.
\]
The established Cauchy--Schwarz deletion reserve is
$g^0\ge a(1-d)^2/L$, with $L=\log(t/r)$ or $\log(t/s)$.
For $0<z<1$,
\[
 -2\log z\le z^{-1}-z,
\]
so $1/(-2\log z)\ge z/(1-z^2)$. Consequently
\[
 g_X^0\ge A_X(1-d_X)^2.
\]
For $Y$, average this rational Cauchy--Schwarz bound with the
previously established full rational deletion bound
\[
 g_Y^0\ge s(1-d_Y)^2+
 \frac{t(5-3t)(t-s)}{4(1+2t)(t+s)}.
\]
The result is
\begin{equation}\label{cld:eq:binary-y-deletion}
 g_Y^0\ge A_Y(1-d_Y)^2+F_Y.
\end{equation}
These deletion estimates require only the earlier ordinary
unweighted rooted reserve, not the doubled reserve being preserved.

For $X$, retain the second branch of the conditional-deficit
envelope with $\kappa=2a$. Since $1-p/q=h$, this gives
\[
 B_X\ge hA_X(1-d_X)^2+p(2a+1-q)d_X^2.
\]
For $Y$, the ordinary reserve is $g_Y\ge awd_Y^2$, and
$g_Y^1\ge0$ because the closed-neighborhood-deleted graph is an
unweighted forest. Apply its envelope with $\kappa=a$, and take
weight
\[
 \eta=\frac a{a+1-w}\in(0,1)
\]
on the occupied-root-baseline branch and weight $1-\eta$ on the
other branch. Since $(1-\eta)(a+1-w)=1-w$, the resulting lower
bound, using \eqref{cld:eq:binary-y-deletion}, is
\[
 B_Y\ge W\bigl[A_Y(1-d_Y)^2+F_Y\bigr]
       +h(1-w)d_Y^2+\eta h,
\]
where
\[
 W=1-\frac hw+\eta\frac{h(1-w)}w
   =\frac{r+\eta}{Z}>0.
\]
Substituting these bounds in \eqref{cld:eq:binary-exact-mixture} and
subtracting the target reserve from \eqref{cld:eq:binary-joined-response}
yields
\[
 g_{X\ast Y}-2ap(d_X-wd_Y)^2
 \ge(d_X,d_Y,1)G(d_X,d_Y,1)^\top,
\]
with the symmetric matrix
\begin{equation}\label{cld:eq:binary-gluing-matrix}
G=\begin{pmatrix}
 h(A_X-pq)&p(h+2aw)&-hA_X\\
 p(h+2aw)&WA_Y-phw-2apw^2&-WA_Y\\
 -hA_X&-WA_Y&hA_X+W(A_Y+F_Y)+\eta h
\end{pmatrix}.
\end{equation}
For example, the first diagonal follows from
$p-q=-hq$ and the second from $h-w=-pw$.
Thus no covariance or target-response term has been omitted.

We certify $G\succ0$ throughout $0<t\le1$, $0<u,v<1$.
Let $\Delta_j$ be its leading principal determinant of order $j$.
Put
\[
 Z=1+tu^2+tv^2,\qquad H=1+2t+t^2v^2.
\]
Exact polynomial identities give
\[
 \Delta_1=\frac{t^2uv^2P_1}
 {(1+t)(1+tu^2)(1-u^2)Z^2},
\]
\[
 \Delta_2=\frac{t^4uv^3P_2}
 {2(1+t)^2(1+tu^2)(1+tv^2)^2(1-u^2)(1-v^2)Z^3H},
\]
\[
 \Delta_3=\frac{t^6uv^3P_3}
 {16(1+t)^3(1+tu^2)(1+tv^2)^2(1-u^2)(1-v^2)
       (1+2t)(1+v^2)Z^4H^2}.
\]
Every displayed denominator and monomial factor is strictly
positive on the stated domain. The reduced integer polynomials
$P_1,P_2,P_3$ have respectively
\[
 \begin{array}{c|c|r}
 &\text{multidegree in }(t,u,v)&\text{nonzero power coefficients}\\\hline
 P_1&(2,5,2)&9\\
 P_2&(7,9,11)&208\\
 P_3&(11,11,17)&943
 \end{array}
\]
and are supplied in
\path{binary_sparse_minor_1.json} through
\path{binary_sparse_minor_3.json}.

In the tensor Bernstein basis of these multidegrees, $P_1$ has
54 strictly positive coefficients on the entire closed unit cube,
with minimum coefficient one. For $P_2$ and $P_3$, restrict to the
following dyadic boxes; in every row $t$ ranges over $[0,1]$.
Each local restriction is affinely pulled back to the unit cube.
The exact coefficient minima are
\[
\begin{array}{c|c|c|r|c}
 &u\text{-interval}&v\text{-interval}&\text{positive coefficients}
 &\text{minimum coefficient}\\\hline
 P_2&[0,1/2]&[0,1]&960&1\\
 P_2&[1/2,1]&[0,1/2]&960&416893/9461760\\
 P_2&[1/2,1]&[1/2,1]&960&1554899/1376256\\\hline
 P_3&[0,1/2]&[0,1]&2592&4\\
 P_3&[1/2,3/4]&[0,1/2]&2592&43787104003/9358540800\\
 P_3&[3/4,1]&[0,1/2]&2592&3125/256\\
 P_3&[1/2,1]&[1/2,1]&2592&1725/128
\end{array}
\]
All coefficients in each local tensor array are strictly positive;
there are no omitted zero or negative coefficients. The listed boxes
cover the closed unit cube, with disjoint interiors. Since the tensor
Bernstein basis is nonnegative and sums to one on a closed cube,
these coefficient minima prove strict positivity everywhere,
including $t=1$ and every common subdivision face. It follows that
all three $\Delta_j$ are strictly positive on the required domain,
and Sylvester's criterion gives $G\succ0$.

For reproducibility, \path{binary_sparse_certificate.py}
constructs the reduced rational determinant identities;
\path{convert_binary_bernstein.py} constructs and exactly
inverts the global basis conversion; and
\path{subdivide_binary_certificate.py} constructs the dyadic
restrictions. Their exported arrays and reports record the complete
integer data. The independent script
\path{audit_binary_certificate.py} starts from the conditional
contributions in \eqref{cld:eq:binary-exact-mixture}, clears matrix rows
independently, and reconstructs all three determinants. It then
inverts the Bernstein arrays by finite differences, repeats the
subdivision by midpoint de Casteljau recursion, and verifies every
restricted polynomial against a direct affine substitution of the
original power polynomial. It also checks full cube coverage and
all strict coefficient minima. All these checks use exact rational
or integer arithmetic.

Finally $(d_X,d_Y,1)$ is nonzero, so its quadratic form under $G$
is strictly positive. This proves the claimed strict reserve.
\end{proof}

\Needspace{12\baselineskip}
\subsection{Strong induction and its two-root consequence}
\begin{theorem}[Doubled reserve on every nontrivial rooted tree]
\label{cld:thm:doubled-root-reserve}
Let $T$ be a finite unweighted tree with at least two vertices, choose
any root $o$, and fix $0<t\le1$. Put
\[
 a=\frac{t}{1+t},\quad q_o=\Pr(o\in S),\quad
 d_o=\mathbb E[|S|\mid o\in S]-\mathbb E[|S|\mid o\notin S].
\]
Then
\[
 \mathbb E|S|-\operatorname{Var}|S|>2a q_od_o^2.
\]
\end{theorem}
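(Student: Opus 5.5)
The plan is strong induction on $|V(T)|$, with the assertion made for every choice of root. The three attachment theorems serve as the inductive steps, and the $K_2$ computation in Theorem~\ref{cld:thm:new-leaf-doubled-reserve} serves as the base case. Fix $0<t\le1$ throughout; each preservation theorem is a pointwise statement at this fixed activity, so nothing depends on varying $t$.

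For the base case $|V(T)|=2$, the tree is $K_2$ with either endpoint as root. The singleton clause of Theorem~\ref{cld:thm:new-leaf-doubled-reserve} gives $g=4p^2>2ap(1-a)^2$ directly, and by symmetry this covers both roots.

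For the inductive step, let $|V(T)|\ge3$ with root $o$, and suppose the strict doubled reserve holds for every rooted tree with between $2$ and $|V(T)|-1$ vertices. I would split into cases according to the shape of $T$ near $o$.

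\emph{Case 1: $o$ is a leaf.} Let $o'$ be its unique neighbour and set $Y=T-o$, rooted at $o'$. Then $Y$ is nontrivial because $|V(T)|\ge3$, so the induction hypothesis gives $g_Y>2aq d^2$. Theorem~\ref{cld:thm:new-leaf-doubled-reserve}, applied to $Y$ with the new leaf $z=o$, yields the reserve for $(T,o)$.

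\emph{Case 2: $o$ has a leaf child $\ell$, and $T-\ell$ is nontrivial.} Since $|V(T)|\ge3$, the tree $T-\ell$ has at least two vertices. Root it at $o$ and apply the induction hypothesis to obtain the hypothesis $g\ge 2aqd^2$. Theorem~\ref{cld:thm:same-root-leaf-doubled-reserve} then preserves the reserve when $\ell$ is reattached.

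\emph{Case 3: every child of $o$ is a nontrivial subtree.} Note $\deg o\ge1$. Choose one child $c$ and let $Y$ be the branch at $c$, rooted at $c$; it is nontrivial. Let $X=T-V(Y)$, rooted at $o$.
\begin{itemize}
\item If $X$ is a singleton, then $o$ has only the one child $c$. In that situation I would re-view $T$ as $Y'$ plus the leaf $o$, with $Y'=Y$ rooted at $c$, and use Case~1. This applies because $o$ is then a leaf of $T$.
\item Otherwise $X$ is nontrivial and strictly smaller than $T$, so the induction hypothesis gives the doubled reserve for $(X,o)$. Theorem~\ref{cld:thm:binary-gluing-doubled-reserve} needs only the ordinary reserve for $Y$, which Lemma~\ref{lem:forest-subpoisson} supplies. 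It then gives $g_{X\ast Y}>2ap(d')^2$, and $X\ast Y=T$ with root $o$.
\end{itemize}

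The cases are exhaustive. If $o$ is a leaf, Case~1 applies. Otherwise $o$ has a child. If some child is a leaf, Case~2 applies, because $T-\ell$ still has at least two vertices. If no child is a leaf, Case~3 applies.

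The only real obstacle is bookkeeping. I must check that every premise of the attachment theorems is met: nontriviality of $X$ and $Y$ where it is required, and the retained root in each case. I must also check that the hypothesis is invoked only on strictly smaller nontrivial trees, never on a singleton; the singleton situation is exactly where the doubled reserve fails, since $a^2<2a\cdot a$. The analytic content is entirely contained in the three preservation theorems.
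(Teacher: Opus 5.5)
Your proof is correct and follows essentially the paper's argument: strong induction over all roots, with the $K_2$ base from Theorem~\ref{cld:thm:new-leaf-doubled-reserve} and the three attachment theorems as inductive steps, and you check each theorem's premises correctly (nontriviality, the retained root, and only the ordinary reserve for the glued child). The only difference is bookkeeping: you peel a leaf or a branch off $T$ and apply the induction hypothesis to the remainder, whereas the paper applies it once, to the first child subtree $Y_1$, and then builds $T$ by chaining the preservation theorems, so it needs no case split on the shape of $T$ near $o$.
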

\begin{proof}
Use strong induction on $|V(T)|$, with the root arbitrary. The
$K_2$ base is the direct calculation in
Theorem~\ref{cld:thm:new-leaf-doubled-reserve}.

Delete the chosen root $o$ and let $Y_1,\ldots,Y_k$ be its nonempty
child trees, rooted at their neighbors of $o$; $k\ge1$. Each has
strictly fewer vertices than $T$. Start with the tree $X$ consisting
of $o$, $Y_1$, and their connecting edge. If $Y_1$ is a singleton,
$X=K_2$ supplies the doubled reserve at $o$. If $Y_1$ is nontrivial,
the induction hypothesis supplies its doubled reserve at the child
root, and Theorem~\ref{cld:thm:new-leaf-doubled-reserve} supplies the
strict doubled reserve for $X$ rooted at $o$.

Adjoin the remaining child trees one at a time, always retaining
$o$ as root. For a singleton child, use Theorem~\ref{cld:thm:same-root-leaf-doubled-reserve}. For a nontrivial child, use Theorem~\ref{cld:thm:binary-gluing-doubled-reserve}: the current root piece has the
doubled reserve, and the appended unweighted child has the ordinary
rooted reserve. Every operation preserves the strict doubled reserve
at $o$. The resulting tree is $T$, completing the induction.
\end{proof}

\begin{corollary}[Two-root additive reserve in one tree]
\label{cld:cor:two-root-additive-reserve}
Let $T$ be a finite unweighted tree with at least two vertices, and
let $u,v$ be two distinct vertices. At common activity $0<t\le1$,
write $a=t/(1+t)$ and let $q_w,d_w$ denote the root probability and
conditional size-mean response when $w$ is chosen as root. Then
\[
 \mathbb E|S|-\operatorname{Var}|S|
 >a\bigl(q_ud_u^2+q_vd_v^2\bigr).
\]
\end{corollary}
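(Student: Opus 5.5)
The plan is to deduce the corollary directly from Theorem~\ref{cld:thm:doubled-root-reserve} by averaging, without any new induction. The deficit $g=\mathbb E|S|-\operatorname{Var}|S|$ is a property of the hard-core law on $T$ alone; it does not depend on which vertex is designated as root. The quantities $q_w,d_w$ are defined from the same law, by conditioning on the occupation of $w$. So the doubled rooted reserve may be applied twice to the same number $g$, with two different roots.

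First, root $T$ at $u$. Since $T$ has at least two vertices, Theorem~\ref{cld:thm:doubled-root-reserve} gives
\[
 g>2aq_ud_u^2 .
\]
Second, root $T$ at $v$, which gives $g>2aq_vd_v^2$ in the same way. Averaging the two strict inequalities with weights $\tfrac12,\tfrac12$ yields
\[
 g>a\bigl(q_ud_u^2+q_vd_v^2\bigr),
\]
which is the claim, strictness included. Distinctness of $u,v$ is not even needed for the inequality itself. It is only the natural setting, since for $u=v$ the statement reduces to the theorem.

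I expect no genuine obstacle here. The one point to check is that the theorem is stated with an arbitrary root and asserts a strict inequality for every nontrivial tree. Both hold: its proof inducts with the root arbitrary, and the $K_2$ base is handled directly. The averaging step uses no further structure, so all the difficulty lies in the already-proved attachment lemmas.
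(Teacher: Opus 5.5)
Your proposal is correct and matches the paper's proof: the deficit $g$ does not depend on the root, so you apply Theorem~\ref{cld:thm:doubled-root-reserve} at $u$ and at $v$. Averaging the two strict inequalities $g>2aq_ud_u^2$ and $g>2aq_vd_v^2$ then gives the claim.
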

\begin{proof}
The deficit $g=\mathbb E|S|-\operatorname{Var}|S|$ does not depend
on the choice of root. The doubled-reserve theorem gives
$g>2a q_ud_u^2$ and $g>2a q_vd_v^2$. Adding these inequalities and
dividing by two proves the claim.
\end{proof}

This corollary concerns two roots of a single ordinary unweighted
tree. It does not itself establish endpoint-interaction variance,
unrestricted two-cut BL, or general forest BL.

\paragraph{Exact proof data.}
The source subtree \path{anc/closure_2026_10_26/frontier_closure/}
contains the three certificate generators and complete rational arrays.
The same-root leaf certificate is under \path{policy/}; the general
binary-gluing certificate is under \path{conditional/}. Independent
reconstruction and probability audits are under \path{audit_binary/}
and \path{audit_newleaf/}. Every certificate verifies an identity on
its complete parameter domain. Finite tree scans are supporting
diagnostics, not the basis of the strong-induction theorem.

\section{Limits of heterogeneous variance preservation}\label{sec:heterogeneous-limit}

\begin{proposition}[A weighted star is already super-Poisson]\label{prop:weighted-star-obstruction}
Let $T=K_{1,67}$. Give its center activity $1$ and every leaf activity
$1/23$. If $K$ is the hard-core independent-set size, then
\[
 \mathbb EK-\operatorname{Var}K
 =\frac{67U^2-1206U+576}{576(1+U)^2}<0,
 \qquad U=(24/23)^{67}.
\]
Thus a hard-core law on a tree with all activities at most $1$ need
not be sub-Poisson. In particular, the ordinary weighted-path
compensation theorem cannot be extended to arbitrary tree skeletons
under only the same bounds on activities or directed messages.
\end{proposition}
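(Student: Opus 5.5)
This is a direct computation, so the plan is to condition on the center. Write $\lambda=1/23$ for the leaf activity and $n=67$ for the number of leaves. The independent sets of $T$ fall into two groups. The first is $\{c\}$, the center alone, with weight $1$ and size $1$. The second consists of the subsets of the leaves; in that group the leaves are independent Bernoulli variables with success probability $\lambda/(1+\lambda)=1/24$. The leaf part has total weight $(1+\lambda)^{67}=(24/23)^{67}=U$. Hence $K$ is a two-component mixture:
\[
 K=1 \text{ with probability } \tfrac1{1+U}, \qquad K\sim\mathrm{Bin}\bigl(67,\tfrac1{24}\bigr) \text{ with probability } \tfrac U{1+U}.
\]

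Next I record the moments of each component. The binomial component has mean $\mu_1=67/24$ and variance $v_1=67\cdot\frac1{24}\cdot\frac{23}{24}=1541/576$. The point mass at $1$ has mean $1$ and variance $0$. With $\pi=U/(1+U)$, the law of total variance gives
\[
 \E K=\pi\mu_1+(1-\pi), \qquad \Var K=\pi v_1+\pi(1-\pi)(\mu_1-1)^2,
\]
where $\mu_1-1=43/24$.

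I then form $\E K-\Var K$ over the common denominator $576(1+U)^2$. The numerator is
\[
 576\bigl[(1+U)(U\mu_1+1)\bigr]-576\bigl[(1+U)Uv_1\bigr]-U(43)^2 .
\]
Expanding this is the only step that needs care, since the coefficients must come out exactly as stated. The $U^2$ coefficient is $576(\mu_1-v_1)=1608-1541=67$. The $U^1$ coefficient is $576(\mu_1-v_1+1)-1849=67+576-1849=-1206$. The constant term is $576$. This matches the stated formula $(67U^2-1206U+576)/(576(1+U)^2)$.

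Finally, I show the numerator is negative. The quadratic $67U^2-1206U+576$ has roots approximately $0.49$ and $17.5$, so it suffices to show $1<U<17.5$. Since $1+1/23>1$, we have $U>1$. For the upper bound, $\log U=67\log(1+1/23)\le 67/23<2.92$, so $U<e^{2.92}<e^3<20.1$. That is not quite enough, so I sharpen the logarithm estimate with $\log(1+x)\le x-x^2/2+x^3/3$. This gives $\log U<67(0.043478-0.000945+0.0000274)\approx 2.8516$, hence $U<e^{2.86}<17.5$. To keep the argument fully rigorous, I instead check directly that the quadratic is negative at a certified upper bound $U\le 17.4$, for example via $e^{2.86}<17.47$ from a partial exponential series. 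The main obstacle is this explicit numerical certification of $U<17.5$, which I handle with rational bounds on $\log(24/23)$ and on $e^x$.
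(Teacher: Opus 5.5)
Your computation is correct and follows the paper's route: condition on the center to get a mixture of a point mass at $1$ with $\mathrm{Bin}(67,1/24)$, expand to the stated quadratic, and conclude by convexity from $1<U<35/2$. The paper certifies $U<35/2$ by the exact integer inequality $2\cdot 24^{67}<35\cdot 23^{67}$; your Taylor route also works, but it needs a genuine \emph{upper} bound for the exponential, which partial exponential sums alone cannot give (for instance, from your $\log U<2.852$, $U<e^{3}e^{-0.148}<2.72^{3}\,(1-0.148+0.148^{2}/2)<17.37$).

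You should also add the paper's check, absent from your proposal, that every directed cavity occupation probability is at most $1/2$ (leaf-to-center $1/24$, center-to-leaf $[1+(24/23)^{66}]^{-1}$). That check is what supports the final clause about bounds on directed messages.
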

\begin{proof}
More generally take a star with $k$ leaves, center activity $1$, and
common leaf activity $r>0$. Put
\[
 U=(1+r)^k,\qquad a=\frac r{1+r},\qquad
 q=\frac1{1+U},\qquad s=1-q.
\]
The center is occupied with probability $q$. Given that it is
occupied the size is $1$, whereas given that it is absent the size
has the binomial law $\operatorname{Bin}(k,a)$. The conditional
mean and variance formulas therefore give
\begin{align*}
 \mathbb EK&=q+ska,\\
 \operatorname{Var}K&=ska(1-a)+qs(1-ka)^2,\\
 \mathbb EK-\operatorname{Var}K
 &=\frac{1+Uk\{a^2(U+1-k)+2a\}}{(1+U)^2}.
\end{align*}
For $k=67,r=1/23$, this is the displayed expression.
The integer inequality
\[
 2\cdot24^{67}<35\cdot23^{67}
\]
proves $1<U<35/2$. The convex polynomial
$f(u)=67u^2-1206u+576$ satisfies
$f(1)=-563$ and $f(35/2)=-41/4$; it is therefore strictly negative
throughout this interval. The supplied verifier checks the integer
inequality and the moments using exact rational arithmetic.

For completeness, let $B$ be the occupancy vector, $p=\mathbb EB$,
$C=\operatorname{Cov}B$, and set
$\alpha_i=r_i(1-p_i)-p_i$. The proposed tree compensation form is
\[
 Q(d,h)=d^\top(\operatorname{diag}p-C)d+
              \sum_i\alpha_i(h-d_i)^2.
\]
At $d=\mathbf1,h=1$ it equals
$\mathbb EK-\operatorname{Var}K<0$. Thus the failure is an actual
realizable conditional-law obstruction, not merely a negative
direction in an enlarged parameter relaxation. All directed
cavity occupation probabilities are at most $1/2$: a leaf-to-center
message is $1/24$, and a center-to-leaf message is
$[1+(24/23)^{66}]^{-1}<1/2$.
\end{proof}

\begin{proposition}[An infinite family with nonvanishing deficit]\label{prop:weighted-star-family}
For every integer $m\ge25$, give the center of $K_{1,3m}$ activity
$1$ and its leaves activity $1/m$. Its size law satisfies
$\operatorname{Var}K>\mathbb EK$. Moreover
\[
 \lim_{m\to\infty}(\mathbb EK-\operatorname{Var}K)
 =\frac{1-3e^3}{(1+e^3)^2}<0.
\]
\end{proposition}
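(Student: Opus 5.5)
Specialize the exact star formula from the proof of Proposition~\ref{prop:weighted-star-obstruction}. With $k$ leaves, center activity $1$, leaf activity $r$, $U=(1+r)^k$ and $a=r/(1+r)$, that proof gives
\[
 \mathbb EK-\operatorname{Var}K=\frac{1+Uk\{a^2(U+1-k)+2a\}}{(1+U)^2}.
\]
Put $k=3m$ and $r=1/m$. Then $a=1/(m+1)$, $ka=3m/(m+1)$, and $ka^2=3m/(m+1)^2$. The numerator becomes
\[
 N_m=1+U\Bigl[\frac{3m}{(m+1)^2}(U+1-3m)+\frac{6m}{m+1}\Bigr].
\]
Both claims follow from studying $N_m$ together with $U_m=(1+1/m)^{3m}$.

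\textbf{Limit.} Since $U_m\to e^3$, we have $ka^2U\to0$ and $ka^2(1-3m)\to-9$. Also $2ka\to6$. Hence the bracket tends to $-3$, and $N_m\to1-3e^3$. Dividing by $(1+U_m)^2\to(1+e^3)^2$ gives the stated limit, which is negative.

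\textbf{Sign for every $m\ge25$.} Write the bracket as
\[
 \frac{3m}{(m+1)^2}\bigl[U+1-3m+2(m+1)\bigr]=\frac{3m(U+3-m)}{(m+1)^2}.
\]
So $N_m<0$ is equivalent to
\[
 3mU(m-3-U)>(m+1)^2.
\]
The sequence $U_m$ increases to $e^3<21$. For $m\ge25$ this gives $m-3-U>1$, so $m-3-U\ge m-24$. Also $U\ge U_1=8$. It then suffices to show $24m(m-24)>(m+1)^2$. This inequality holds at $m=25$ but fails there only if $600<676$, which it is. So that crude bound is too weak near $m=25$, and the estimates need sharpening.

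Instead use $U_{25}=(26/25)^{75}$, which is about $18.95$, and monotonicity: $U\ge U_{25}>18$ and $U<e^3<20.1$. The target then reduces to
\[
 54m(m-23.1)>(m+1)^2 \quad\text{for } m\ge25.
\]
At $m=25$ the left side is about $2565$ and the right side is $676$. For larger $m$ the left side grows faster, since $53m^2$ dominates, so the inequality holds throughout.

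\textbf{Main obstacle.} The only delicate step is certifying these numerical bounds rigorously: $U_{25}>18$ via a series or binomial lower bound, and $e^3<20.1$. The monotone increase of $(1+1/m)^{3m}$ is classical. The bound $e^3<20.1$ follows from $e<2.7183$.
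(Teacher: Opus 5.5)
Your argument is correct and follows the paper's route: you specialize the star formula, rewrite the numerator as $1+\frac{3mU}{(m+1)^2}(U+3-m)$, take the limit via $U\to e^3$, and get the sign by bounding $U$ between constants. The one step you leave uncertified, $U_{25}>18$, is unnecessary. The paper instead uses $U>10$, from $(1+1/m)^m\ge 2+\frac{m-1}{2m}\ge\frac94$, together with $U<e^3<21$; this reduces to $29m^2-722m-1>0$, which equals $74$ at $m=25$. Even Bernoulli's $U\ge 8$ suffices with your own bound $e^3<20.1$, since $24m(m-23.1)>(m+1)^2$ already holds at $m=25$ ($1140>676$) and for all larger $m$.
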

\begin{proof}
With $U=(1+1/m)^{3m}$, the preceding formula becomes
\[
 g_m=\frac{1+\dfrac{3mU}{(m+1)^2}(U+3-m)}{(1+U)^2}.
\]
For $m\ge2$, the first three binomial terms give
$(1+1/m)^m\ge2+(m-1)/(2m)\ge9/4$, so $U>10$.
Also $U<e^3<21$. Hence for $m\ge25$ the numerator of $g_m$ is
strictly less than
\[
 1-\frac{30m(m-24)}{(m+1)^2}<0.
\]
The last inequality is equivalent to
$29m^2-722m-1>0$; at $m=25$ it equals $74$, and it increases
thereafter. The limit follows from $U\to e^3$ in the exact formula.
\end{proof}

\paragraph{Scope.}
The activities are unequal. These examples do not contradict the
proved sub-Poisson theorem for an unweighted forest at one common
activity $0<t\le1$. Nor do they falsify ordinary-path compensation,
forest barrier log-concavity, or the log-concavity of an unweighted
independence polynomial. They show that an arbitrary-tree assembly
argument must retain additional compatibility or reserve information;
site activity and message bounds by themselves are insufficient.

\section{Further questions}
The compatibility condition in Theorem~\ref{nuf:thm:main} is sufficient,
and its optimal activity ratio remains undetermined. The weighted-star
examples show that a universal heterogeneous theorem needs additional
control, but they do not locate the sharp boundary of the compatible
region. The common-activity theorem applies throughout $(0,1]$;
the present proof does not claim the same rooted reserve at arbitrary
activity.

For coefficients, the factorial normalization in
Theorem~\ref{thm:allorder-wc} is uniform at every index, whereas the
binomial ULC prefix is exactly three indices. Determining optimal
finite-size curvature margins remains a separate question. The
weighted-corona theorem permits arbitrary branching under compatible
base marks; unrestricted site marks are excluded by the variance
counterexamples above.

All statements in this article have complete analytic or algebraic
proofs. The accompanying checks reproduce identities and boundary
constants; finite tests are not substitutes for the quantified proofs.

\section*{Use of AI tools}
ChatGPT was used for mathematical exploration and proof development,
code preparation and checking, literature comparison, and language
editing. The author is responsible for the mathematical content,
attribution, and computational materials. This statement does not
assert formal verification.

\end{document}